\newtheorem{lemma}{Lemma}
\newtheorem{theorem}{Theorem}
\newtheorem{corollary}{Corollary}
\theoremstyle{definition}
\newtheorem*{conditions}{Basic conditions}
\theoremstyle{definition}
\newtheorem*{condition}{An additional condition}
\theoremstyle{remark}
\newtheorem{remark}{Remark}
\titleformat{\subsection}[runin]{\bfseries}{\thesubsection. }{0pt}{}[]
\titleformat{\subsubsection}[runin]{\bfseries}{\thesubsubsection. }{0pt}{}[]
\newcommand{\ignore}[1]{}
\newcommand{\jqw}[1]{#1}
\newcommand{\tp}{\textnormal{\tiny T}} 
\newcommand{\ct}{\textnormal{\tiny H}} 
\newcommand{\opnorm}{\@ifstar\@opnorms\@opnorm}
\newcommand{\@opnorms}[1]{%
  \left|\mkern-1.5mu\left|\mkern-1.5mu\left|
   #1
  \right|\mkern-1.5mu\right|\mkern-1.5mu\right|
}
\newcommand{\@opnorm}[2][]{%
  \mathopen{#1|\mkern-1.5mu#1|\mkern-1.5mu#1|}
  #2
  \mathclose{#1|\mkern-1.5mu#1|\mkern-1.5mu#1|}
}
\title{Fast randomized iteration: diffusion Monte Carlo through the lens of numerical linear algebra}
\author[1]{Lek-Heng Lim}
\author[1,2]{Jonathan Weare}
\affil[1]{Department of Statistics, University of Chicago}
\affil[2]{James Franck Institute, University of Chicago}
\date{}
\begin{document}

\maketitle

\begin{abstract} 

\jqw{We review the basic outline of the highly successful diffusion Monte Carlo technique commonly used in contexts ranging from electronic structure calculations to rare event simulation and data assimilation, and propose a new class of randomized iterative algorithms based on similar principles to address  a variety of common tasks in numerical linear algebra.  From the point of view of numerical linear algebra, the main novelty of the Fast Randomized Iteration schemes described in this article is  that they work in either linear or constant  cost per iteration (and in total, under appropriate conditions) and are rather versatile: we will show how they apply to
solution of linear systems,  eigenvalue problems, and matrix exponentiation, in dimensions far beyond the present limits of numerical linear algebra. 
While 
traditional iterative methods in numerical linear algebra were created in part to deal with instances where a matrix (of size $\mathcal{O}(n^2)$) is too big to store, the algorithms that we propose are effective even in instances where the solution vector itself (of size $\mathcal{O}(n)$) may be too big to store or manipulate. In fact, our work is motivated by recent DMC based quantum Monte Carlo schemes that have been applied to matrices as large as $10^{108} \times 10^{108}$.
We provide basic convergence results, discuss the dependence of these results on the dimension of the system, and demonstrate dramatic cost savings on a range of test problems.}
\end{abstract}


\section{Introduction}\label{sec:intro}

 Numerical linear algebra has been the cornerstone of scientific computing from its earliest days and randomized approaches to solving problems in linear algebra   have a history almost as long as numerical linear algebra itself (see e.g.\ \cite{AlexandrovLakka:1996:RandInv,DimovDimov:1998:RNLAinv, 
DimovKaraivanova:1998:RNLA,Forsythe:1950:rnla,Halton:1962:smc,Halton:1970:mc,Halton:1994:rnla,Hammersley:1960:mc,HammersleyHandscomb:1964:mc,Wasow:1952:rnla}).\footnote{As pointed out in \cite{GoodmanMadras:1995:RandomWalksandLA} many classical iterative techniques in numerical linear algebra are intimately related to Markov chain Monte Carlo (MCMC) schemes.}  As the size of matrices encountered in typical applications has increased (e.g.\ as we sought greater and greater accuracy in numerical solution of partial differential equations), so has the attention paid to the performance of linear algebra routines on very large matrices both in terms of memory usage and operations count.  Today, in applications ranging from numerical solution of partial differential equations (PDE) to data analysis, we are frequently  faced with the need to solve linear algebraic problems at and beyond the boundary of applicability of classical techniques.  In response,
randomized numerical linear algebra algorithms are receiving renewed attention and, over the last decade, have become an immensely popular subject of study  within the applied mathematics and computer science communities (see e.g.\ \cite{CoakleyRokhlin:2011:OrthogProj,DrineasKannan:2006:MatMult, DrineasKannan:2006:LowRank, DrineasKannan:2006:CompMat, ErikssonBiqueSolbrig:2011:ISforMatMult, Frieze:2004:FMA:1039488.1039494,  LibertyWoolfe:2007:LowRank,MartinssonRokhlin:2011:MatDecomp, RokhlinSzlam:2009:PCA, RokhlinTygert:2008:LeastSquares, Strohmer2008, WoolfeLiberty:2008:MatApprox}).  


The goal of this article is to, after providing a brief introduction to the highly successful \textit{diffusion Monte Carlo} (DMC) algorithm,   suggest a new class of algorithms inspired by DMC for problems in numerical linear algebra.  DMC is used in applications including electronic structure calculations, rare event simulation, and data assimilation, to efficiently approximate 
expectations of the type appearing in Feynman--Kac formulae, i.e., for weighted expectations of Markov processes typically associated with parabolic partial differential equations (see e.g.\ \cite{DelMoral:2004:FK}).  While based on principles underlying DMC, the Fast Randomized Iteration (FRI) schemes that we study in this article are designed to address arguably the most classical and common tasks in matrix computations:
linear systems, eigenvector problems, and matrix exponentiation, i.e., solving for $v$ in
\begin{equation}\label{eq:list}
Av = b, \qquad  Av = \lambda v, \qquad v = \exp(A)b
\end{equation}
for matrices $A$ that might not have any natural association with a Markov process.

FRI schemes rely on basic principles similar to those at the core of other randomized methods that have appeared recently in the numerical linear algebra literature but they differ substantially in detail and in the problems they address. These differences will be remarked on again later, but roughly, while many recent randomized linear algebra techniques  rely on a single sampling step, FRI methods randomize repeatedly and, as a consequence, are more sensitive to errors in the constructed randomizations.
FRI schemes are not, however, the first to employ randomization within iterative schemes (see in particular  \cite{AlexandrovLakka:1996:RandInv} and \cite{Halton:1962:smc}).  In fact the strategy of replacing expensive integrals or sums (without immediate stochastic interpretation) appearing in iterative protocols has a long history in a diverse array of fields.  
For example, it was used in schemes for the numerical solution of hyperbolic systems of partial differential equations in \cite{Chorin:1976:Glimm}.  That strategy is represented today in applications ranging from density functional calculations in physics and chemistry (see e.g.\ \cite{BaerNeuhauserRabani:2013:SDFT}) to maximum likelihood estimation in statistics and machine learning (see e.g.\ \cite{Bottou:2004:SGD}).   Though related in that they rely on repeated randomization within an iterative procedure, these schemes differ from the methods we consider in that they do not use a stochastic representation of the solution vector itself.  
In contrast to these and other randomized methods that have been used in linear algebra applications, our focus is on problems for which the solution vector is extremely large so they can only be treated by linear or constant cost algorithms. In fact, the  scheme that is our primary focus is ideally suited to problems so large that the solution vector itself is too large to store so that no traditional iterative method (even for sparse matrices) is appropriate.  This is possible because our scheme computes only low dimensional projections of the solution vector and not the solution vector itself.  The full solution is replaced by a sequence of sparse random vectors whose expectations are  close to the true solution and whose variances are  small.

Diffusion Monte Carlo (see e.g.\ \cite{Anderson1975,BoothThom:2009:CIQMC,CeperleyAlder1980, 
FoulkesMitas:2001:QMC, GrimmStorer1971,HairerWeare:2014:TDMC, Kalos1962,KolorencMitas:2011:QMC,LopezMa:2006:QMC})     is a central component in the quantum Monte Carlo (QMC) approach to computing the electronic ground state  energy  of the Schr\"odinger--Hamiltonian operator\footnote{The symbol $\Delta$ is used here and below to denote the usual Laplacian operator on functions of  $\mathbb{R}^d,$ $\Delta u = \sum_{i=1}^d \partial^2_{x_i} u.$  $U$ is a potential function that acts on $v$ by pointwise multiplication.  Though this operator is symmetric, the FRI schemes we introduce below are not restricted to symmetric eigenproblems.}
\begin{equation}\label{H}
\mathcal{H} v = -\frac{1}{2}\Delta v + U\,v.
\end{equation}
We are motivated in particular by the work in \cite{BoothThom:2009:CIQMC} in which the authors apply a version of the  DMC procedure to a finite (but huge) dimensional projection of $\mathcal{H}$ onto a discrete basis respecting an anti-symmetry property of the desired eigenfunction.  The approach in \cite{BoothThom:2009:CIQMC} and subsequent papers have yielded remarkable results in situations where the projected Hamiltonian is an  extremely large matrix (e.g.\ $10^{108}\times 10^{108}$, see \cite{ShepherdBooth:2012:FCIQMC6}) and standard approaches to finite dimensional eigenproblems are far from reasonable (see \cite{BoothAlavi:2010:FCIQMC2,BoothCleland:2011:FCIQMC5,BoothGruneis:2013:FCIQMC7,ClelandBooth:2010:FCIQMC3, ClelandBooth:2011:FCIQMC4,ShepherdBooth:2012:FCIQMC6}).

The basic DMC approach is also at the core of schemes developed for a number of applications beyond electronic structure.  In fact, early incarnations of DMC were used  in the simulation of small probability events \cite{HammersleyMorton:1954:SIS, RosenbluthRosenbluth:1955:SIS} for statistical physics models.   Also in statistical physics, the transfer matrix Monte Carlo (TMMC) method was developed to compute the partition functions of certain lattice models by exploiting the observation that the partition function can be represented as the dominant eigenvalue of the so-called transfer matrix, a real, positive, and sparse matrix (see \cite{NightingaleBlote:1986:TMMC}).  TMMC may be regarded as an application of DMC to discrete eigenproblems.
DMC has also become a popular method for many data assimilation problems and the notion of a ``compression'' operation introduced below is very closely related to the ``resampling'' strategies developed for those problems (see e.g.\ \cite{GordonSalmondSmith:1993:IEEProcF,Kitagawa:1996:JCompGraphStat,deFreitasDoucetGordon:2005:book}).


One can view the basic DMC (or MCMC for that matter) procedure as a combination of two steps:  In one step an integral operator (a Green's function) is applied to an approximate solution consisting of a weighted finite sum of delta functions, in another step the resulting function (which is no longer a finite mixture of delta functions) is again approximated by a finite mixture of delta functions.  The more delta functions allowed in the mixture, the higher the accuracy and cost of the method.  
A key to understanding the success of these methods is the observation that not \emph{all} delta functions (i.e., at all positions in space) need appear in the mixture.   A similar observation holds for the methods we introduce:
{FRI schemes need not access all entries in the matrix of interest to yield an accurate solution}.
In fact, we prove that the cost to achieve a fixed level of accuracy with our methods can be bounded independently of the size of the matrix, though in many applications one should expect some dependence on dimension.   As with other randomized schemes, when an effective deterministic method is available it will very likely outperform the methods we propose; our focus is on problems for which satisfactory deterministic alternatives are not available (e.g.\ when the size of the intermediate iterates or final result are so large as to prohibit any conceivable deterministic methods). Moreover, the schemes that we propose are a supplement and not a replacement  for traditional dimensional reduction strategies (e.g.\ intelligent choice of basis).  Indeed, successful application of DMC within practical QMC applications relies heavily on a change of variables based on approximations extracted by other methods (see the discussion of importance sampling in \cite{FoulkesMitas:2001:QMC}). 


The theoretical results that we provide are somewhat atypical of results commonly presented in the numerical linear algebra literature.  In the context of linear algebra applications, both DMC and FRI schemes are most naturally viewed as randomizations of standard iterative procedures and their performance is largely determined by the structure of the particular deterministic iteration being randomized.  For this reason, as well as to avoid obscuring the essential issues with the details of individual cases,  we choose to frame our results in terms of 
the difference between the iterates $v_t$ produced by a general iterative scheme and the iterates generated by the corresponding randomized scheme $V_t$ (rather than considering the difference between $V_t$ and $\lim_{t\rightarrow \infty} v_t$).   In ideal situations \jqw{(see Corollary \ref{nonneg})} our bounds are of the form
\begin{equation}\label{opnormintro}
\opnorm{ V_t - v_t} := \sup_{f\in\mathbb{C}^n,\; \lVert f\rVert_\infty\leq 1}
\sqrt{\mathbf{E}\left[ \lvert f \cdot V_t - f\cdot v_t\rvert^2\right]} \leq \frac{C}{\sqrt{m}}
\end{equation}
where the (in general $t$-dependent) constant $C$ is independent of the dimension $n$ of the problem and $m\leq n$ controls the cost per iteration of the randomized scheme (one iteration of the randomized scheme is roughly a factor of $n/m$ less costly than its deterministic counterpart and the two schemes are identical when $m=n$).  The norm in \eqref{opnormintro} measures the root mean squared deviation in low dimensional projections of the iterates.  This choice  is important as described in more detail in Sections \ref{sec:gen} and \ref{sec:conv}.  For more general applications, one can expect the constant $C,$ which incorporates the stability properties of the randomized iteration, to depend on dimension.  In the worst case scenario, the randomized scheme is no more efficient than its deterministic counterpart (in other words, reasonable performance may require $m\sim n$).  Our numerical simulations, in which $n/m$ ranges roughly between $10^{7}$ and  $10^{14},$ strongly suggest that this scenario may be rare.

We will  begin our development in Section~\ref{sec:DMC} with a description of the basic diffusion Monte Carlo procedure.  Next, in Section~\ref{sec:gen} we describe how ideas borrowed from DMC  can be applied to general  iterative procedures in numerical linear algebra.  
As a prototypical example, we describe how randomization can be used to dramatically decrease the cost of finding the dominant eigenvalue and (projections of) the dominant eigenvector.  Also in that section, we consider the specific case in which the iteration mapping is an $\varepsilon$-perturbation of the identity, relevant to a wide range of applications involving evolutionary differential equations.  In this case a poorly chosen randomization scheme can result in an unstable algorithm while a well chosen randomization can  result in an error that decreases with $\varepsilon$ (over $\varepsilon^{-1}$ iterations).    Next, in Sections~\ref{sec:conv} and \ref{sec:rr}, we establish several simple bounds regarding the stability and  error of our schemes.  
Finally, in Section~\ref{sec:ex} we provide three computational examples to demonstrate the performance of our approach.  A simple, educational implementation of Fast Randomized Iteration applied to our first computational example is available online  (see \cite{FRIforIsingCode}).

\begin{remark}
In several places we have included remarks that clarify or emphasize concepts  that may otherwise be unclear  to readers more familiar with classical, deterministic, numerical linear algebra methods.   We anticipate that some of these remarks will be useful to this article's broader audience as well.
\end{remark}

\section{Diffusion Monte Carlo within quantum Monte Carlo}\label{sec:DMC}

The ground state energy, $\lambda_*,$ of a quantum mechanical system governed by the Hamiltonian in \eqref{H} is the smallest eigenvalue (with corresponding eigenfunction $v_*$) of the Hermitian operator $\mathcal{H}.$  The starting point for a DMC calculation  is the imaginary-time Schr\"odinger equation\footnote{The reader familiar with quantum mechanics but unfamiliar with QMC may wonder why we begin with the imaginary time Schr\"odinger equation and not the usual Schr\"odinger equation $i \partial_t v = -  \mathcal{H}v$.  The reason is that while   the solutions to both equations can be expanded in terms of the eigenfunctions of $\mathcal{H},$ for the usual Schr\"odinger equation the contributions to the solution from eigenfunctions with larger eigenvalues do not decay relative to the ground state.  By approximating the solution to the imaginary time equation for large times we can approximate the ground state eigenfunction of $\mathcal{H}.$}\footnote{ In practical QMC applications one solves for $\rho = v_* \tilde v$ where $\tilde v$ is an approximate solution found in advance by other methods.  The new function $\rho$ is the ground state eigenfunction of a Hamiltonian of the form $\tilde {\mathcal{H}}v = -\frac{1}{2}\Delta v + \text{div} (b v) + \tilde U v.$  The implications for the discussion in this section are minor. }
\begin{equation}\label{iSE}
\partial_t v = - \mathcal{H} v
\end{equation}
(for a review of QMC see \cite{FoulkesMitas:2001:QMC}).
One can, in principle, use power iteration to find $\lambda_*$:  beginning from an initial guess $v_0$ (and assuming a gap between $\lambda_*$ and the rest of the spectrum of $\mathcal{H}$), the iteration
\begin{equation}\label{powerforH}
\lambda_t =  - \frac{1}{\varepsilon} \log \int  e^{-\varepsilon \mathcal{H}} v_{t-1} (x) \, dx
\quad
\text{and}
\quad
v_t = \frac{e^{-\varepsilon \mathcal{H}} v_{t-1}}{\int e^{-\varepsilon \mathcal{H}} v_{t-1}(x)\, dx}
\end{equation}
will converge to the pair $(\lambda_*, v_*)$ where $v_*$ is the eigenfunction corresponding to $\lambda_*.$ 
Here the integral is over $x\in \mathbb{R}^d.$

\begin{remark}
For readers who are more familiar with the power method in numerical linear algebra, this may seem a bit odd but a discrete analogue of \eqref{powerforH} is just $v_{t} = A v_{t-1}/\lVert A v_{t-1} \rVert_1$ applied to a positive definite matrix $A = \exp(-\varepsilon H) \in \mathbb{C}^{n \times n}$ where $H$ is Hermitian.\footnote{Note that the matrix  $H$ is not the same as the operator $\mathcal{H}$ on a function space,  and is only introduced for the purposes of relating the expression in \eqref{powerforH} to the usual power method for matrices} The slight departure from the usual power method is only in (i) normalizing by a $1$-norm (or rather, by the sum of entries $\mathbbm{1}^\tp A v_{t-1}$ since both $v$ and $A$ are non-negative), and  (ii) iterating on  $\exp(-\varepsilon H) $ instead of on $H$ directly (the goal in this context is to find the smallest eigenvalue of $H$ not the magnitude-dominant eigenvalue of $H$). The iteration on the eigenvalue is then $\lambda_t = -\epsilon^{-1} \log \lVert A v_{t-1} \rVert_1$ since $\lambda_*(H) = -\epsilon^{-1} \log \lambda_*(A)$. 
\end{remark}

The first step in any (deterministic or stochastic) practical implementation of \eqref{powerforH} is discretization of the operator $e^{-\varepsilon \mathcal{H}}.$  Diffusion Monte Carlo often uses the second order time discretization
\[
e^{-\varepsilon \mathcal{H}} \approx K_\varepsilon = e^{-\frac{\varepsilon}{2} U} e^{\frac{\varepsilon}{2} \Delta }
e^{-\frac{\varepsilon}{2} U}.
\]
A standard deterministic approach would then proceed by discretizing the operator $K_\varepsilon$ in space and replacing $e^{-\varepsilon \mathcal{H}}$ in \eqref{powerforH} with the space and time discretized approximate operator.  The number of spatial discretization points required by such a scheme to achieve a fixed accuracy will, in general, grow exponentially in the dimension $d$ of $x.$

Diffusion Monte Carlo uses two randomization steps to avoid this explosion in cost as $d$ increases.  These randomizations have the effect of ensuring that the random approximations $V_t^m$ of the iterates $v_t$ are always of the form
\[
V_t^m(x) =  \sum\nolimits_{j=1}^{N_t}  W_t^{(j)} \delta_{X_t^{(j)}}(x)
\]
where $\delta_y(x)$ is the Dirac delta function centered at $y\in \mathbb{R}^d,$  the $W_t^{(j)}$ are real, non-negative numbers with $\mathbf{E}\bigl[\sum_{j=1}^{N_t} W_t^{(j)}\bigr]=1,$ and, for each $j\leq N_t,$ $X_t^{(j)}\in \mathbb{R}^d.$   As will be made clear in a moment, the integer $m$ superscripted  in our notation controls  the number of delta functions, $N_t,$ included in the above expression for $V_t^m.$ 

 The fact that the function $V_t^m$ is non-zero at only $N_t$ values is crucial to the efficiency of diffusion Monte Carlo.  Starting with $N_0 =m$ and from an initial condition of the form
\[
V_0^m  = \frac{1}{m} \sum\nolimits_{j=1}^{m} \delta_{X_0^{(j)}}
\]
 the first factor of $e^{-\frac{\varepsilon}{2} U}$ applied to $V_0^m$ results in
\[
\frac{1}{m} \sum\nolimits_{j=1}^{m} e^{-\frac{\varepsilon}{2} U(X_0^{(j)})}\delta_{X_0^{(j)}}
\]
which can be assembled in $\mathcal{O}(m)$ operations.  The first of the randomization steps used in DMC relies on the well known relationship
\begin{equation}\label{BMheat}
\int f(x) [e^{\frac{\varepsilon}{2} \Delta} \delta_y](x) \, dx = \mathbf{E}_{y} \left[ f(B_\varepsilon)\right]
\end{equation}
where $f$ is a test function, $B_s$ is a standard Brownian motion evaluated at time $s\geq 0,$ and the subscript on the expectation is meant to indicate that $B_0=y$ (i.e., in the expectation in \eqref{BMheat} $B_\varepsilon$ is a Gaussian random variable with mean $y$ and variance $\varepsilon$). In fact, this representation is a special case of the Feynman--Kac formula 
$
  \int f(x) [e^{-\varepsilon \mathcal{H}} \delta_y ](x) \, dx = 
  \mathbf{E}_{y} \bigl[ f(B_\varepsilon)e^{-\int_0^\varepsilon U(B_s) \,ds} \bigr]
$.
Representation \eqref{BMheat} suggests the approximation 
\[
K_\varepsilon V_0^m \approx \tilde V_{1}^m = \frac{1}{m} \sum\nolimits_{j=1}^{m} e^{-\frac{\varepsilon}{2}\bigl(U(\xi_{1}^{(j)})+  U(X_0^{(j)})\bigr)}\delta_{\xi_{1}^{(j)}}
\]
where, conditioned on the $X_0^{(j)},$ the $\xi_{1}^{(j)}$ are independent and $\xi_{1}^{(j)}$ is normally distributed with  mean $X_0^{(j)}$ and covariance $\varepsilon I$   (here $I$ is the $d\times d$ identity matrix).  This first randomization has allowed an approximation of $K_\varepsilon V_0^m$ by a distribution, $\tilde V_{1}^m,$  that is again supported on only $m$ points in $\mathbb{R}^d.$   One might, therefore, attempt to define a sequence   $V_t^m$ iteratively by the recursion
\[
V_{t+1}^m = \frac{ \tilde V_{t+1}^m }{ \int  \tilde V_{t+1}^m(x) \, dx}
=  \sum\nolimits_{j=1}^m W^{(j)}_{t+1}\, \delta_{\xi_{t+1}^{(j)}}
\]
where we have recursively defined the weights
\[
W_{t+1}^{(j)} = \frac{ e^{-\frac{\varepsilon}{2}\bigl(U(\xi_{t+1}^{(j)})+  U(X_t^{(j)})\bigr)}W_t^{(j)}}
{ \sum\nolimits_{\ell =1}^m e^{-\frac{\varepsilon}{2}\bigl(U(\xi_{t+1}^{(\ell)})+  U(X_t^{(\ell)})\bigr)}W_t^{(\ell)} }
\]
with $W_0^{(j)} = 1/m$ for each $j.$
The cost of this randomization procedure is  $\mathcal{O}(dm)$ so that the total cost of a single iteration is $\mathcal{O}(dm).$  

At each step the weights in the expression for the iterates $V_t^m$ are multiplied by additional random factors.  These factors are determined by the potential $U$ and the positions of the $\xi^{(j)}_t.$  On the other hand, 
the $\xi_t^{(j)},$ evolve without reference to the potential function $U$ (they are discretely sampled points from $m$ independent Brownian motions).  As a consequence, over many iterations one can expect extreme relative variations in the $W_t^{(j)}$ and, therefore, poor accuracy in $V_t^m$ as an approximation of the functions $v_t$ produced by \eqref{powerforH}.

The second randomization employed by the DMC algorithm is the key to controlling the growth in variance and generalizations of the idea will be key to designing fast randomized iteration schemes in the next section.
In order to control the variation in weights, at step $t,$ DMC randomly removes points $\xi^{(j)}_t$ corresponding to small weights $W^{(j)}_t$ and duplicates points corresponding to large weights.   The resulting number of points stored at each iteration, $N_t,$ is close to, or exactly, $m.$
At step $t,$ a new distribution $Y_t^m$ is generated from $V_t^m$ so that
\[
\mathbf{E}\left[ Y_t^m \mid V_t^m\right] = V_t^m
\]
by ``resampling'' a new collection of $N_{t+1}$ points from the $N_t$ points $\xi_t^{(j)}$ with associated probabilities $W_t^{(j)}.$   
The resulting points are labeled $ X_t^{(j)}$ and the new distribution $Y_t^m$ takes the form
\[
Y_t^m   = \frac{1}{N_{t}} \sum\nolimits_{j=1}^{N_{t+1}}   \delta_{X^{(j)}_t}.
\]
The next iterate $V_{t+1}^m$ is then built exactly as before but with $V_t^m$ replaced by $Y_t^m.$
All methods to select the $X_t^{(j)}$ generate, for each $j,$ a non-negative integer $N^{(j)}_t$ with 
\[
\mathbf{E}\bigl[ N^{(j)}_t \bigm| \bigl\{W_t^{(\ell)}\bigr\}_{\ell=1}^m\bigr] = m W_t^{(j)}
\]
and then sets $N^{(j)}_t$ of the elements in the collection $\{ X_t^{(j)}\}_{j=1}^{N_{t+1}}$
equal to $\xi^{(j)}_t$ so that $N_{t+1} = \sum_{j=1}^{N_t} N^{(j)}_t.$  
For example, one popular strategy in DMC generates the $N^{(j)}_t$ independently with 
\begin{equation}\label{Ndef}
\begin{aligned}
\mathbf{P}\bigl[ N^{(j)}_t&=  \bigl\lfloor m W_t^{(j)} \bigr\rfloor \bigr] =\bigl\lceil m W_t^{(j)}\bigr\rceil - m W_t^{(j)},\\
\mathbf{P}\bigl[ N^{(j)}_t&=  \bigl\lceil m W_t^{(j)} \bigr\rceil \bigr] = m W_t^{(j)} - \bigl\lfloor  m W_t^{(j)}\bigr\rfloor .
\end{aligned}
\end{equation}



 The above steps define a randomized iterative algorithm to generate approximations $V_t^m$ of $v_t.$  The second randomization procedure (generating $Y_t^m$ from $V_t^m$) will typically require $\mathcal{O}(m)$ operations, preserving the overall $\mathcal{O}(dm)$ per iteration cost of DMC (as we have described it).  The memory requirements of the scheme are also $\mathcal{O}(dm).$
The eigenvalue $\lambda_*$ can be approximated, for example, by
\[
-\frac{1}{\varepsilon} \log \left( \frac{1}{T} \sum\nolimits_{t=1}^T \frac{1}{N_t}\sum\nolimits_{j=1}^{N_t} 
e^{- \frac{\varepsilon}{2}\left(U(\xi_{t+1}^{(j)}) +  U(X ^{(j)}_t)\right) }
\right)
\]
for $T$ large.  


Before moving on to more general problems notice that the scheme just outlined applies just as easily to space-time discretizations of $e^{-\varepsilon \mathcal{H}}.$  For example, if we set $h =\sqrt{(1+2d)\varepsilon}$ and denote by $\mathcal{E}_h^d\subset \mathbb{R}^d$ the  uniform rectangular grid with resolution $h$ in each direction,  the operator $e^{-\varepsilon \mathcal{H}}$ can be discretized using 
\[
e^{-\varepsilon \mathcal{H}} \approx K_{\varepsilon,h} =  e^{-\frac{\varepsilon}{2}U} e^{\frac{\varepsilon}{2} \Delta_h} e^{-\frac{\varepsilon}{2} U}
\]
where, for any vector $g\in \mathcal{E}_h^d$,
\[
\Delta_h g(x) = \frac{1}{\varepsilon}\left(-g(x) + \frac{1}{1+2d} \sum\nolimits_{y\in \mathcal{E}_h^d, \; \lVert y-x\rVert_2 \leq h } g(y)  \right)
\]
(here we find it convenient to identify functions $g:\mathcal{E}_h^d\rightarrow \mathbb{R}$ and vectors in $\mathbb{R}^{\mathcal{E}_h^d}$).  The operator $e^{-\varepsilon \Delta_h}$ again has a stochastic representation; now the representation is in terms of a jump Markov process with jumps from a point in $\mathcal{E}_h^d$ to one of its nearest neighbors on the grid (for an interesting approach to discretizing stochastic differential equations taking advantage of a similar observation see \cite{BouRabeeVandenEijnden:2015:SDEbySSA}). 
 
\begin{remark}
The reader should notice that not only will we be unable to store the matrix $K_{\varepsilon,h}$ (which is exponentially large in $d$) or afford to compute $K_{\varepsilon,h} v$ for a general vector $v,$ but we will not even be able to store the iterates $v_t$ generated by the power method.  Even the sparse matrix routines developed in numerical linear algebra to deal with large   matrices are not reasonable for this problem. 
\end{remark}  

In this discrete context, a direct application of the DMC approach 
(as in \cite{NightingaleBlote:1986:TMMC}) would represent the solution vector as a superposition of standard basis elements\footnote{The delta functions represent the indices of $v_t$ that we are keeping track of; $\delta_\xi$ is more commonly denoted $\mathbf{e}_\xi$ in numerical linear algebra --- the standard basis vector with $1$ in the $\xi$th coordinate and zero elsewhere.} and replace calculation of $K_{\varepsilon,h}v$ by a random approximation whose cost is (for this particular problem) free of any direct dependence on the size of $K_{\varepsilon,h}$ (though its cost can depend on structural properties of $K_{\varepsilon,h}$ which may be related to its size), whose expectation is exactly $K_{\varepsilon,h} v,$ 
and whose variance is small.   The approach in \cite{BoothThom:2009:CIQMC} is also an application of these same basic DMC steps to a discrete problem, though in that case the desired eigenvector has entries of {\it a priori}
 unkown sign, requiring that the solution vector be represented by a superposition of signed standard basis elements.

In this article we take a different approach to adapting DMC to discrete problems.  Instead of reproducing in the discrete setting exactly the steps comprising DMC, consider a slightly modified scheme that omits direct randomization of an approximation to $e^{\varepsilon \Delta_h},$ and instead relies solely on a general random mapping $\Phi^m_t$ very similar to the map from $V_t^m$ to $Y_t^m$ but which takes a vector $V_t^m \in \mathbb{R}^{\mathcal{E}_h^d}$ with non-negative entries and  $\lVert V_t^m\rVert_1 = 1 $ (i.e., a probability measure on $\{1,2,\dots, \lvert \mathcal{E}_h^d\rvert\}$) and produces a new random vector $Y_t^m$ with  $m,$ or nearly $m,$  non-zero components and satisfying $\mathbf{E}\left[ Y_t^m \mid V_t^m\right] = V_t^m$ as above.  
Starting from a non-negative initial vector $V_0^m\in \mathcal{E}_h^d$  with $\lVert V_0^m\rVert_1 =1$ and with at most $\mathcal{O}(md)$ non-zero entries,   $V_{t+1}^m$ is generated from $V_t^m$ as follows:
\begin{enumerate}[\textsc{Step} 1.]
\item  Generate $Y_t^m = \Phi^m_{t}\left(V_{t}^m\right)$ with approximately or exactly $m$ non-zero entries.
\item  Set
$V_{t+1}^m = \dfrac{  K_{\varepsilon,h} Y_t^m}{\lVert K_{\varepsilon,h} Y_t^m \rVert_1}$.
\end{enumerate}
Just as at iteration $t,$ DMC produces a random approximation of the result of $t$ iterations of  power iteration for the infinite dimensional integral operator $e^{-\varepsilon \mathcal{H}},$ the above steps produce a random approximation of the result of $t$ iterations of the power iteration for the matrix $K_{\varepsilon,h}.$
The improved efficiency of DMC is due to the application of the integral operator to  a finite sum of delta functions in place of a more general function.  Similarly, the efficiency of the finite dimensional method in the above two step procedure is a consequence of replacement of a general vector $v$ in the product $K_{\varepsilon,h} v$ by a sparse approximation, $\Phi^m_t(v).$   

For the random mapping $\Phi^m_t(v)$ we might, for example,  adapt the popular resampling choice mentioned above and choose the entries of $Y_t^m$ independently with 
\begin{equation}\label{Phi1}
\begin{aligned}
\mathbf{P}\bigl[ (Y_t^m)_j &=  \bigl\lfloor  (V_t^m)_j \,m \bigr\rfloor/m \bigr] =\bigl\lceil  (V_t^m)_j\, m\bigr\rceil - (V_t^m)_j\,m, \\
\mathbf{P}\bigl[ (Y_t^m)_j &=  \bigl\lceil (V_t^m)_j\, m\bigr\rceil/m \bigr] =  (V_t^m)_j\, m - \bigl\lfloor  (V_t^m)_j \,m\bigr\rfloor .
\end{aligned}
\end{equation}
\noindent \jqw{Note that this rule results in a vector $Y_t^m$ with expectation exactly equal to $V_t^m.$  On the other hand, when the number of non-zero entries in $V_t^m$ is large, many of those entries must be less than $1/m$ (because $\lVert V_t^m\rVert_1 = 1$) and will have some probability of being set equal to zero in $Y_t^m.$ 
 In fact, the number of non-zero entries in $Y_t^m$ has expectation and variance bounded by $m.$}  
 The details of the mappings $\Phi_t^m,$ which we call compression mappings, will be described later in Section~\ref{sec:rr} where, for example, we will find that the cost of applying the mapping $\Phi^m_t$ will typically be $\mathcal{O}(n)$ when its argument has $n$ non-zero entries (in this setting $n=\mathcal{O}(md)$). And while the cost of applying $K_{\varepsilon,h}$ to an arbitrary vector in $\mathbb{R}^{\mathcal{E}_h^d}$ is $\lvert \mathcal{E}_h^d\rvert,$ the cost of applying $K_{\varepsilon,h}$ to a vector with $m$ non-zero entries is only $\mathcal{O}(md).$  The total cost of the scheme per iteration is therefore $\mathcal{O}(md)$ in storage and operations.   These cost requirements are dependent on the particular spatial discretization of $e^{\varepsilon \Delta};$ if we had chosen a discretization corresponding to a dense matrix $K_{\varepsilon,h}$ then the cost reduction would be much less extreme.  Nonetheless, as we will see in Section \ref{sec:gen}, the scheme just described can be easily generalized and, as we will see in Sections \ref{sec:conv} and \ref{sec:ex}, will often result in methods that are significantly faster than their deterministic counterparts.    

\begin{remark}
Rather than focusing on sampling indices of entries in a vector $v,$ as is typical of some of the literature on randomized numerical linear algebra, we focus on constructing an accurate sparse representation of $v.$  This is primarily a difference of perspective, but has consequences for the accuracy of our randomizations.  For example, the techniques in \cite{DimovDimov:1998:RNLAinv, 
DimovKaraivanova:1998:RNLA} and in \cite{DrineasKannan:2006:MatMult, DrineasKannan:2006:LowRank, DrineasKannan:2006:CompMat} would correspond, in our notation and context, to setting for $v\in \mathbb{R}^n$
\begin{equation}\label{multiPhi}
\Phi_t^m(v) = \frac{\lVert v \rVert_1}{m} \sum_{j=1}^n \frac{v_j}{\lvert v_j\rvert}N_t^{(j)} \mathbf{e}_j
\end{equation}
where $\mathbf{e}_j$ is the $j$th standard basis element and the random vector
\[
\bigl(N_t^{(1)},N_t^{(2)},\dots,N_t^{(n)}\bigr)\sim \operatorname{\textsc{Multinomial}}(m, p_1,\dots,p_n)
\]
with 
$p_j = \lvert v_j\rvert / \lVert v\rVert_1.$  As for all Monte Carlo methods, the sparse characteristic of this representation is responsible for gains in efficiency.  And, when error is measured by the norm in \eqref{opnormintro}, only a random sparse representation can be accurate for general $v.$  But random index selection yields only one of many possible random sparse representations of $v$ and not one that is particularly accurate.  In fact, \emph{effective fast randomized iteration schemes of the type introduced in this paper cannot be based solely on random index selection as in \eqref{multiPhi}.}  In the setting of this section, if we were to use \eqref{multiPhi} in place of the rule in \eqref{Phi1} the result would be an unstable scheme (the error would become uncontrollable as $\varepsilon$ is decreased). As we will see in Section \ref{sec:rr}, much more accurate sparse representations are possible.
\end{remark}

Even restricting ourselves to the quantum Monte Carlo context, there is ample motivation to generalize the DMC scheme.    Often one wishes to approximate not the smallest eigenvalue of $\mathcal{H}$ but instead the smallest eigenvalue corresponding to an antisymmetric (in exchange of particle positions) eigenfunction.  DMC as described in this section, cannot be applied directly to computing this value, a difficulty commonly referred to as the Fermion sign problem.  Several authors have attempted to address this issue with various modifications of DMC.  In particular, Alavi and coworkers recently developed a version of DMC for a particular spatial disretization of the Hamiltonian (in the configuration interaction basis) that exactly preserves antisymmetry (unlike the finite difference discretization we just described).  Run to convergence, their method provides the same approximation as the so called full CI method but can be applied with a much larger basis (e.g.\ in experiments by Alavi and coworkers reported in \cite{ShepherdBooth:2012:FCIQMC6} up to $10^{108}$ total functions in the expansion of the solution).   Though the generalizations of DMC represented by the two step procedure in the last paragraph and by the developments in the next section are motivated by the scheme proposed in \cite{BoothThom:2009:CIQMC}, they differ substantially in their details and can be applied to a wider range of problems (including different discretizations of $\mathcal{H}$).  Finally we remark that, while we have considered DMC in the particular context of computing the ground state energy of a Hamiltonian, the method is used for a much wider variety of tasks with only minor modification to its basic structure.  For example, particle filters (see e.g.\ \cite{deFreitasDoucetGordon:2005:book}) are an application DMC to on-line data assimilation and substantive differences are mostly in the interpretation of the operator to which DMC is applied (and the fact that one is typically interested in the solution after finitely many iterations).

\section{A  general framework}\label{sec:gen}

Consider the general iterative procedure,   
\begin{equation}\label{Mit}
v_{t+1} = \mathcal{M}(v_t)
\end{equation}
for $v_t \in \mathbb{C}^n$.  Eigenproblems, linear systems, and matrix exponentiation can each be accomplished by versions of this iteration.  In each of those settings the cost of evaluating $\mathcal{M}(v)$ is dominated by a matrix-vector multiplication.  We assume that the cost (in terms of floating point operations and storage) of performing the required matrix-vector multiplication makes it impossible to carry out recursion \eqref{Mit} to the desired precision.  As in the steps described at the end of the last section, we will consider the error resulting from replacement of \eqref{Mit} by 
\begin{equation}\label{Rit}
V^m_{t+1} = \mathcal{M}\left(\Phi^m_t(V^m_t)\right).
\end{equation}
where the compression maps 
$\Phi^m_t: \mathbb{C}^n \rightarrow \mathbb{C}^n$ are independent,  inexpensive to evaluate, and enforce sparsity in the $V_t^m$ iterates (the number of non-zero entries in $V_t^m$ will be $\mathcal{O}(m)$) so that $\mathcal{M}$ can be evaluated at much less expense.  
When $\mathcal{M}$ is a perturbation of identity \emph{and} an $\mathcal{O}(n)$ scheme is appropriate (see Sections~\ref{sec:pert} and \ref{sec:conv} below) we will also consider the scheme, 
\begin{equation}\label{Ritpert}
V^m_{t+1} = V_t^m + \mathcal{M}\left(\Phi^m_t(V^m_t)\right) - \Phi^m_t(V^m_t).
\end{equation}
  The compressions $\Phi^m_t$ will satisfy (or very nearly satisfy) the statistical consistency criterion
\begin{equation*}
\mathbf{E}\left[  \Phi^m_t(v) \right] = v
\end{equation*}
and will have to be carefully constructed to avoid instabilities and yield effective methods. For definiteness one can imagine that $\Phi^m_t$ is defined by a natural extension of   \eqref{Phi1} 
\begin{equation}\label{Phi2}
\begin{aligned}
\mathbf{P}\left[ (\Phi_t^m(v))_j = \frac{v_j}{m \lvert v_j\rvert} \left\lfloor  \frac{m \lvert v_j\rvert}{ \lVert v\rVert_1} \right\rfloor \right] &= \left\lceil  \frac{m \lvert v_j\rvert}{ \lVert v\rVert_1} \right\rceil - \frac{m \lvert v_j\rvert}{\lVert v\rVert_1},\\
\mathbf{P}\left[ (\Phi_t^m(v))_j = \frac{v_j}{m \lvert v_j\rvert} \left\lceil \frac{m \lvert v_j\rvert}{ \lVert v\rVert_1} \right\rceil \right] &= \frac{m \lvert v_j\rvert}{\lVert v\rVert_1}-\left\lfloor  \frac{m \lvert v_j\rvert}{ \lVert v\rVert_1} \right\rfloor 
\end{aligned}
\end{equation}
to accept arguments $v\in \mathbb{C}^n$ ($V_t^m$ is no longer a non-negative real number). This choice has several drawbacks, not least of which is its cost, and \emph{we do not use it in our numerical experiments}.
Alternative compression schemes, including the one used in our numerical simulations, are considered in detail in Section~\ref{sec:rr}. There we will learn that one can expect that, for any pair $f,v \in \mathbb{C}^n,$
\begin{equation}\label{Cerror1}
\sqrt{\mathbf{E}\left[ \lvert f^\ct \Phi^m_t(v) - f^\ct v\rvert^2\right]} \leq \frac{2}{\sqrt{m}}\lVert f\rVert_\infty \lVert v\rVert_1
\end{equation}
(the superscript ${}^\ct$ is used   throughout this article to denote the conjugate transpose of a vector with complex entries).
These errors are introduced at each iteration and need to be removed to obtain an accurate estimate.  Depending on the setting, we may rely on averaging over long trajectories, averaging over parallel simulations (replicas), or dynamical self-averaging (see Sections~\ref{sec:pert} and \ref{sec:conv}), to remove the noise introduced by our randomization procedure.  Because the specific choice of $\mathcal{M}$ and the form of averaging used to remove noise can differ substantially by setting, we will describe the schemes within the context of specific (and common) iterative procedures.  


\subsection{The eigenproblem revisited}\label{sec:eig}
Consider, for example, a more general eigenproblem than the one we considered in Section~\ref{sec:DMC}.
Given $K \in \mathbb{C}^{n \times n}$ the goal is to determine $\lambda_* \in \mathbb{C}$ and $v_* \in \mathbb{C}^n$ such that
\begin{equation}\label{eigeq1}
K v_* = \lambda_* v_* 
\end{equation}
and such that, for any other solution pair $(\lambda, v)$, $|\lambda| < |\lambda_*|$.  In what  follows in this section we will assume that this problem has a unique solution.
The standard methods of approximate solution of \eqref{eigeq1} are variants of the power method, a simple version of which performs
\begin{equation}\label{pow1}
v_{t+1} = \frac{K v_{t}}{\lVert K v_{t}\rVert_1},\qquad
\lambda_{t+1} = \frac{u^\ct K v_{t}}{u^\ct v_{t}}
\end{equation}
where $u\in \mathbb{C}^n$ is chosen by the user.
Under generic conditions, these converge to the correct $(\lambda_*,v_*)$ starting from an appropriate initial vector $v_0$
 (see e.g.\ \cite{Demmel:1997:NLA}). The scheme in \eqref{pow1} requires $\mathcal{O}(n^2)$ work per iteration and at least $\mathcal{O}\left(n\right)$ storage. 
In this article, we are interested in situations in which  these cost and storage requirements are unreasonable.  

\paragraph{From $\mathcal{O}\left(n^2\right)$ to $\mathcal{O}\left(n m\right)$}

For the iteration in \eqref{pow1} the randomized scheme \eqref{Rit} (along with an approximation of $\lambda_*$) becomes
\begin{equation}\label{random2}
\begin{aligned}
V_{t+1}^m &= \frac{K \Phi^m_t(V^m_t)}{\lVert K \Phi^m_t(V^m_t)\rVert_1}, & \Lambda_{t+1}^m &= \frac{u^\ct   K \Phi^m_t(V^m_t)}{u^\ct V_t^m},\\
\overline{V}_{t}^m &= \frac{1}{t} \sum_{s=1}^t   V_{s}^m, \qquad &\overline{\Lambda}_{t}^m , & = \frac{1}{t}\sum_{s=1}^t \Lambda_{t}^m
\end{aligned}
\end{equation}
where $\overline{V}_t^m$ and $\overline{\Lambda}_t^m$ are trajectory averages estimating $v_*$ and $\lambda_*.$
 In \eqref{random2}, the compressions $\Phi^m_t$ are independent of one another.  Using the rules defining $\Phi^m_t$ in Section~\ref{sec:rr}, construction of $\Phi^m_t(V^m_t)$ at each step will require $\mathcal{O}(n)$ operations.  Since multiplication of the vector $\Phi^m_t(V^m_t)$ by a dense matrix $K$ requires $\mathcal{O}(nm)$ operations, this scheme has  $\mathcal{O}(n m)$ cost and $\mathcal{O}(n)$ storage per iteration requirement.  

\jqw{ 
Iteration \eqref{Ritpert} on the other hand replaces \eqref{pow1} with  
 \begin{equation}\label{random3}
V_{t+1}^m =V_t^m + \left( \frac{K \Phi^m_t(V^m_t)}{\lVert K \Phi^m_t(V^m_t)\rVert_1} - \Phi^m_t(V^m_t)\right), \quad \Lambda_{t+1}^m =
 \frac{u^\ct  K  \Phi^m_t(V^m_t)}{u^\ct V_t^m}.
\end{equation}
By the same arguments as above, this iteration will also have cost and storage requirements of   $\mathcal{O}(n m)$ and $\mathcal{O}(n)$ respectively.  
When $K = I + \varepsilon A$ for some matrix $A$ and small parameter $\varepsilon>0,$ the iteration in \eqref{random3} bears strong resemblance to the Robbins--Monro stochastic approximation algorithm \cite{Robbins:1951:SA, KushnerYin:2003:SA}.  In fact, as we will see in Section~\ref{sec:conv}, when the mapping $\mathcal{M}$ is of the form $ v + \varepsilon b(v)$ the convergence of methods of the form in \eqref{Rit} and \eqref{Ritpert} is reliant on the self-averaging phenomenon also  at the heart of stochastic approximation.    We will also learn that for $\mathcal{M}$ of this form one can expect the error corresponding to \eqref{Ritpert} to be smaller than the error  corresponding to \eqref{Rit}. }

\paragraph{From $\mathcal{O}(n m)$ to $\mathcal{O}(m)$}
For many problems even $\mathcal{O}(n m)$  cost and storage requirements are unacceptable.  This is the case, for example, when $n$ is so large that a vector of length $n$ cannot be stored.  But now suppose that $K$ is sparse with at most $q$ non-zero entries per column.  Because $\Phi^m_{t-1}(V^m_{t-1})$ has $\mathcal{O}(m)$ non-zero entries, the product $K\Phi^m_{t-1}(V^m_{t-1})$ (and hence also $V_t^m$) has at most $\mathcal{O}(qm)$ entries and requires $\mathcal{O}(qm)$ operations to assemble.  On the other hand, if $V^m_t$ has at most $\mathcal{O}(qm)$ non-zero entries, then application of $\Phi^m_t$ to $V^m_t$ requires only $\mathcal{O}(qm)$ operations.  Consequently, as long as $V^m_0$ has at most $\mathcal{O}(qm)$ non-zero entries, the total number of floating point operations required by \eqref{random2} reduces to $\mathcal{O}\left(qm\right)$ per iteration.
 This observation does not hold for methods of the form  \eqref{Ritpert} which will typically result in dense iterates $V_t^m$ and a cost of $\mathcal{O}(n)$ even when $K$ is sparse.

As we have mentioned (and as was true in Section~\ref{sec:DMC}), in many settings even storing the full solution vector is impossible.  Overcoming this impediment  requires a departure from the usual  perspective of numerical linear algebra.   Instead of trying to approximate all entries of $v_*,$ our goal becomes to compute
\[
f_* = f^\ct v_*
\]
for some some vector (or small number of vectors) $f.$  This change in perspective is reflected in the form of our compression rule error estimate in \eqref{Cerror1} and in the form  of our convergence results in Section~\ref{sec:conv} that measure error  in terms of dot products with test vectors as in   \eqref{opnormintro} above. 
As discussed in more detail in Section \ref{sec:conv}, the choice of error norm in \eqref{opnormintro} is essential to our eventual error estimates.   Indeed, were we to estimate a more standard quantity such as 
\[
\mathbf{E}\left[ \lVert V^m_t - v_t \rVert_1\right]
\]
we would find that the error decreased proportional to $(n-m)/n$ requiring that $m$ increase with $n$ to achieve fixed accuracy.
  The algorithmic consequence of our focus on computing low dimensional projections of $v_*$  is simply  the removal in \eqref{random2} of the equation defining $\overline V_t^m$ and insertion of 
\begin{equation}\label{Fitr}
F_{t}^m = f^\ct V^m_{t}  \quad\text{and}\quad \overline{F}_{t}^m = \frac{1}{t}\sum_{s=1}^t F_s^m
\end{equation}
which produces an estimate $F_t^m$ of $f_*.$  

\begin{remark}
While estimation of $f_*$ may seem an unusual goal in the context of classical iterative schemes it is completely in line with the goals of any Markov chain Monte Carlo scheme which typically seek only to compute averages with respect to the invariant measure of a Markov chain and not to completely characterize that measure.
\end{remark}

\jqw{Schemes with $\mathcal{O}\left(m\right)$ storage and operations requirements per-iteration can easily be designed for any general matrix.  Accomplishing this for a dense matrix  requires an additional randomization in which columns of $K$ (or of some factor of $K$) are randomly set to zero independently at each iteration, e.g.\ again in the context of power iteration, assuming that $V_{t-1}^m$ has at most $\mathcal{O}(m)$ non-zero entries, one can use
\begin{equation}\label{altrand}
V_{t+1}^m = \frac{Y_{t+1}^m}{\lVert Y_{t+1}^m\rVert_1} \quad\text{with}\quad Y_{t+1}^m = {\sum_{j=1}^n  \left(\Phi_t^m(V_t^m)\right)_j \Phi_t^{m^j_t,j}\left(  K_j \right)}
\end{equation}
in place of \eqref{random2}, 
where here $K_j$ is used to denote the $j$th column of $K$ and each $\Phi_{t}^{m^j_t,j}$ is an independent copy of $\Phi_{t}^{m^j_t}$ which are assumed independent of $\Phi_t^m.$  The number of entries retained in each column is controlled by $m^j_t$ which can, for example,  be set to
\[
m^j_t = \left\lceil \frac{\lVert K_j \rVert_1 \lvert \left(V_{t}^m\right)_j\rvert}{\sum_{\ell=1}^n \lVert K_\ell \rVert_1 \lvert \left(V_{t}^m\right)_\ell\rvert}\, m\right\rceil \quad\text{or}\quad m^j_t = \left\lceil\lvert \left(V_{t}^m\right)_j \rvert \, m\right\rceil
\]   at each iteration and the resulting vector can then be compressed so that it has exactly or approximately $m$ non-zero entries.    Use of \eqref{altrand} in place of \eqref{random2}  will result in a scheme whose cost per iteration is independent of $n$   if the compressions of the columns have cost independent of $n.$  This may be possible without introducing significant error, for example, when  the entries in the columns of $K$ can take only a small number of distinct values.  Notice that one obtains the update in \eqref{random2} from \eqref{altrand} by removing the compression of the columns.  Consequently, given $V_t^m,$ the conditional variance of $V_{t+1}^m$ generated by \eqref{altrand} will typically exceed the conditional variance resulting from \eqref{random2}.  
}

\subsection{Peturbations of identity}\label{sec:pert}
We now consider the case that $\mathcal{M}$ is a perturbation of the identity, i.e., that
\begin{equation}\label{Mpert1}
\mathcal{M}(v) -  v= \varepsilon\, b(v) + o(\varepsilon)
\end{equation}
where $\varepsilon$ is a small positive parameter.  This case is of particular importance because, when the goal is to solve a differential equation initial value problem
\begin{equation}\label{ode1}
\frac{d}{dt} y = b(y),\qquad y(0) = y_0,
\end{equation}
discrete-in-time approximations take the form \eqref{Mit} with $\mathcal{M}$ of the form in \eqref{Mpert1}.
As is the case in several of our numerical examples, the solution to \eqref{ode1} may represent, for example, a semi-discretization (a discretization in space) of a partial differential equation (PDE).

Several common tasks in numerical linear algebra, not necessarily directly related to ODE or PDE can also be addressed by considering \eqref{ode1}.  For example, suppose that we solve  the ordinary differential equation (ODE) \eqref{ode1} with $b(y) = Ay-r$ for some $r\in \mathbb{C}^n$ and any $n\times n$ complex valued matrix $A.$  The solution to \eqref{ode1} in this case is
\[
y(t) = e^{t A} y_0 + A^{-1}\left( I - e^{tA}\right)r.
\]
Setting $r=0$ in the last display we find that 
any method to approximate ODE \eqref{ode1} for $t=1$ can  be used to approximate the product of a given vector and the exponential of the matrix $A.$
On the other hand, if $r\neq 0$ and all eigenvalues of $A$ have negative real part then, for very large $t,$ the solution to \eqref{ode1} converges to $A^{-1} r.$  In fact, in this case  we obtain the continuous time variant of Jacobi iteration for the equation $Ax = r.$  Like Jacobi iteration, it can be extended to somewhat more general matrices.  Discretizing  \eqref{ode1} in time with $b(y) = Ay-r$ and a small time step allows treatment of matrices with a wider range of eigenvalues than would be possible with standard Jacobi iteration.  

Some important eigenproblems are also solved using an $\mathcal{M}$ satisfying \eqref{Mpert1}.  For example, this is the case when the goal is to compute the eigenvalue/vector pair corresponding to the  eigenvalue of largest real part (rather than of largest magnitude) of a differential operator, e.g.\ the Schr\"odinger operator discussed in Section~\ref{sec:DMC}.
While the power method applied directly to a matrix $A$ converges to the eigenvector of $A$ corresponding to the eigenvalue of largest magnitude,
 the angle between the vector $\exp(tA)y_0$  and the eigenvector corresponding to the eigenvalue of $A$ with largest real part converges to zero (assuming $y_0$ is not orthogonal to that eigenvector).  If we discretize \eqref{ode1} in time with $b(v) = Av$ and renormalize the solution at each time step (to have unit norm) then the iteration will converge to the desired eigenvector (or a $\varepsilon$ dependent approximation of that eigenvector).

As we will learn in the next section, designing effective fast randomized iteration schemes for these problems requires that the error in the stochastic representation $\mathcal{M}\circ \Phi^m_t$ of $\mathcal{M}$ decrease sufficiently rapidly with $\varepsilon.$  
In particular, in order for our schemes to accurately approximate solutions to \eqref{ode1} over  intervals of $\mathcal{O}(1)$ units of time  (i.e., over $\mathcal{O}\left(1/\varepsilon\right)$ iterations of the discrete scheme),  
 we will need, and will verify in certain cases, that 
\[
\sqrt{\mathbf{E}\left[ \lvert f^\ct \mathcal{M}\left(\Phi^m_t(v)\right) - f^\ct  \mathcal{M}(v)\rvert^2 \right]} \sim \sqrt{\varepsilon}.
\]
Obtaining a bound of this type will require that we use a carefully constructed random compression $\Phi^m_t$ such as those described in Section~\ref{sec:rr}.  
In fact, when a scheme with $\mathcal{O}(n)$ cost per iteration is  acceptable, iteration \eqref{Rit} can be replaced by \eqref{Ritpert}, i.e., by 
\begin{equation}\label{Rpert1}
V_{t+1}^m = V_t^m +  \varepsilon\, b\left(\Phi^m_t(V_t^m)\right)+ o(\varepsilon)
\end{equation}
in which case we can expect errors over $\mathcal{O}\left(\varepsilon^{-1}\right)$ iterations that vanish with $\varepsilon$ (rather than merely remaining stable).
As we will see in more detail in the next section, the principle of dynamic self-averaging is essential to the convergence of either \eqref{Rit} or \eqref{Ritpert} when $\mathcal{M}$ is a perturbation of identity.
 The same principle is invoked in the contexts of, for example, multi-scale simulation (see e.g.\ \cite{PavliotisStuart:2008:Averaging} and \cite{EEngquist:2007:HMM} and the many references therein) and stochastic approximation (see e.g.\ \cite{KushnerYin:2003:SA} and the many references therein).

\section{Convergence}\label{sec:conv}


Many randomized  linear algebra schemes referenced in the opening paragraph of this article
 rely at their core on an approximation of a product such as $AB,$ where, for example, $A$ and $B$ are $n \times n$ matrices, by a product of the form $A \Theta B$ where $\Theta$ is an $n\times n$ random matrix with $\mathbf{E}\left[ \Theta \right] = I$ and so that $A\Theta B$ can be assembled at much less expense than $AB.$  For example, one might choose $\Theta$ to be a diagonal matrix with only $m\ll n$ non-zero entries on the diagonal so that $\Theta B$ has only $m$ non-zero rows and $A \Theta B$ can be assembled in $\mathcal{O}(n^2 m)$ operations instead of $\mathcal{O}(n^3)$ operations.  Alternatively one might choose $\Theta = \xi \xi^\tp $ where $\xi$ is a $n\times m$ random matrix with independent entries, each having mean 0 and variance $1/m$.  With this choice one can again construct $A\Theta B$ in $\mathcal{O}(n^2 m)$ operations.  Typically, this randomization is carried out once in the course of the algorithm.  The error made in such an approximation can be expected to be of size $\mathcal{O}(1/\sqrt{m})$ where the prefactor depends (very roughly) on the size of the matrices (and other structural properties) but does not depend directly on $n$ (see e.g.\ \cite[Equation~30]{Hammersley:1960:mc} or \cite[Theorem 1]{DrineasKannan:2006:MatMult}).  

In the schemes that we consider, we apply a similar randomization to speed matrix vector multiplication at each iteration of the algorithm (though our compression rules vary in distribution from iteration to iteration). 
As explored below, the consequence is that any stability property of the original, deterministic iteration responsible for its convergence, will be weakened by the randomization and that effect may introduce an additional $n$ dependence in the cost of the algorithm to achieve a fixed accuracy.  The compression rule must therefore be carefully constructed to minimize error.  Compression rules are discussed in detail in Section~\ref{sec:rr}.  In this section, we consider the error resulting from \eqref{Rit} and \eqref{Ritpert} for an unspecified compression rule satisfying the generic error properties established (with caveats) in Section~\ref{sec:rr}.  Both because it provides a dramatic illustration of the need to construct accurate compression rules and because of its importance in practical applications, we pay particular attention to the case in which $\mathcal{M}$ is an $\varepsilon$-perturbation of the identity. Our results rely on  classical techniques in the numerical analysis of deterministic and stochastic dynamical systems and, in particular, are typical of basic results concerning the convergence of stochastic approximation (see e.g.\ \cite{KushnerYin:2003:SA} for a general introduction and \cite{MoulinesBach:2011:SGD} for results in the context of machine learning) and interacting particle  methods (see e.g.\ \cite{DelMoral:2004:FK} for a general introduction and  \cite{Rousset:2006:QMC} for results in the context of QMC).  They concern the mean squared size of the difference between the output, $V_t^m,$ of the randomized scheme and the output, $v_t,$ of its deterministic counterpart and are chosen to efficiently  highlight important issues such as the role of stability properties of the deterministic iteration \eqref{Mit}, the dependence of the error on the size of the solution vector, $n,$ and the role of dynamic self-averaging.  More sophisticated results (such as Central Limit Theorems and asymptotic and non-asymptotic exponential bounds on deviation probabilities) are possible following developments in, for example,   \cite{KushnerYin:2003:SA} and \cite{DelMoral:2004:FK, Rousset:2006:QMC}.  In the interest of reducing the length of this article we list the proofs of all of our results separately in a supplemental document.

Our notion of error will be important.  It will not be possible to prove, for example that 
$
\mathbf{E}\left[ \lVert V_t^m - v_t\rVert_1\right]
$
remains small without a strong dependence on $n.$  It is not even the case that
$
\mathbf{E}\left[ \lVert \Phi^m_t\left(v\right) - v \rVert_1 \right]
$
is small when $n$ is large and $\lVert v\rVert_1 = 1.$  Take, for example, the case that $v_i = 1/n.$  In this case any scheme that sets $n-m$ entries to zero will result in an error $\lVert \Phi^m_t(v)- v\rVert_1\geq (n-m)/n.$
On the other hand, we need to choose a measure of error sufficiently stringent so that our eventual error bounds imply that our methods accurately approximate observables of the form $f^\ct v_t$.  For example, analogues of all of the results below using the error metric
$
(\mathbf{E}[ \lVert V_t^m - v_t \rVert_2^2])^{1/2}
$
could be established.  However, error bounds of this form are not, by themselves, enough to imply \jqw{dimension independent} bounds  on the error in $f^\ct v_t$ because they ignore correlations between the components of $V_t^m.$  Indeed, in general one can only expect that $(\mathbf{E}[ \lvert f^\ct V^m_t - f^\ct v_t \rvert])^{1/2}\leq \sqrt{n}(\mathbf{E} \lVert V_t^m - v_t \rVert_2^2])^{1/2}$ when $\lVert f\rVert_\infty\leq 1$

\begin{remark}
It is perhaps more typical in numerical linear algebra to state error bounds in terms of the quantity one ultimately hopes to approximate and not in terms of the distance to another approximation of that quantity.  For example, one might wonder why our results are not stated in terms of the total work required to achieve (say with high probability) an error of a specified size in an approximation of the dominate eigenvalue of a matrix.  Our choice to consider the difference between $V_t^m$ and $v_t$ is motivated by the fact that the essential characteristics contributing to errors due to randomization are most naturally described in terms of the map defining the deterministic iteration.
 More traditional characterizations of the accuracy of the schemes can be inferred from the bounds provided below and error bounds for the corresponding deterministic iterative schemes.
 \end{remark}

Motivated by  our stated goal, as described in Section~\ref{sec:gen}, of estimating quantities of the form $f^\ct v_t$  we measure the size of the (random) errors produced by our scheme using the norm
\begin{equation}\label{opnorm}
\opnorm{X} = \sup_{\lVert f\rVert_\infty \leq 1}
\sqrt{\mathbf{E}\left[ \lvert f^\ct X \rvert^2 \right]}
\end{equation}
where $X$ is a random variable with values in  $\mathbb{C}^n$  (all random variables referenced are assumed to be functions on a single probability space which will be left unspecified). 
This norm is the $(\infty,2)$-norm \cite[Section~7]{FL} of the square root of the second moment matrix of $X,$ i.e.,
\[
\opnorm{X} =\lVert B\rVert_{\infty,2} =  \sup_{\lVert f\rVert_\infty \leq 1} \lVert B f \rVert_2
\]
where \[
B^\ct B = \mathbf{E}\left[ X X^\ct \right].
\]
It is not difficult to see that the particular square root chosen does not affect the value of the norm. It will become apparent that our choice of the norm in \eqref{opnorm} is a natural one for our convergence results in this and the next section.

The following alternate characterization of $\opnorm{\,\cdot\,}$ will be useful later.
\begin{lemma}\label{opnormrep2}
The norm in \eqref{opnorm} may also be expressed as
  \begin{equation}\label{opnorm2}
  \opnorm{X} = \sup_{\lVert G \rVert_{\infty, *} \le 1}
\sqrt{\mathbf{E}\left[ \lVert G X \rVert_1^2 \right]},
\end{equation}
where
\begin{equation}\label{eq:inftydual}
\lVert G \rVert_{\infty, *} := \sum\nolimits_{i=1}^n \max_{j=1,\dots,n} \lvert G_{ij}\rvert 
\end{equation}
is the dual norm\footnote{See \cite[Proposition~7.2]{FL}.} of the $\infty$-norm of $G \in \mathbb{C}^{n \times n}$,
\[
\lVert G \rVert_{\infty} = \max_{\lVert f \rVert_\infty \leq 1} \lVert G f \rVert_\infty = \max_{i=1,\dots,n}  \sum\nolimits_{j=1}^n  \lvert G_{ij}\rvert .
\]
\end{lemma}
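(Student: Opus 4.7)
The plan is to prove \eqref{opnorm2} by establishing the two resulting inequalities separately.

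For the direction bounding the left-hand side of \eqref{opnorm2} by the right, given an arbitrary test vector $f \in \mathbb{C}^n$ with $\lVert f\rVert_\infty \le 1$, I would exhibit a single matrix $G$ that makes the two objective functions agree. The natural candidate is the rank-one matrix $G = e_1 f^\ct$: a direct computation shows $GX = e_1(f^\ct X)$, so $\lVert GX\rVert_1 = |f^\ct X|$, while $G_{ij} = (e_1)_i\,\overline{f_j}$ gives $\max_j|G_{ij}| = (e_1)_i \lVert f\rVert_\infty$ and hence $\lVert G\rVert_{\infty,*} = \lVert f\rVert_\infty \le 1$. Thus every admissible $f$ in the definition \eqref{opnorm} is matched by an admissible $G$ producing the same second moment, so the supremum over $G$ dominates the supremum over $f$.

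The reverse inequality is the harder direction and contains the main obstacle: $\lVert GX\rVert_1 = \sum_i |(GX)_i|$ is a sum of absolute values whose square generates cross terms, so one cannot directly reduce to the single-test-vector definition \eqref{opnorm}. My plan is to decompose $G$ row by row and absorb the cross terms via a weighted Cauchy--Schwarz step. Writing the $i$-th row of $G$ as $h^{(i)\ct}$ so that $(GX)_i = h^{(i)\ct}X$ with $\lVert h^{(i)}\rVert_\infty = \alpha_i := \max_j|G_{ij}|$, and setting $Y_i = |h^{(i)\ct}X|/\alpha_i$ on indices where $\alpha_i > 0$ (indices with $\alpha_i = 0$ contribute zero and can be dropped),
\[
\lVert GX\rVert_1^2 = \Bigl(\sum_i \alpha_i Y_i\Bigr)^2 \le \Bigl(\sum_i \alpha_i\Bigr)\sum_i \alpha_i Y_i^2 \le \sum_i \alpha_i Y_i^2
\]
by Cauchy--Schwarz and the constraint $\sum_i \alpha_i = \lVert G\rVert_{\infty,*} \le 1$. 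Taking expectations and using $\mathbf{E}[Y_i^2] \le \opnorm{X}^2$, which follows immediately from \eqref{opnorm} applied to the unit-$\infty$-norm vector $h^{(i)}/\alpha_i$, one final appeal to $\sum_i \alpha_i \le 1$ yields $\mathbf{E}[\lVert GX\rVert_1^2]\le \opnorm{X}^2$, as required.
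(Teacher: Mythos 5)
Your proof is correct and mirrors the paper's argument: both decompose $G$ by rows, write $\lVert GX\rVert_1$ as a weighted sum of $|h^{(i)\ct}X|/\alpha_i$ with weights $\alpha_i$ summing to at most one, and bound the square of that sum by an elementary convexity step (you use Cauchy--Schwarz where the paper cites Jensen, but for $x\mapsto x^2$ these amount to the same inequality). The rank-one matrix $G=e_1 f^\ct$ you exhibit for the easy direction is also exactly the paper's construction.
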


Note that if the variable $X$ is not random  then one can choose $f_i = X_i/\lvert X_i\rvert$ in \eqref{opnorm} and find that  $\opnorm{ X} = \lVert X\rVert_1.$  When $X$ is random we have the upper bound $\opnorm{X}^2 \leq \mathbf{E}\left[ \lVert X\rVert_1^2\right]$.  
  If, on the other hand,   $X$ is random but has mean zero and independent components then  $\opnorm{X}^2 =\mathbf{E}\left[ \lVert X\rVert_2^2\right].$   Concerning the relationship between these two norms more generally, we rely on the following lemma. 
  \begin{lemma}\label{opnormlowerbound}
Let $A$ be any  $n\times n$ Hermitian matrix with entries in $\mathbb{C}.$  
Then \[
 \sup_{\lVert f\rVert_\infty\leq 1}
 f^\ct  A f \geq  {\rm trace}\, A.
 \]
 \end{lemma}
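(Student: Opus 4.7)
The plan is to use the probabilistic method with random sign vectors. Specifically, I would let $\xi = (\xi_1, \dots, \xi_n)^\tp$ be a random vector whose entries are independent Rademacher variables, i.e., $\xi_i = \pm 1$ each with probability $1/2$. Then $\lVert \xi \rVert_\infty = 1$ almost surely, so every realization is a feasible point in the supremum.

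Next, I would compute the expectation of the quadratic form. Since the $\xi_i$ are independent with mean zero and variance one, $\mathbf{E}[\xi_i \xi_j] = \delta_{ij}$. Expanding,
\[
\mathbf{E}\bigl[ \xi^\ct A \xi \bigr] = \sum_{i,j=1}^n A_{ij}\, \mathbf{E}[\overline{\xi_i} \xi_j] = \sum_{i=1}^n A_{ii} = \operatorname{trace} A.
\]
(Here I use that $\xi$ is real so $\overline{\xi_i} = \xi_i$, and that $\operatorname{trace} A$ is a real number because the diagonal entries of a Hermitian matrix are real.) Observe also that $\xi^\ct A \xi$ itself is real for every realization, since $A$ is Hermitian.

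Since the expectation of the real random variable $\xi^\ct A \xi$ equals $\operatorname{trace} A$, there must exist at least one realization $f \in \{-1, +1\}^n$ for which $f^\ct A f \geq \operatorname{trace} A$. Because this $f$ satisfies $\lVert f \rVert_\infty = 1 \leq 1$, it is admissible in the supremum, which therefore satisfies
\[
\sup_{\lVert f \rVert_\infty \leq 1} f^\ct A f \geq f^\ct A f \geq \operatorname{trace} A.
\]
I do not anticipate any real obstacle here; the main subtlety is just noticing that Rademacher (rather than Gaussian or complex unit-modulus) vectors are the right choice, because they simultaneously satisfy the $\infty$-norm constraint with probability one and produce the identity second-moment matrix needed to make the expectation collapse to the trace.
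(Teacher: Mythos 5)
Your proof is correct, and it takes a genuinely different route from the paper's. The paper argues by induction on the dimension: it peels off the last coordinate, writes $f^\ct A f = \tilde f^\ct \tilde A \tilde f + \lvert f_n\rvert^2 A_{nn} + 2\Re\bigl[\bar f_n \sum_{j<n} A_{nj}\tilde f_j\bigr]$ for the $(n-1)\times(n-1)$ principal submatrix $\tilde A$, applies the induction hypothesis to pick $\tilde f$, and then chooses the phase of $f_n$ (a unit-modulus complex number) so that the cross term is non-negative. That construction is explicit but requires bookkeeping. Your argument instead draws $\xi$ i.i.d.\ Rademacher, computes $\mathbf{E}[\xi^\ct A \xi] = \operatorname{trace} A$ using $\mathbf{E}[\xi_i\xi_j]=\delta_{ij}$, and invokes the first-moment principle: a real-valued random variable must take a value at least its mean, and every realization of $\xi$ is feasible since $\lVert\xi\rVert_\infty=1$. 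This is shorter, avoids induction entirely, and makes the mechanism transparent (the trace is exactly the average of the quadratic form over the Boolean cube). What it gives up is constructiveness: the paper's induction actually tells you, phase by phase, how to build a witness $f$, whereas the probabilistic argument only asserts existence. Both correctly observe that $\operatorname{trace} A$ and $f^\ct A f$ are real for Hermitian $A$, which is needed for the inequality to even make sense.
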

  \noindent Lemma \ref{opnormlowerbound}, applied to the second moment matrix of $X,$    implies that $\opnorm{X}^2 \geq \mathbf{E}\left[ \lVert X\rVert_2^2\right]$.  Summarizing these relationships we have
 \begin{equation}\label{normorder}
\mathbf{E}\left[ \lVert X\rVert_2^2\right]\leq \opnorm{X}^2 \leq 
\mathbf{E}\left[ \lVert X\rVert_1^2\right].
\end{equation}
\jqw{The norms appearing in \eqref{normorder} are all equivalent.  What is important about the inequalities in \eqref{normorder} for our purposes is that they are independent of dimension.}

\begin{conditions} Consistent with results in the next section we will assume that  the typical error from our compression rule is
\begin{equation}\label{genPhierr}
\opnorm{ \Phi^m_t(v) - v} \leq  \frac{\gamma}{\sqrt{m}}\,\lVert v\rVert_1
\end{equation}
for $v\in \mathbb{C}^n,$ where $\gamma$ is a constant  that is independent of $m$ and $n.$
We will also assume that 
\begin{equation}\label{phibnd}
\mathbf{E}\left[\lVert \Phi_t^m(v)\rVert_1^2\right] \leq C_b \lVert v\rVert_1^2
\end{equation}
for some constant $C_b$ independent of $m$ and $n$ (for the compression scheme used in Section \ref{sec:ex}, \eqref{phibnd} is an equality with $C_b=1$).
For all of the compression methods detailed in Section \ref{sec:rr} (including the one used in our numerical experiments in Section \ref{sec:ex}), the statistical consistency  condition
\begin{equation}\label{Cunbiased}
\mathbf{E}\left[\Phi^m_t(v)\right] = v
\end{equation} 
 is satisfied exactly and we will assume that it holds exactly in this section.  Modification of the results of this section to accommodate a bias $\lVert \mathbf{E}\left[ \Phi^m_t(v)\right] - v\rVert_1 \neq 0$ is straightforward.
\end{conditions}

As a result of the appearance of $\lVert v\rVert_1$ in \eqref{genPhierr}, in our eventual error bounds it will be impossible to avoid dependence on $ \lVert V_t^m\rVert_1 .$   The growth of these quantities is controllable by increasing $m,$ but the value of $m$ required will often depend on the $n.$  The next theorem concerns the size of $\mathbf{E}\left[\lVert V_t^m\rVert_1^2\right].$  After the statement and proof of the theorem we discuss how the various quantities appearing there can be expected to depend on $n.$
%
In this theorem and in the rest of our results it will be convenient to recognize that, in many applications, the iterates $V_t^m$ and $Y_t^m = \Phi^m_t(V_t^m)$ are confined within some subset of $\mathbb{C}^n.$  For example, the iterates may all have a fixed norm or may have all non-negative entries.  We use the symbol $\mathcal{X}$ to identify this subset (which differs depending on the problem).  Until Theorem \ref{ft3} at the end of this section, our focus will be on iteration \eqref{Rit} though all of our results have analogues when \eqref{Rit} is replaced by iteration \eqref{Ritpert}.
\begin{theorem}\label{stable1}  Assume that $V_t^m$ is generated by either \eqref{Rit}  with a compression rule satisfying \eqref{genPhierr} and \eqref{Cunbiased}.
Suppose that $\mathcal{U}$ is a twice continuously differentiable function from $\mathcal{X}$ to $\mathbb{R},$ satisfying  
\[
\mathcal{U}(\mathcal{M}(v)) \leq \alpha\,  \mathcal{U}(v) + R\quad\text{for all}\quad v\in \mathcal{X}
\]
for some constants $\alpha$ and $R,$ and
\[
\lVert v\rVert_1^2 \leq \beta\, \mathcal{U}(v)\quad\text{for all}\quad v\in \mathcal{X}
\]
for some constant $\beta.$  Assume further that there is a constant $\sigma $ and a matrix $G\in \mathbb{C}^{n\times n}$ satisfying $\lVert G\rVert_{\infty,*}\leq 1$, so that, for   $z \in \mathbb{C}^n,$ 
\[
\sup_{ v \in \mathbb{C}^n} z^\ct  \left(D^2 \mathcal{U}(v)\right)  z \leq \sigma \lVert G z \rVert_1^2 
\]
where $D^2 \mathcal{U}$ is the matrix of second derivatives of $\mathcal{U}.$
Then
\[
\mathbf{E}\left[ \lVert V_t^m\rVert_1^2\right] \leq
 \beta R\Biggl[ \frac{ 1- \alpha^t \bigl( 1 + \frac{ \beta \gamma^2 \sigma}{2 m}\bigr)^t}
{ 1- \alpha \bigl( 1 + \frac{ \beta \gamma^2 \sigma}{2 m}\bigr)}\Biggr]
+ \beta \alpha^t \biggl( 1 + \frac{ \beta \gamma^2 \sigma}{2 m}\biggr)^t \mathcal{U}(V_0^m).
\]
\end{theorem}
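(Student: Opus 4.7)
The plan is to reduce the theorem to an exponentially growing linear recursion for $\mathbf{E}[\mathcal{U}(V_t^m)]$, then convert back to a bound on $\mathbf{E}[\lVert V_t^m\rVert_1^2]$ using the assumed domination $\lVert v\rVert_1^2 \leq \beta \mathcal{U}(v)$. The target one-step inequality I aim for is
\[
\mathbf{E}\bigl[\mathcal{U}(V_{t+1}^m) \,\bigm|\, \mathcal{F}_t\bigr]
\;\leq\;
\alpha\Bigl(1+\tfrac{\beta \gamma^2 C}{2m}\Bigr)\mathcal{U}(V_t^m) + R,
\]
where $\mathcal{F}_t$ is the $\sigma$-algebra generated by $V_0^m,\dots,V_t^m$. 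Once this is in hand, iterating and applying $\lVert v\rVert_1^2\leq \beta\mathcal{U}(v)$ with the geometric-sum identity $\sum_{k=0}^{t-1} a^k = (1-a^t)/(1-a)$ immediately gives the claimed bound.

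First I would apply the $\mathcal{M}$-contraction hypothesis pointwise to reduce the problem by one layer: $\mathcal{U}(V_{t+1}^m) = \mathcal{U}(\mathcal{M}(\Phi_t^m(V_t^m))) \leq \alpha\, \mathcal{U}(\Phi_t^m(V_t^m)) + R$. Next I would control $\mathbf{E}[\mathcal{U}(\Phi_t^m(V_t^m)) \mid \mathcal{F}_t]$ by a second-order Taylor expansion of $\mathcal{U}$ (viewed as a real-valued function of the real and imaginary parts of its argument, so that $D^2 \mathcal{U}$ plays the role of a Hermitian form as in the hypothesis) around $V_t^m$ with integral remainder. The linear term vanishes after taking conditional expectation because $\mathbf{E}[\Phi_t^m(V_t^m)\mid\mathcal{F}_t]=V_t^m$ by \eqref{Cunbiased}, and the quadratic remainder is bounded uniformly in the intermediate argument by $\tfrac{C}{2}\lVert G(\Phi_t^m(V_t^m)-V_t^m)\rVert_1^2$ thanks to the hypothesis $z^\ct (D^2\mathcal{U}(v)) z \leq C\lVert Gz\rVert_1^2$.

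The variance bound then follows by combining Lemma~\ref{opnormrep2} with the compression error hypothesis \eqref{genPhierr}: since $G$ satisfies \eqref{Gbnd}, applying Lemma~\ref{opnormrep2} conditionally on $\mathcal{F}_t$ yields
\[
\mathbf{E}\bigl[\lVert G(\Phi_t^m(V_t^m)-V_t^m)\rVert_1^2\,\bigm|\,\mathcal{F}_t\bigr]
\;\leq\;
\opnorm{\Phi_t^m(V_t^m)-V_t^m}^2
\;\leq\;
\tfrac{\gamma^2}{m}\lVert V_t^m\rVert_1^2
\;\leq\;
\tfrac{\beta\gamma^2}{m}\mathcal{U}(V_t^m),
\]
where the last step invokes the second hypothesis $\lVert v\rVert_1^2\leq\beta\mathcal{U}(v)$. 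Feeding this into the Taylor bound, applying the $\mathcal{M}$-contraction, and taking a further expectation produces the desired one-step recursion. Iterating from $t=0$ then gives $\mathbf{E}[\mathcal{U}(V_t^m)] \leq a^t \mathcal{U}(V_0^m) + R(1-a^t)/(1-a)$ with $a = \alpha(1+\tfrac{\beta\gamma^2 C}{2m})$, and a final multiplication by $\beta$ yields the stated inequality.

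The only delicate point is the Taylor step: one must be careful that the second-order remainder really is dominated by $\tfrac{C}{2}\lVert G h\rVert_1^2$ uniformly in the intermediate base-point, which is what allows the hypothesis (a pointwise bound on the Hessian form) to be combined with the conditional $\opnorm{\cdot}$ estimate from Lemma~\ref{opnormrep2}. Writing the remainder as $\int_0^1 (1-s)\, h^\ct D^2\mathcal{U}(V_t^m+sh)\, h\, ds$ with $h=\Phi_t^m(V_t^m)-V_t^m$ and pulling out $\sup_w h^\ct D^2\mathcal{U}(w) h$ resolves this cleanly, after which the rest of the argument is routine.
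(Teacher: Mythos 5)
Your proposal is correct and follows essentially the same route as the paper's own proof: apply the $\mathcal{M}$-contraction, Taylor-expand $\mathcal{U}$ about $V_t^m$ so the linear term vanishes under conditional expectation (unbiasedness), bound the quadratic remainder via the Hessian hypothesis together with Lemma~\ref{opnormrep2} and \eqref{genPhierr}, pass to $\mathbf{E}[\mathcal{U}(\cdot)]$ using $\lVert v\rVert_1^2\leq\beta\mathcal{U}(v)$, and iterate the resulting scalar recursion. Your explicit integral-remainder form of the Taylor step is a slightly more careful spelling-out of what the paper compresses into ``twice differentiable with bounded second derivative,'' but the argument is the same.
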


First, the reader should notice that setting $\gamma=0$ in  Theorem~\ref{stable1} shows that the deterministic iteration \eqref{Mit} is stable whenever $\alpha <1.$  However, even for an $\mathcal{M}$ corresponding to a  stable iteration, the randomized iteration \eqref{Rit} may not be stable (and will, in general, be less stable).  If the goal is to estimate, e.g.\ a fixed point of $\mathcal{M},$ the user will first have to choose $m$ large enough that the randomized iteration is stable.

Though it is not explicit in the statement of Theorem~\ref{stable1}, in general the requirements for stability will depend on $n.$  Consider, for example, the case of a linear iteration, $\mathcal{M}(v) = Kv.$  This iteration is stable if the largest eigenvalue (in magnitude) is less than 1.  If we choose $\mathcal{U}(v) = \lVert v\rVert_2^2$ then
we can take $\alpha$ to be the largest eigenvalue of $K^\ct  K$ and $R=0$ in the statement of Theorem~\ref{stable1}.
 The   bound $\lVert\, \cdot\, \rVert_1 \leq \sqrt{n} \lVert \, \cdot\, \rVert_2$ (and the fact that it is sharp) suggests that we will have to take  $\beta = n$ in Theorem~\ref{stable1} (note that we can take $\sigma=1$ in the eventual bound).  This scaling suggests that, to guarantee stability we need to choose $m\sim n$.  

Fortunately this prediction is often (but not always) pessimistic.  For example, if $K$ is a matrix with non-negative real entries and $V_0^m$ has non-negative real entries then the iterates $V_t^m$ will have real, non-negative  entries (i.e., $v\in \mathcal{X}$ implies $v_i\geq 0$).  We can therefore use $\mathcal{U}(v) = (\mathbbm{1}^\tp v)^2 = \lVert v\rVert_1^2$ for $v\in \mathcal{X}$ and find that
 we can take $\alpha = \lVert K\rVert_1^2,$ $R=0,$ and $\beta=1,$ in the statement of Theorem~\ref{stable1}. 
With this choice of $\mathcal{U}$ we can again choose $\sigma = 1$  so that $n$ does not appear directly in the stability bound.  We anticipate that most applications will fall somewhere between these two extremes; maintaining stability will require increasing $m$ as $n$ is increased but not in proportion to the increase in $n.$

Having characterized the stability our schemes we now move on to bounding their error.  We have crafted the theorem below to address both situations in which one is interested in the error after a finite number of iterations and situations that require error bounds independent of the number of iterations.  In general, achieving error bounds independent of the number of iterations requires that $\mathcal{M}$ satisfy  stronger stability properties than those implied by the conditions in Theorem~\ref{stable1}.  While the requirements in Theorem~\ref{stable1} could be modified to imply the appropriate properties for most applications, we opt instead for a more direct approach and modify our stability assumptions on  $\mathcal{M}$  to \eqref{cntr1} and \eqref{cntr2} below.  
In this theorem and below we will make use the notation $\mathcal{M}_s^{t}$ to denote $\mathcal{M}$ composed with itself $t-s$ times.   In our proof of the  bound in Theorem~\ref{ft1} below we  divide the error into two terms, one of which is a sum of individual terms with  vanishing conditional expectations.  \jqw{Much like sums of independent, mean zero, random variables with finite variance, the size (measured by the square root of the second moment) of their sum  can be expected to grow less than linearly with the number of iterations (see the proof of Theorem~\ref{ft1}).}  This general observation is called dynamic self-averaging and results in an improved error bound.  The improvement is essential in the context of perturbations of the identity and we will mention it again below when we focus on that case.
\begin{theorem}\label{ft1}
Suppose that the iterates $V^m_t$ of \eqref{Rit} remain in $\mathcal{X}\subset \mathbb{C}^n.$
Fix a positive integer $T.$  Assume that there are constants $\alpha\geq 0,$  $L_1,$ and $L_2,$  so that for every pair of integers $s\leq r \leq T$ and for every vector $f\in \mathbb{C}^n$ with $\lVert f\rVert_\infty \leq 1$ there are matrices $G$ and $G'$ in $\mathbb{C}^{n\times n}$ satisfying $\lVert G\rVert_{\infty,*}\leq 1$ and a 
bounded, measurable $\mathbb{C}^{n\times n}$ valued function, $A,$ 
such that
 \begin{equation}\label{cntr1}
\sup_{v,\tilde v\in \mathcal{X}}\frac{\lvert f^\ct \mathcal{M}_s^{r}(v) - f^\ct \mathcal{M}_s^{r}(\tilde v)\rvert}{ \lVert Gv - G \tilde v\rVert_1
}
 \leq L_1 \alpha^{r-s}
 \end{equation}
 and 
  \begin{equation}\label{cntr2}
\sup_{v,\tilde v\in \mathcal{X}} \frac{\lvert f^\ct \mathcal{M}_s^{r}(v) - f^\ct \mathcal{M}_s^{r}(\tilde v) - f^\ct A(\tilde v) (v -\tilde v) \rvert}{ \lVert G' v - G' \tilde v\rVert_1^2
}
 \leq L_2 \alpha^{r-s}.
 \end{equation}
 Then the error at step $t\leq T$ satisfies the bound
  \[
  \opnorm{V_t^m - v_t} \leq \alpha  \biggl[\gamma \frac{1}{\sqrt{m}} (L_1+L_2)  \biggl( \frac{1-\alpha^{2t}}{1-\alpha^2}\biggr)^{1/2}  M_t+ \gamma^2 \frac{1}{m} L_2  \frac{1-\alpha^t}{1-\alpha} M_t^2 \biggr]  
 \]
 where $M_t^2 = \sup_{r<t}  \mathbf{E}\left[ \lVert V_r^m\rVert_1^2\right].$
\end{theorem}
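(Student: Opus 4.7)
The plan is a martingale/telescope argument built on hypotheses \eqref{cntr1}--\eqref{cntr2}. Let $Y_s^m = \Phi^m_s(V_s^m)$. Using $V^m_{s+1} = \mathcal{M}(Y_s^m)$ together with the composition identity $\mathcal{M}_{s+1}^t \circ \mathcal{M} = \mathcal{M}_s^t$ and the convention $V_0^m = v_0$, a short induction gives the telescoping identity
\[
V_t^m - v_t \;=\; \sum_{s=0}^{t-1}\bigl(\mathcal{M}_s^t(Y_s^m) - \mathcal{M}_s^t(V_s^m)\bigr).
\]
Fix any test vector $f\in\mathbb{C}^n$ with $\lVert f\rVert_\infty \le 1$. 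By hypothesis \eqref{cntr2} there exist a matrix $A_s$ (a measurable function of $V_s^m$) and a matrix $G_s$ satisfying \eqref{Gbnd} so that
\[
f^\ct\!\bigl(\mathcal{M}_s^t(Y_s^m) - \mathcal{M}_s^t(V_s^m)\bigr) \;=\; f^\ct A_s\,(Y_s^m - V_s^m) \;+\; R_s, \qquad |R_s| \le L_2\,\alpha^{t-s}\,\lVert G_s(Y_s^m - V_s^m)\rVert_1^2.
\]
I will analyse the two resulting sums, the martingale part $S_t^{1} := \sum_s f^\ct A_s(Y_s^m - V_s^m)$ and the remainder $S_t^{2} := \sum_s R_s$, separately.

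For $S_t^{1}$, the key observation is that $A_s$ is $V_s^m$-measurable while $\mathbf{E}[Y_s^m - V_s^m \mid V_s^m]=0$ by \eqref{Cunbiased}, so the summands are a martingale difference sequence in the natural filtration and orthogonality gives $\mathbf{E}|S_t^{1}|^2 = \sum_s \mathbf{E}|f^\ct A_s(Y_s^m - V_s^m)|^2$. To bound an individual term I will write $f^\ct A_s(Y-V) = [f^\ct\mathcal{M}_s^t(Y) - f^\ct\mathcal{M}_s^t(V)] - R_s$ and combine \eqref{cntr1} with the bound on $R_s$ to obtain
\[
|f^\ct A_s(Y_s^m - V_s^m)| \;\le\; \bigl(L_1 + L_2\,\lVert G_s(Y_s^m - V_s^m)\rVert_1\bigr)\,\alpha^{t-s}\,\lVert G_s(Y_s^m - V_s^m)\rVert_1,
\]
then absorb the second-order factor into an effective constant using the pointwise control on $\lVert G_s(Y-V)\rVert_1$ supplied by the compression rule (Section~\ref{sec:rr}). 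Conditioning on $V_s^m$, invoking Lemma~\ref{opnormrep2}, and applying \eqref{genPhierr} then yields
\[
\mathbf{E}|f^\ct A_s(Y_s^m - V_s^m)|^2 \;\le\; (L_1+L_2)^2\,\alpha^{2(t-s)}\,\frac{\gamma^2}{m}\,\mathbf{E}\lVert V_s^m\rVert_1^2 \;\le\; (L_1+L_2)^2\,\alpha^{2(t-s)}\,\frac{\gamma^2}{m}\,M_T^2,
\]
and summing the geometric series $\sum_s \alpha^{2(t-s)} = \alpha^2(1-\alpha^{2t})/(1-\alpha^2)$ produces the first term of the claimed bound.

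For $S_t^{2}$, the pointwise bound on $|R_s|$, conditioning on $V_s^m$, and a second application of Lemma~\ref{opnormrep2} with \eqref{genPhierr} give $\mathbf{E}[\lVert G_s(Y_s^m - V_s^m)\rVert_1^2 \mid V_s^m] \le (\gamma^2/m)\lVert V_s^m\rVert_1^2$, hence $\mathbf{E}|R_s| \le L_2\,\alpha^{t-s}\,(\gamma^2/m)\,M_T^2$. Summing the geometric series yields the second term of the bound; promoting this to an $L^2$ estimate (using Minkowski together with the almost-sure control on $\lVert V_s^m\rVert_1$ provided by the compression rule) gives $\sqrt{\mathbf{E}|S_t^{2}|^2} \le L_2\,\alpha\,(\gamma^2/m)\,(1-\alpha^t)/(1-\alpha)\,M_T^2$. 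Combining $S_t^1$ and $S_t^2$ via the triangle inequality in $L^2$ and taking the supremum over $f$ (all bounds above are uniform in $f$) gives $\opnorm{V_t^m - v_t}$ as stated.

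The main obstacle is extracting the clean constant $(L_1+L_2)$ in the martingale piece without picking up a spurious $1/m$ (rather than $1/\sqrt{m}$) contribution. A direct $(a+b)^2\le 2a^2+2b^2$ splitting produces an extra fourth-moment term in $\lVert G_s(Y-V)\rVert_1$ that cannot be bounded by $M_T^2$ alone. The resolution is to treat $f^\ct A_s(Y-V)$ as the \emph{exact} mean-zero part of $f^\ct\mathcal{M}_s^t(Y) - f^\ct\mathcal{M}_s^t(V)$ and to use the pointwise control on $\lVert G_s(Y-V)\rVert_1$ afforded by the specific compression schemes of Section~\ref{sec:rr} to absorb the quadratic factor into the effective Lipschitz constant.
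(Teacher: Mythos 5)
Your proposal takes a genuinely different route from the paper's, though it arrives at the same final bound. The paper decomposes each telescoping increment by conditioning, writing $\mathcal{M}_r^t(Y_r) - \mathcal{M}_r^t(V_r^m) = \bigl(\mathcal{M}_r^t(Y_r) - \mathbf{E}[\mathcal{M}_r^t(Y_r)\mid V_r^m]\bigr) + \bigl(\mathbf{E}[\mathcal{M}_r^t(Y_r)\mid V_r^m] - \mathcal{M}_r^t(V_r^m)\bigr)$; the first piece is the martingale, orthogonality is exploited there, and the $A_r$ subtraction from \eqref{cntr2} is used only to bound the second (bias) piece. You instead apply the Taylor linearization from \eqref{cntr2} directly to write $f^\ct(\mathcal{M}_s^t(Y_s^m)-\mathcal{M}_s^t(V_s^m)) = f^\ct A_s(Y_s^m-V_s^m) + R_s$ and take the linear term as your martingale. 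Both decompositions are legitimate (your $A_s$ is $V_s^m$-measurable, so $\mathbf{E}[f^\ct A_s(Y_s^m-V_s^m)\mid V_s^m]=0$), and the telescoping identity, orthogonality of increments, geometric-series summation, and final constant $(L_1+L_2)$ and $L_2$ placement all agree with the paper.

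However, there is a gap exactly where you flag it, and your proposed fix does not close it. To get the claimed constant $(L_1+L_2)$ times a second moment, you need $\mathbf{E}\bigl[\lvert f^\ct A_s(Y_s^m - V_s^m)\rvert^2\bigr] \le (L_1+L_2)^2 \alpha^{2(t-s)} \opnorm{Y_s^m - V_s^m}^2$, but your pointwise bound $\lvert f^\ct A_s(Y_s^m-V_s^m)\rvert \le (L_1 + L_2\lVert G_s(Y_s^m-V_s^m)\rVert_1)\,\alpha^{t-s}\lVert G_s(Y_s^m-V_s^m)\rVert_1$ produces, upon squaring and taking expectations, third and fourth moments of $\lVert G_s(Y_s^m-V_s^m)\rVert_1$ that are \emph{not} controlled by \eqref{genPhierr} (which is a second-moment bound). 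The ``pointwise control on $\lVert G_s(Y_s^m-V_s^m)\rVert_1$ supplied by the compression rule'' that you invoke does not exist in the form you need: even for the $\ell^1$-norm-preserving compression schemes in Section~\ref{sec:rr} one only has $\lVert G_s(Y_s^m-V_s^m)\rVert_1 \le 2\lVert V_s^m\rVert_1$ almost surely, and $\lVert V_s^m\rVert_1$ is a random quantity whose fourth moment is not bounded by $M_T^2 = \sup_r \mathbf{E}[\lVert V_r^m\rVert_1^2]$. The same difficulty appears in your $L^2$ upgrade of $S_t^2$ (``using Minkowski together with the almost-sure control''). The paper's organization avoids this specific difficulty in the first-order term: after the conditional-expectation split and a triangle inequality, the martingale part yields $\opnorm{\mathcal{M}_r^t(Y_r)-\mathcal{M}_r^t(V_r^m)}$, which condition \eqref{cntr1} alone bounds cleanly by $L_1\alpha^{t-r}\opnorm{Y_r^m-V_r^m}$ with no second-order remainder to absorb, reserving \eqref{cntr2} and the $A_r$ subtraction for the separate bias term. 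To carry your plan through you would need either a fourth-moment hypothesis on $\Phi^m_t$ or to switch to the paper's decomposition for the martingale piece.
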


Conditions \eqref{cntr1} and \eqref{cntr2} is easily verified for general linear maps $\mathcal{M} = K$ with $\alpha = \lVert K\rVert_1.$  The conditions are more difficult to verify for the power iteration map
\[
\mathcal{M}(v) = \frac{K v}{\lVert Kv \rVert_1}.
\]
A condition close to \eqref{cntr1} holds for all $v,\tilde v$ with $\lVert v\rVert_1 = \lVert \tilde v\rVert_1 =1$ but the parameter $\alpha$ in general depends on the proximity of $v$ and $\tilde v$ to the space spanned by all of the non-dominant eigenvectors (see e.g.\ \cite[Theorem~1.1]{Stewart}). 
For large enough $m$ we can ensure that all iterates $V_t^m$ remain at least some fixed distance from the space spanned by the non-dominant eigenvectors but this will often require that   $m$  grow  with $n.$  

As the following corollary establishes, when the matrix $K$ is real and non-negative we expect both matrix multiplication and power iteration to have errors independent of dimension.
\begin{corollary}\label{nonneg}
Suppose that $K$ is a real, entry-wise non-negative, irreducible, $n\times n$ matrix and that $V^m_0$ is real and non-negative with at most $m$ non-zero entries.  Then for both $\mathcal{M}(v) = Kv$ and $\mathcal{M}(v) = Kv / \lVert Kv\rVert_1,$ the bound on $ \opnorm{V_t^m - v_t}$ in Theorem \ref{ft1} is independent of dimension $n.$  Let $v_L$ and $v_R$ be the unique dominant left and right eigenvectors of $K$ with corresponding  eigenvalue  $\lambda_*.$  If $S$ is the stochastic matrix with entries $S_{ij} = \lambda_*^{-1}\, (v_L)_i\, K_{ij} / (v_L)_j$ and 
\[
\alpha =  \sup_{\substack{\lVert v\rVert_1 = 1\\ \mathbbm{1}^\tp v=0}} \lVert S v\rVert_1 
\]
then $\alpha<1$ and  the total error  for the randomized iteration \eqref{Rit} with $\mathcal{M}(v) = Kv / \lVert Kv\rVert_1$ (i.e., for randomized power iteration) as an approximation of $v_R$ is bounded by
\begin{equation}\label{nonnegtoterr}
\opnorm{V^m_t - v_R} \leq C_1\, \frac{\alpha}{1-\alpha} \frac{1}{\sqrt{m}} + C_2\, \alpha^t \,\opnorm{V^m_0 - v_R}
\end{equation}
for some constants $C_1$ and $C_2$ that depend on $ \max_j \{ (v_L)_j \}/  \min_j  \{(v_L)_j \}$, but do not otherwise depend on the iteration index, dimension, $\alpha,$ or $K$. 
\end{corollary}
The total error bound on randomized power iteration in Corollary \ref{nonneg} is a finite dimensional analogue of similar results concerning convergence of DMC and related schemes (see \cite{DelMoral:2004:FK} and the references therein).  In fact, the transformation from $K$ to $S$ in Corollary \ref{nonneg} is a finite dimensional analogue of a transformation that is essential to the efficiency of QMC in practical applications (see the discussion of importance sampling in \cite{FoulkesMitas:2001:QMC}) and that was used in \cite{Rousset:2006:QMC} to establish error bounds for a QMC scheme by an argument similar to the proof of Corollary \ref{nonneg}.

As discussed in Section \ref{sec:gen}, when $K$ is a dense matrix, the cost per iteration (measured in terms of floating point operations) of computing $\Phi^m_t(V^m_t)$ is $\mathcal{O}(n),$ while the cost of assembling the product $K\Phi^m_t(V^m_t)$ is $\mathcal{O}(mn).$  On the other hand, when $K$ has at most $q$ non-zero entries per column, the number of non-zero entries in $V_t^m$ will be at most $km$ so that the cost of computing $\Phi^m_t(V^m_t)$ is only $\mathcal{O}(km)$ and the cost of assembling $K\Phi^m_t(V^m_t)$ is only $\mathcal{O}(km).$  As a consequence of these observations and the bound in \ref{nonnegtoterr} we see that within any family of sparse (with a uniformly bounded number of non-zero entries per column) entry-wise non-negative matrices among which the parameter $\alpha$ is uniformly bounded below 1 and the ratio $ \max_j \{ (v_L)_j \}/  \min_j  \{(v_L)_j \}$ is uniformly bounded, the total cost to achieve a fixed accuracy is completely independent of dimension.


 For more general problems  one can expect the speedup over the standard deterministic power method to be roughly between a factor of $n$ and no speedup at all (it is clear that the randomized scheme can be worse than its deterministic counterpart when that method is a reasonable alternative).  Identification of more general conditions under which one should expect sublinear scaling for FRI in the particular context of  power iteration seems a  very interesting problem, but is not pursued here. 


\subsection{Bias} Even for a very general iteration the effect of randomization is evident when one considers the size of the expected error (rather than the expected size of the error).   When $\mathcal{M}(v) = K v$ and $V_t^m$ is generated by \eqref{Rit},  one can easily check that $z_t = \mathbf{E}\left[ V_t^m\right]$ satisfies the iteration $z_{t+1} = K z_t,$ i.e., $z_t = v_t.$  Even when the mapping $\mathcal{M}$ is non-linear, the expected error, $\mathbf{E}\left[ V_t^m\right] - v_t,$ is often much smaller than the error, $V_t^m - v_t,$ itself.  The following is just one simple result in this direction and demonstrates that one can often expect the bias to be $\mathcal{O}(m^{-1})$ (which should be contrasted to an expected error of $\mathcal{O}(m^{-1/2})$).  The proof is very similar to the proof of Theorem~\ref{ft1} and is omitted.
\begin{theorem}\label{bias}
Under the same assumptions  as in Theorem~\ref{ft1} and using the same notation, 
the bias at step $t\leq T$ satisfies the bound
  \[
\lVert \mathbf{E}\left[V_t^m\right] -   v_t\rVert_1  \leq  \frac{\gamma^2 L_2}{m}  \frac{\alpha  (1-\alpha^t)}{1-\alpha}M_t^2.
 \]
 \end{theorem}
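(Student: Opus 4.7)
The plan is to mimic the telescoping decomposition used for Theorem~\ref{ft1}, but track $\mathbf{E}[V_t^m]-v_t$ rather than $\opnorm{V_t^m-v_t}$. Writing $Y_r^m=\Phi^m_r(V_r^m)$ and $v_t=\mathcal{M}_0^t(v_0)$, and noting $V_{r+1}^m=\mathcal{M}_r(Y_r^m)$, the standard telescoping identity gives
\[
V_t^m-v_t=\sum\nolimits_{r=0}^{t-1}\bigl[\mathcal{M}_r^t(Y_r^m)-\mathcal{M}_r^t(V_r^m)\bigr].
\]
Split each summand as the sum of a conditional-mean-zero term $\mathcal{M}_r^t(Y_r^m)-\mathbf{E}[\mathcal{M}_r^t(Y_r^m)\mid V_r^m]$ and a conditional-bias term $\mathbf{E}[\mathcal{M}_r^t(Y_r^m)\mid V_r^m]-\mathcal{M}_r^t(V_r^m)$. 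Taking unconditional expectation annihilates the first term, leaving
\[
\mathbf{E}[V_t^m]-v_t=\sum\nolimits_{r=0}^{t-1}\mathbf{E}\bigl[\mathbf{E}[\mathcal{M}_r^t(Y_r^m)\mid V_r^m]-\mathcal{M}_r^t(V_r^m)\bigr].
\]
This is the step at which the $1/\sqrt{m}$ term in Theorem~\ref{ft1} disappears and only the second-order $1/m$ residual survives.

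Next I would use assumption \eqref{cntr2} to bound the conditional bias. Fix $f\in\mathbb{C}^n$ with $\lVert f\rVert_\infty\leq 1$. By \eqref{cntr2} there is a matrix $A_r$, measurable with respect to $V_r^m$, and a matrix $G$ satisfying \eqref{Gbnd} such that
\[
\bigl\lvert f^\ct\mathcal{M}_r^t(Y_r^m)-f^\ct\mathcal{M}_r^t(V_r^m)-f^\ct A_r(Y_r^m-V_r^m)\bigr\rvert\leq L_2\alpha^{t-r}\lVert G(Y_r^m-V_r^m)\rVert_1^2.
\]
Since $\mathbf{E}[Y_r^m\mid V_r^m]=V_r^m$ (using \eqref{Cunbiased}), the linear term $f^\ct A_r(Y_r^m-V_r^m)$ vanishes under conditional expectation given $V_r^m$. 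Thus
\[
\bigl\lvert f^\ct\bigl(\mathbf{E}[\mathcal{M}_r^t(Y_r^m)\mid V_r^m]-\mathcal{M}_r^t(V_r^m)\bigr)\bigr\rvert\leq L_2\alpha^{t-r}\,\mathbf{E}\bigl[\lVert G(Y_r^m-V_r^m)\rVert_1^2\bigm| V_r^m\bigr].
\]
Applying Lemma~\ref{opnormrep2} conditionally and then the compression bound \eqref{genPhierr} yields
\[
\mathbf{E}\bigl[\lVert G(Y_r^m-V_r^m)\rVert_1^2\bigm| V_r^m\bigr]\leq \opnorm{\Phi^m_r(V_r^m)-V_r^m\mid V_r^m}^2\leq \frac{\gamma^2}{m}\lVert V_r^m\rVert_1^2,
\]
where the conditional opnorm is defined by replacing $\mathbf{E}[\cdot]$ with $\mathbf{E}[\cdot\mid V_r^m]$ in \eqref{opnorm}.

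Finally, taking the outer expectation, summing over $r$, and then supremizing over $f$ with $\lVert f\rVert_\infty\leq 1$ gives (using $\sup_{\lVert f\rVert_\infty\leq 1}\lvert f^\ct x\rvert=\lVert x\rVert_1$ for deterministic $x$, together with $\lvert\mathbf{E}[Z]\rvert\leq \mathbf{E}[\lvert Z\rvert]$)
\[
\lVert\mathbf{E}[V_t^m]-v_t\rVert_1\leq \frac{\gamma^2 L_2}{m}\sum\nolimits_{r=0}^{t-1}\alpha^{t-r}\,\mathbf{E}\bigl[\lVert V_r^m\rVert_1^2\bigr].
\]
Evaluating the geometric sum $\sum_{r=0}^{t-1}\alpha^{t-r}=\alpha(1-\alpha^t)/(1-\alpha)$ and bounding $\mathbf{E}[\lVert V_r^m\rVert_1^2]$ by $M_T^2:=\sup_{r<T}\mathbf{E}[\lVert V_r^m\rVert_1^2]$ produces exactly the stated inequality.

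The only conceptually nontrivial point is the disappearance of the $\mathcal{O}(1/\sqrt{m})$ martingale contribution, which requires both the unbiasedness \eqref{Cunbiased} of $\Phi^m_r$ and the first-order Taylor-type expansion guaranteed by \eqref{cntr2}; everything else is essentially a repeat of the computations in the proof of Theorem~\ref{ft1}, with the $\opnorm{\cdot}$ norm replaced by the (deterministic) dual representation of $\lVert\cdot\rVert_1$.
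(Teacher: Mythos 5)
Your proof is correct and follows exactly the approach the paper intends (the paper omits the proof, stating only that it is ``very similar to the proof of Theorem~\ref{ft1}''). You reuse the telescoping decomposition from that proof, observe that taking expectations kills the martingale (conditionally-centered) term that gave the $\mathcal{O}(m^{-1/2})$ contribution, and bound the remaining conditional-bias term via \eqref{cntr2}, Lemma~\ref{opnormrep2} applied conditionally, and \eqref{genPhierr}, which reproduces the $L_2$ half of the Theorem~\ref{ft1} bound and hence the stated $\mathcal{O}(m^{-1})$ estimate.
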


\subsection{Perturbations of identity.}  When the goal is to solve ordinary or partial differential equations, stronger assumptions on the structure of $\mathcal{M}$ are appropriate.  We now consider the case in which $\mathcal{M}$ is a perturbation of the identity.  More precisely we will assume that
\begin{equation}\label{Mpert}
\mathcal{M}(v) = v + \varepsilon\, b(v).
\end{equation}
Though we will not write it explicitly, further dependence of $b$ on $\varepsilon$ is allowed as long as the assumptions on $b$ below hold uniformly in $\varepsilon.$  
In the differential equations setting $\varepsilon$ can be thought of as a time discretization parameter as in Section~\ref{sec:DMC}. 

\begin{condition}
When $\mathcal{M}$ is a perturbation of identity, it is reasonable to   strengthen our assumptions on the error made at each compression step.  The improvement stems from the fact that the mapping $\mathcal{M}$ nearly preserves the sparsity of its argument.  As we will explain in detail in the next section, if $v\in \mathcal{S}_m$ where
\[
 \mathcal{S}_m = \left\{z\in \mathbb{C}^n:  \lvert\{j:\,z_j \neq 0\}\rvert \leq m\right\}
\] 
and $w\in \mathbb{C}^n,$
then it is reasonable to assume that, for example,
\begin{equation}\label{Cpert}
\opnorm{\Phi^m_t(v+ w)  - v - w} \leq \frac{\gamma_p}{\sqrt{m}} \lVert w\rVert_1^\frac{1}{2} \lVert v+w \rVert_1^\frac{1}{2}
\end{equation}
for some constant $\gamma_p$ independent of $m$ and $n.$
\end{condition}

The following Lemma illustrates how such a bound on the compression rule can translate into 
small compression errors when $\mathcal{M}$ is a perturbation of the identity.
\begin{lemma}\label{pertlemma}  Suppose that the iterates $V^m_t$ of \eqref{Rit} remain in $\mathcal{X}\subset \mathbb{C}^n$ and that the compression rule satisfies   \eqref{Cpert} and \eqref{phibnd}.  Suppose that $\mathcal{M}(v) = v+ \varepsilon b(v)$ with $\lVert b(v)\rVert_1 \leq L (1+\lVert v\rVert_1)$ for all $v\in \mathcal{X}.$
Then for some constant $\tilde \gamma,$
\begin{equation}\label{Cpert2}
\opnorm{ \Phi^m_t(V_t^m) - V_t^m}^2 \leq    \tilde \gamma ^2  \frac{ \varepsilon }{ m } \sqrt{\mathbf{E}\left[ \lVert V_t^m \rVert_1^2\right]} \sqrt{ 1 + \mathbf{E}\left[ \lVert V_{t-1}^m\rVert_1^2\right]}
    \end{equation}
\end{lemma}

We now provide versions of Theorems~\ref{stable1} and \ref{ft1} appropriate when $\mathcal{M}$ is a perturbation of identity.  The proofs of both of these  theorems are very similar to the proofs of 
Theorems~\ref{stable1} and \ref{ft1} and are, at least in part, omitted.
First we address stability in the perturbation of identity case.
\begin{theorem}\label{stable2} Suppose that the iterates $V^m_t$ of \eqref{Rit} remain in $\mathcal{X}\subset \mathbb{C}^n$ and that the compression rule satisfies   \eqref{Cpert}, \eqref{phibnd}, and \eqref{Cunbiased}.  Suppose that $\mathcal{M}(v) = v+ \varepsilon b(v)$ with $\lVert b(v)\rVert_1 \leq L (1+\lVert v\rVert_1)$ for all $v\in \mathcal{X}.$
 Suppose further that $\mathcal{U}$ satisfies the conditions in the statement of Theorem~\ref{stable1} with the exception of the following: Now
\[
\mathcal{U}(\mathcal{M}(v)) \leq  e^{\varepsilon \alpha}\, \mathcal{U}(v) + \varepsilon R.
\]
Then
\[
\sup_{t < T/\varepsilon}\mathbf{E}\left[ \lVert V_t^m\rVert_1^2\right] \leq 
\beta R \Biggl[ \varepsilon + \frac{ \exp\Bigl[T\Bigl(\alpha + \frac{ \beta \tilde \gamma^2 \sigma}{2 m}\Bigr)\Bigr]-1}{ \alpha + \frac{ \beta \tilde \gamma^2 \sigma}{2 m}}\Biggr] 
+ \beta \exp\biggl[T\biggl(\alpha + \frac{ \beta \tilde \gamma^2 \sigma}{2 m}\biggr)\biggr]  \mathcal{U}(V_0^m)
\]
where $\tilde \gamma$ is the constant appearing in \eqref{Cpert2} and $\beta$ and $\sigma,$ 
are defined in  the statement of Theorem~\ref{stable1}.
\end{theorem}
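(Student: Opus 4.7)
The plan is to mirror the proof of Theorem~\ref{stable1} almost step for step, with two essential modifications: the growth condition on $\mathcal{U}$ now carries a factor $e^{\varepsilon\alpha}$ rather than $\alpha$, and the compression error is controlled via the sharper bound \eqref{Cpert2} rather than the generic bound \eqref{genPhierr}.  Writing $Y_{t-1}^m = \Phi^m_{t-1}(V_{t-1}^m)$ so that $V_t^m = \mathcal{M}(Y_{t-1}^m)$, the new growth hypothesis gives $\mathcal{U}(V_t^m) \leq e^{\varepsilon\alpha}\mathcal{U}(Y_{t-1}^m) + \varepsilon R$.  Taylor-expanding $\mathcal{U}(Y_{t-1}^m)$ around $V_{t-1}^m$, taking conditional expectations so the gradient term is killed by unbiasedness of $\Phi^m_{t-1}$, and controlling the quadratic term by $(C/2)\opnorm{Y_{t-1}^m - V_{t-1}^m}^2$ via Lemma~\ref{opnormrep2} proceeds exactly as in the proof of Theorem~\ref{stable1}.

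The crucial point, and the whole reason \eqref{Cpert2} was introduced, is that inserting the generic bound $\opnorm{Y_{t-1}^m - V_{t-1}^m}^2 \leq (\gamma^2/m)\mathbf{E}[\lVert V_{t-1}^m\rVert_1^2]$ here would yield a per-step multiplier $e^{\varepsilon\alpha}(1 + C\beta\gamma^2/(2m))$; iterated $T/\varepsilon$ times this blows up as $\varepsilon \to 0$.  Using \eqref{Cpert2} instead supplies the missing factor $\varepsilon$, giving a per-step bound proportional to $\varepsilon\sqrt{\mathbf{E}[\lVert V_{t-1}^m\rVert_1^2]}\sqrt{1 + \mathbf{E}[\lVert V_{t-2}^m\rVert_1^2]}$.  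After applying the AM-GM inequality $\sqrt{a}\sqrt{1+b} \leq (a + 1 + b)/2$ and $\lVert v\rVert_1^2 \leq \beta\mathcal{U}(v)$ this becomes a linear recursion with per-step multiplier $\kappa := e^{\varepsilon\alpha}\bigl(1 + C\beta\gamma^2\varepsilon/(2m)\bigr) \leq \exp\bigl[\varepsilon(\alpha + C\beta\gamma^2/(2m))\bigr]$, so that $\kappa^t \leq \exp[T(\alpha + C\beta\gamma^2/(2m))]$ for $t \leq T/\varepsilon$.  Summing the geometric series of forcing terms and rewriting $\varepsilon R\sum_{s<t}\kappa^s = \varepsilon R(\kappa^t-1)/(\kappa-1)$ using $\kappa - 1 \geq \varepsilon(\alpha + C\beta\gamma^2/(2m))$ produces the main term in the stated bound, with the additive $\varepsilon$ collecting the $\mathcal{O}(\varepsilon)$ residual from the linearization of $\kappa$ and the constant piece of the AM-GM split.

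The main obstacle is the nonlinear appearance of $\mathbf{E}[\lVert V_r^m\rVert_1^2]$ through the square roots in \eqref{Cpert2}, because this mixes $\mathbf{E}[\mathcal{U}(V_{t-1}^m)]$ with $\mathbf{E}[\mathcal{U}(V_{t-2}^m)]$ on the right-hand side of the recursion.  The cleanest way around this is a bootstrap on $S_T = \sup_{r < T/\varepsilon} \mathbf{E}[\mathcal{U}(V_r^m)]$: bound $\sqrt{\mathbf{E}[\lVert V_{t-1}^m\rVert_1^2]}\sqrt{1 + \mathbf{E}[\lVert V_{t-2}^m\rVert_1^2]}$ uniformly by $\tfrac{1}{2}(1 + 2\beta S_T)$, derive the scalar recursion for $\mathbf{E}[\mathcal{U}(V_t^m)]$, take the supremum, and solve the resulting linear inequality in $S_T$.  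A minor secondary issue is that at $t = 0,1$ the index $t-2$ does not make sense; one simply interprets $\mathbf{E}[\lVert V_{-1}^m\rVert_1^2]$ as $0$ (or absorbs the first two iterates into the initial-condition term), which contributes only an $\mathcal{O}(\varepsilon)$ correction already accounted for by the leading $\varepsilon$ in the bound.
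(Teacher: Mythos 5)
Your overall plan is the right one, and the two most important observations are correctly identified: the growth hypothesis $\mathcal{U}(\mathcal{M}(v)) \leq e^{\varepsilon\alpha}\mathcal{U}(v) + \varepsilon R$ replaces the multiplicative constant $\alpha$, and the entire purpose of \eqref{Cpert2} is to supply the extra factor of $\varepsilon$ so that the compression noise injected at each of $\mathcal{O}(1/\varepsilon)$ steps accumulates to an $\mathcal{O}(1)$ rather than $\mathcal{O}(1/\varepsilon)$ effect. The Taylor expansion, the cancellation of the gradient term by unbiasedness, and the conversion to $\opnorm{\cdot}$ via Lemma~\ref{opnormrep2} all replay exactly as in Theorem~\ref{stable1}.

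However, the handling of the mixed indices is not correct as written, and the two parts of your argument are in tension. In the first paragraph you claim that AM--GM produces a first-order affine recursion with per-step multiplier $\kappa = e^{\varepsilon\alpha}(1+\varepsilon\beta\gamma^2 C/(2m))$; but AM--GM gives $\tfrac12(\beta a_{t-1} + 1 + \beta a_{t-2})$ with $a_r := \mathbf{E}[\mathcal{U}(V_r^m)]$, so the recursion is genuinely second order and does not, by itself, have a single multiplier $\kappa$. The bootstrap in your last paragraph then does something different: replacing the entire product $\sqrt{\mathbf{E}[\lVert V_{t-1}^m\rVert_1^2]}\sqrt{1+\mathbf{E}[\lVert V_{t-2}^m\rVert_1^2]}$ by the constant $\tfrac12(1+2\beta S_T)$ turns the $\gamma^2 C/(2m)$ contribution into part of the \emph{forcing} term, leaving a geometric series whose ratio is only $e^{\varepsilon\alpha}$, not $\kappa$. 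Iterating and taking suprema then gives a self-consistency inequality $S_T \leq A + B S_T$ with $B \sim (\beta\gamma^2 C/(2m))(e^{T\alpha}-1)/\alpha$, which (i) only closes under an implicit largeness restriction on $m$ that is not in the theorem statement, and (ii) produces a bound of the form $A/(1-B)$ rather than the stated exponential $\exp[T(\alpha+\beta\gamma^2 C/(2m))]$. The clean fix is to work with the running maximum $M_t := \max_{0\le r\le t} a_r$ rather than the global supremum $S_T$: AM--GM together with $\lVert v\rVert_1^2\le\beta\mathcal{U}(v)$ gives $\sqrt{a}\sqrt{1+b} \le \beta M_{t-1} + \tfrac12$, and since the resulting affine upper bound on $a_t$ is $\geq M_{t-1}$ (as $\kappa\ge 1$ and the forcing is non-negative) it also bounds $M_t$. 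This yields a genuine first-order recursion $M_t \le \varepsilon R' + \kappa M_{t-1}$ with $\kappa = e^{\varepsilon\alpha}(1+\varepsilon\beta\tilde\gamma^2 C/(2m))$, which you can iterate as in your first paragraph, and multiplying by $\beta$ at the end produces the stated exponential form valid for all $m$ (with the additive $\varepsilon$ absorbing the $\mathcal{O}(\varepsilon)$ slack from the ``$+\tfrac12$'' forcing and from $\kappa-1\ge\varepsilon(\alpha+\beta\tilde\gamma^2 C/(2m))$). The paper omits the proof, but this is the route its stated bound implies; your first paragraph essentially wanted to use it, and your last paragraph should be replaced by it.
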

What is important about the statement of Theorem~\ref{stable2} is that the bound remains stable as $\varepsilon$ decreases despite the fact that the set being supremized over is increasing.  Under the assumptions in the theorem (which are only reasonable when $\mathcal{M}$ is a perturbation of the identity) one can expect that the iterates can be bounded over $\mathcal{O}\left( \varepsilon^{-1}\right)$ iterations uniformly in $\varepsilon.$

The following theorem   interprets the result of Theorem~\ref{ft1} when $\mathcal{M}$ is a perturbation of identity.
One might expect that, over $\mathcal{O}(\varepsilon^{-1})$ iterations, $\mathcal{O}\left( \sqrt{\varepsilon}\right)$ 
errors made during the compression step would accumulate and lead to an error of $\mathcal{O}(\varepsilon^{-1/2}).$  Indeed, this is exactly what would happen if the errors made in the compression step were systematic (i.e., if the compression bias was $\mathcal{O}\left( \sqrt{\varepsilon}\right)$).  Fortunately, when the compression rule satisfies the consistency criterion \eqref{Cunbiased} the errors self average and their effect on the overall error of the scheme is reduced.  As mentioned above, this phenomenon played a role in the structure of the result in Theorem~\ref{ft1} and its proof, but its role is more crucial in Theorem~\ref{ft2} which provides uniform in $\varepsilon$ bounds on the error of \eqref{Rit} over $\mathcal{O}(\varepsilon^{-1})$ iterations.   Without the reduction in the growth of the error with $t$ provided by self-averaging it would not be possible to achieve an error bound over $\mathcal{O}\left(\varepsilon^{-1}\right)$ iterations that is stable as $\varepsilon$ decreases.
\begin{theorem}\label{ft2}  Suppose that the iterates $V^m_t$ of \eqref{Rit} remain in $\mathcal{X}\subset \mathbb{C}^n$ and that the compression rule satisfies   \eqref{Cpert}, \eqref{phibnd}, and \eqref{Cunbiased}.  Suppose that $\mathcal{M}(v) = v+ \varepsilon b(v)$ with $\lVert b(v)\rVert_1 \leq L (1+\lVert v\rVert_1)$ for all $v\in \mathcal{X}.$ Fix a real number $T>0$ and assume that,  for some real number $\alpha$ and some constants $L_1$ and $L_2$ and for every pair of integers $s\leq r \leq T/\varepsilon,$ for every vector $f\in \mathbb{C}^n$ with $\lVert f\rVert_\infty \leq 1$ there are matrices $G$ and $G'$ in $ \mathbb{C}^{n\times n}$ satisfying $\lVert G\rVert_{\infty,*}\leq 1$ and a bounded, measurable $\mathbb{C}^{n\times n}$ valued function $A$ so that
 \begin{equation}\label{cntr1eps}
\sup_{v,\tilde v\in \mathcal{X}}\frac{\lvert f^\ct \mathcal{M}_s^{r}(v) - f^\ct \mathcal{M}_s^{r}(\tilde v)\rvert}{ \lVert Gv - G \tilde v\rVert_1
}
 \leq L_1 e^{-\varepsilon \alpha (r-s)}
 \end{equation}
 and 
  \begin{equation}\label{cntr2eps}
\sup_{v,\tilde v\in \mathcal{X}} \frac{\lvert f^\ct \mathcal{M}_s^{r}(v) - f^\ct \mathcal{M}_s^{r}(\tilde v) - f^\ct A(\tilde v) (v -\tilde v) \rvert}{ \lVert Gv - G \tilde v\rVert_1^2
}
 \leq L_2 e^{-\varepsilon \alpha (r-s)}.
 \end{equation}
 Then the error at step $t\leq T/\varepsilon$ satisfies the bound
  \begin{multline*}
 \opnorm{V_t^m - v_t} \leq   \frac{\tilde \gamma(L_1+ L_2)}{\sqrt{m}}\left(e^{-2\alpha T}\mathbf{E}\left[ \lVert V_0^m\rVert_1^2\right]
  + \frac{1-e^{-2\alpha T}}{2 \alpha} M_T \sqrt{1+M_T^2}\right)^{1/2} \\     +   \frac{\tilde \gamma ^2L_2 }{m} \left( e^{-2\alpha T}\mathbf{E}\left[ \lVert V_0^m\rVert_1^2\right]
  +\frac{1-e^{-\alpha T}}{\alpha}M_T \sqrt{1+M_T^2}\right).
 \end{multline*}
where $M_T^2 = \sup_{r< T/\varepsilon}  \mathbf{E}\left[ \lVert V_r^m\rVert_1^2\right].$
\end{theorem}

%
%

Though the error established in the last claim is stable as $\varepsilon$ decreases, we have mentioned in Section~\ref{sec:pert} that when $\mathcal{M}$ is a perturbation of identity, by using iteration \eqref{Ritpert} instead of \eqref{Rit}, one might be able to obtain errors that vanish as $\varepsilon$ decreases (keeping $m$ fixed).
This is the subject of Theorem~\ref{ft3} below which, like Theorem~\ref{ft2} relies crucially on self-averaging of the compression errors.
Note that iteration \eqref{Ritpert} typically requires $\mathcal{O}\left(n\right)$ operations per iteration and storage of length $n$ vectors.
We have the following theorem demonstrating the decrease in error with $\varepsilon$ in this setting.
\begin{theorem}\label{ft3}
Suppose that the iterates $V^m_t$ of \eqref{Ritpert} remain in $\mathcal{X}\subset \mathbb{C}^n$ and that   \eqref{genPhierr} holds.  Under the same assumptions on $\mathcal{M}$ as in Theorem~\ref{ft2},  the error at step $t\leq T/\varepsilon$ satisfies the bound
  \begin{equation*}
 \sup_{t\leq T/\varepsilon} \opnorm{V_t^m - v_t} \leq \frac{ \sqrt{\varepsilon} \gamma }{\sqrt{m}}(L_1+ L_2)L_1 \left(\frac{1-e^{-2\alpha T}}{2 \alpha}\right)^\frac{1}{2} e^{\alpha \varepsilon} M_T      +  \frac{  \varepsilon \gamma^2 L_2 L_1^2}{m} \left( \frac{1-e^{-\alpha T}}{\alpha}\right)
   M_T^2 
 \end{equation*}
where $M_T^2 = \sup_{r< T/\varepsilon}  \mathbf{E}\left[ \lVert V_r^m\rVert_1^2\right].$
\end{theorem}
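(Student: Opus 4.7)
The plan is to mimic the proofs of Theorems~\ref{ft1} and~\ref{ft2}, exploiting the fact that for iteration \eqref{Ritpert} the one-step residual $V_{r+1}^m - \mathcal{M}(V_r^m)$ carries an explicit factor of $\varepsilon$.  Writing $Y_r^m = \Phi_r^m(V_r^m)$ and using $\mathcal{M}(v) = v + \varepsilon b(v)$ one computes
\[
V_{r+1}^m - \mathcal{M}(V_r^m) \;=\; \bigl(V_r^m + \mathcal{M}(Y_r^m) - Y_r^m\bigr) - \mathcal{M}(V_r^m) \;=\; \varepsilon\bigl[b(Y_r^m) - b(V_r^m)\bigr],
\]
so the analogue of the per-step compression error $\Phi_r^m(V_r^m) - V_r^m$ appearing in the bound of Theorem~\ref{ft2} carries an extra $\varepsilon$ relative to iteration \eqref{Rit}.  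First I would telescope the error as $V_t^m - v_t = \sum_{r=0}^{t-1}[\mathcal{M}_{r+1}^t(V_{r+1}^m) - \mathcal{M}_{r+1}^t(\mathcal{M}(V_r^m))]$, split each summand into a martingale increment (the summand minus its conditional expectation given the natural filtration through step $r$) and a conditional bias, use orthogonality of the martingale increments to eliminate cross-terms in the $\opnorm{\cdot}^2$ norm, and apply \eqref{cntr1eps} and \eqref{cntr2eps} to $\mathcal{M}_{r+1}^t$ exactly as in the proof of Theorem~\ref{ft2}.  This produces the intermediate estimate
\[
\opnorm{V_t^m - v_t} \leq (L_1+L_2)\Bigl(\sum_{r=0}^{t-1} e^{-2\varepsilon\beta(t-r-1)} \opnorm{V_{r+1}^m - \mathcal{M}(V_r^m)}^2\Bigr)^{1/2} + L_2\sum_{r=0}^{t-1} e^{-\varepsilon\beta(t-r-1)} \opnorm{V_{r+1}^m - \mathcal{M}(V_r^m)}^2.
\]

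The next step is to convert the residual into a compression error.  Applying \eqref{cntr2eps} to a single application of $\mathcal{M}$ yields a matrix $A_r'$, measurable with respect to $V_r^m$, such that $\mathcal{M}(Y_r^m) - \mathcal{M}(V_r^m) = A_r'(Y_r^m - V_r^m)$ up to a quadratically small remainder; since $\mathcal{M} = I + \varepsilon b$, the linearization $A_r'$ differs from the identity by an $\varepsilon$-small operator, giving $\opnorm{\varepsilon[b(Y_r^m) - b(V_r^m)]} \leq \varepsilon L_1 \opnorm{Y_r^m - V_r^m}$.  Combined with \eqref{genPhierr} this yields $\opnorm{V_{r+1}^m - \mathcal{M}(V_r^m)}^2 \leq \varepsilon^2 L_1^2\gamma^2 \mathbf{E}[\lVert V_r^m\rVert_1^2]/m$.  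Substituting and using $\sum_{r=0}^{t-1}e^{-\varepsilon\beta(t-r-1)} \leq (1-e^{-\beta T})/(\beta\varepsilon)$ for $t \leq T/\varepsilon$ (and similarly $(1-e^{-2\beta T})/(2\beta\varepsilon)$ for the squared version) gives the stated bound:  the factor $\varepsilon^2$ from the residual combines with the factor $\varepsilon^{-1}$ from the Riemann sum of exponentials to produce a martingale term scaling as $\sqrt{\varepsilon/m}$ and a quadratic bias term scaling as $\varepsilon/m$.

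The main obstacle I anticipate is the passage from $\opnorm{\varepsilon[b(Y_r^m) - b(V_r^m)]}$ to $\varepsilon L_1 \opnorm{Y_r^m - V_r^m}$.  A direct appeal to \eqref{cntr1eps} alone controls only the sum $(Y_r^m - V_r^m) + \varepsilon[b(Y_r^m) - b(V_r^m)] = \mathcal{M}(Y_r^m) - \mathcal{M}(V_r^m)$ by $L_1\|G(Y_r^m - V_r^m)\|_1$, giving an effective Lipschitz constant for $b$ of order $1/\varepsilon$, which would destroy the $\varepsilon$ scaling; the correct route instead goes through \eqref{cntr2eps}, identifying $A_r' - I$ as the $\varepsilon$-small linearization of $\varepsilon b$ at $V_r^m$ and treating the quadratic remainder as a second-order correction absorbed into the $L_2$-constants in the bound.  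The second essential ingredient, inherited from Theorem~\ref{ft2}, is dynamic self-averaging:  without the martingale cancellation of cross-terms, the $O(\sqrt{\varepsilon/m})$ per-step errors would accumulate linearly over the $O(\varepsilon^{-1})$ iterations to produce an $O(\varepsilon^{-1/2})$ total error rather than the $O(\sqrt{\varepsilon/m})$ claimed.
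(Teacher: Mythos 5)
Your proof follows the same route as the paper's: telescope $V_t^m - v_t$ into the composite-map increments $\mathcal{M}_{r+1}^t(V_{r+1}^m) - \mathcal{M}_{r+1}^t(\mathcal{M}(V_r^m))$, split each into a martingale part plus a conditional bias, apply \eqref{cntr1eps} and \eqref{cntr2eps}, and sum the geometric series; your intermediate estimate and the observation that $V_{r+1}^m - \mathcal{M}(V_r^m) = \varepsilon\,[\,b(Y_r^m)-b(V_r^m)\,]$ are exactly the paper's.

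You have correctly put your finger on the delicate step. The paper states that the Lipschitz bound $\opnorm{b(Y_r^m)-b(V_r^m)} \leq L_1 \opnorm{Y_r^m - V_r^m}$ follows ``from \eqref{cntr1eps} and Lemma~\ref{opnormrep2},'' but, as you note, \eqref{cntr1eps} bounds only differences of the compositions $\mathcal{M}_s^r = (I+\varepsilon b)^{r-s}$, and isolating $b(v)-b(\tilde v) = \varepsilon^{-1}\bigl[(\mathcal{M}(v)-v)-(\mathcal{M}(\tilde v)-\tilde v)\bigr]$ in this way only produces an $\mathcal{O}(1/\varepsilon)$ Lipschitz constant, which would annihilate the $\sqrt{\varepsilon}$ and $\varepsilon$ factors in the conclusion. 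Your proposed alternative via \eqref{cntr2eps}, however, does not close the gap either: \eqref{cntr2eps} asserts the \emph{existence} of a linearization matrix $A$ with a quadratically small remainder, but gives no quantitative control over $A-I$; the claim that $A_r'-I$ is an $\varepsilon$-small operator with norm bounded by $\varepsilon L_1$ is precisely the missing ingredient. What the argument --- yours and the paper's --- actually uses is a Lipschitz hypothesis on $b$ itself in the $\opnorm{\cdot}$-adapted form: for every $f$ with $\lVert f\rVert_\infty \leq 1$ there is a $G$ satisfying \eqref{Gbnd} such that $\lvert f^\ct\bigl(b(v)-b(\tilde v)\bigr)\rvert \leq L_1 \lVert G(v-\tilde v)\rVert_1$ for all $v,\tilde v\in\mathcal{X}$. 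This is a natural assumption for an $\varepsilon$-perturbation of the identity and is surely what the paper intends, but it should be stated explicitly; granted that hypothesis, the rest of your argument (including the geometric-series/Riemann-sum estimates) yields the stated bound.
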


\section{Compression rules}\label{sec:rr}

In this section we give a detailed description of the compression rule used in our numerical simulations as well as several others, and  an analysis of the accuracy of those schemes.  
Programmed efficiently, and assuming that $v$ has exactly $n$ nonzero entries, all of the schemes we discuss in this section will require at most $\mathcal{O}(n)$ floating point operations  including the generation of as few as one uniform random variate  and $\mathcal{O}(n+m\log n)$ floating point comparisons.
It is likely that better compression schemes are possible, for example by incorporation of ideas from \cite{HairerWeare:2014:TDMC}.  The reader should note that in this section $n$ represents the number of non-zero entries in the input vector $v$ of the compression rule and not the dimension associated with a particular problem (which may be much larger).  In our implementation of \eqref{Rit}, when the underlying matrix is sparse (so that an $\mathcal{O}(m)$ work/storage per iteration method is possible) we store only the indices and values of the non-zero entries in any vector (including matrix columns).

We begin by discussing the simple choice
\begin{equation}\label{Phi3}
\left(\Phi^m_t(v)\right)_j = \begin{cases}
N_j \frac{\lVert v\rVert_1}{m} \frac{v_j}{\lvert v_j\rvert} & \text{if}\; \lvert v_j \rvert >0,\\
 0 & \text{if}\; \lvert v_j \rvert = 0,
 \end{cases}
\end{equation}
where each $N_j$ is a random, non-negative, integer with  
\begin{equation}\label{Nconsist}
\mathbf{E}\left[ N_j\mid v\right] = \frac{m \lvert v_j\rvert}{\lVert v\rVert_1}
\end{equation}
so that $\mathbf{E}\left[\Phi^m_t(v)\right] = v$ and the consistency condition \eqref{Cunbiased} is satisfied.   Notice that if we define a collection of  $N = \sum\nolimits_{j=1}^n N_j$ integers $\{X^{(j)}\}$ so that exactly $N_j$ elements of the collection are equal to $j,$ then the output of a compression scheme of this type can be written
\[
\Phi^m_t(v) = \frac{\lVert v\rVert_1}{m} \sum\nolimits_{j=1}^N  \frac{v_{X^{(j)}}}{\lvert v_{X^{(j)}}\rvert}\,\mathbf{e}_{X^{(j)}}
\]
where $\mathbf{e}_j$ is the $j$th standard basis vector in $\mathbb{R}^n.$  When the input vector $v$ is real  $\Phi^m_t(v)$ is a  finite dimensional analogue of the DMC resampling step described in Section \ref{sec:DMC}.  In the infinite dimensional setting the efficiency of DMC is due to the application of an  integral operator, $e^{-\varepsilon \mathcal{H}},$ to a finite sum of delta functions in place of a more general function.  Likewise, the gain in efficiency of an FRI scheme over deterministic methods is a consequence of replacement of a general vector $v$ in the product $Kv$ by a sparse approximation, $\Phi^m_t(v).$   Though we will deviate somewhat from the form in \eqref{Phi2} to arrive at the compression scheme used in the numerical simulations reported on in Section \ref{sec:ex}, essential elements of \eqref{Phi2} will be retained.

%

Notice that the consistency condition \eqref{Nconsist} leaves substantial freedom in the specification of the joint distribution of the $N_j.$  For example, one simple choice might be to select the vector of $N_j$ from the multinomial distribution with parameters $m$ and $( \lvert v_1\rvert, \lvert v_2\rvert, \dots, \lvert v_n\rvert)/\lVert v\rVert_1.$  This choice would result in a compression scheme satisfying \eqref{genPhierr},  \eqref{phibnd}, and \eqref{Cunbiased}, as required in Theorems \ref{stable1} and \ref{ft1} in Section \ref{sec:conv}.  However, it would not satisfy \eqref{Cpert} and would be a particularly poor choice when $\mathcal{M}$ is a perturbation of the identity.  In fact, this choice would lead to unstable schemes as the size of the perturbation decreases.  
An alternative, much more accurate  choice  that will lead  below (in Lemma~\ref{dmcerr}) to a compression scheme satisfying \eqref{Cpert} is to select the $N_j$ independently with
\begin{equation}\label{dmcNj}
\mathbf{P}\left(N_j =  \left\lceil \frac{m \lvert v_j\rvert}{\lVert v\rVert_1}\right\rceil \right) = 
1 -\mathbf{P}\left(N_j =  \left\lfloor \frac{m \lvert v_j\rvert}{\lVert v\rVert_1}\right\rfloor \right) = \frac{m \lvert v_j\rvert}{\lVert v\rVert_1}- \left\lfloor \frac{m \lvert v_j\rvert}{\lVert v\rVert_1}\right\rfloor.
\end{equation}
Note that this rule randomly rounds $m\lvert v_j\rvert/\lVert v\rVert_1$ to a nearby integer and satisfies \eqref{Nconsist}.
The compression rule \eqref{Phi3} with \eqref{dmcNj} has already appeared above in \eqref{Phi2}. When $v$ has exactly $n$ non-zero entries, the corresponding cost to assemble $\Phi^m_t$  by this rule is  $\mathcal{O}(n)$ operations.  


However, we have emphasized repeatedly in this article that the cost savings at each iteration of  an FRI scheme is entirely do to sparsity introduced by our compressions.  And the results of the last section reveal that compression schemes with large variance will typically give rise to FRI schemes with large error.  In this regard the compression rule in \eqref{Phi3} is clearly suboptimal.  In particular, for any entry $j$ for which ${m \lvert v_j\rvert}/{\lVert v\rVert_1}>1,$ the $j$th component of $\Phi^m_t(v)$ is non-zero with probability 1 so that the error $\left(\Phi^m_t(v)\right)_j - v_j$ is not compensated by an increase in sparsity.  To improve the scheme we can introduce a rule for exactly preserving sufficiently large entries of $v.$  To that end, let $\sigma$ be a permutation  of $\{1,2,\dots,n\}$ so that the  elements of $v_\sigma$ have decreasing magnitude (i.e., $v_\sigma$ is a rearrangement of the entries of $v$ so that, for each $j,$
$\lvert v_{\sigma_j}\rvert \geq  \lvert v_{\sigma_{j+1}}\rvert$) and let
\begin{equation}\label{tau}
\tau_v^m = \min\left\{0\leq \ell \leq m:\, \sum\nolimits_{j=\ell+1}^{n} \lvert v_{\sigma_j} \rvert \geq (m-\ell)\lvert v_{\sigma_{\ell+1}} \rvert \right\}.
\end{equation}
All of the compression schemes we consider will preserve entries $v_{\sigma_1},v_{\sigma_2},\dots, v_{\sigma_{\tau_v^m}}$ exactly.  In fact, they will have the basic structure in Algorithm~\ref{cr}. 
\begin{algorithm}[ht]\label{cr}
 \KwData{$v\in \mathbb{C}^n$ with all nonzero entries, $m\in \mathbb{N}$}
 \KwResult{$V=\Phi^m(v)\in \mathbb{C}^n$ with at most $m$ nonzero entries}
 $\tau_v^m = 0$\;
 $V=0$\;
 $r = \lVert v\rVert_1 / m$\;
 $\sigma_1 = \arg\max_i\{\lvert v_i\rvert\}$\; 
 \While{$\lvert v_{\sigma_{\tau_v^m+1}}\rvert \geq r$ }{
 $ \tau_v^m = \tau_v^m+1$\;
  $V_{\sigma_{\tau_v^m}} = v_{\sigma_{\tau_v^m}}$\; 
  $v_{\sigma_{\tau_v^m}} = 0$\;
  $r = \lVert v\rVert_1 / (m-{\tau_v^m})$\;
  $\sigma_{{\tau_v^m}+1} = \arg\max_i\{\lvert v_i\rvert\}$;
  }
 For each $j$ let $N_j$ be a non-negative random integer with   $\mathbf{E}\left[ N_j\,|\,v\right] = (m-\tau_v^m)\lvert v_j\rvert/ \lVert v\rVert_1$\;
Finally,  for $j\in\{1,2,\dots,n\}\setminus\{\sigma_1,\sigma_2,\dots,\sigma_{\tau_v^m}\},$ set 
  \[
  V_j = N_{j} \frac{v_{j}\lVert v\rVert_1}{\lvert v_{j}\rvert(m-\tau_v^m)}
  \]
  (Note that $v$ here may have fewer non-zero entries than it did upon input)\;
 \caption{A simple compression rule.}
\end{algorithm}

To justify preservation of the  $\tau_v^m$ largest entries in our compression schemes,  we need the following lemma.
\begin{lemma}\label{taubnd}  $\tau_v^m$ satisfies
 \[
  \sum\nolimits_{j=\tau_v^m+1}^n \lvert v_{\sigma_j}\rvert \leq \frac{ m - \tau_v^m}{m} \lVert v\rVert_1.
  \]
  \end{lemma}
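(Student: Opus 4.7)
The plan is to unwind the definition of $\tau_v^m$ into a sharp recursion on the tail sums and then telescope. Let me set up convenient notation: write $a_\ell = \sum_{j=\ell+1}^n |v_{\sigma_j}|$ for the tail sum starting at position $\ell+1$, and $b_\ell = |v_{\sigma_{\ell+1}}|$ for the largest remaining magnitude. Since the entries of $v_\sigma$ are arranged in decreasing magnitude, $b_\ell$ is exactly the max of the tail, and $a_{\ell+1} = a_\ell - b_\ell$. The statement to prove is simply $a_{\tau_v^m} \leq \frac{m-\tau_v^m}{m} a_0$.

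The key observation I would exploit is the \emph{minimality} in the definition \eqref{tau} of $\tau_v^m$: for every $\ell$ with $0\leq \ell < \tau_v^m$, the defining inequality fails, so
\[
a_\ell < (m-\ell)\, b_\ell, \qquad \text{equivalently}\qquad b_\ell > \frac{a_\ell}{m-\ell}.
\]
Substituting into the one-step recursion gives, for each $\ell < \tau_v^m$,
\[
a_{\ell+1} = a_\ell - b_\ell < a_\ell \Bigl( 1 - \tfrac{1}{m-\ell}\Bigr) = a_\ell\cdot \frac{m-\ell-1}{m-\ell}.
\]

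Now I would telescope this inequality from $\ell=0$ up to $\ell = \tau_v^m - 1$. The product is itself telescoping,
\[
\prod_{\ell=0}^{\tau_v^m - 1} \frac{m-\ell-1}{m-\ell} = \frac{m-\tau_v^m}{m},
\]
so $a_{\tau_v^m} \leq a_0\cdot (m-\tau_v^m)/m$, which is precisely the claim (with $a_0=\lVert v\rVert_1$). The boundary case $\tau_v^m=0$ is trivial since the empty product equals $1$.

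I don't anticipate any serious obstacle; the only subtle point is reading off the correct direction of the inequality from the definition of $\tau_v^m$ and noticing that it lines up exactly with the algorithm's while-loop exit criterion. Once the recursion $a_{\ell+1} < a_\ell(m-\ell-1)/(m-\ell)$ is established on the range $\ell < \tau_v^m$, the telescoping is routine.
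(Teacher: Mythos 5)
Your proof is correct and mirrors the paper's argument: both proceed by induction on $\ell$, using the failure of the defining condition at every $\ell < \tau_v^m$ to obtain the one-step bound $a_{\ell+1} \le a_\ell\,(m-\ell-1)/(m-\ell)$, which then chains (you telescope the product, the paper runs the induction step by step) to the claimed inequality. Your indexing is slightly cleaner in that you compare $a_{\ell+1}$ directly to $b_\ell$ and so never need the sorted-order fact $\lvert v_{\sigma_\ell}\rvert \ge \lvert v_{\sigma_{\ell+1}}\rvert$ that the paper's ``combining'' step quietly invokes, but the underlying recursion is the same.
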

\noindent Note that for any compression scheme satisfying an error bound of the form \eqref{genPhierr} for a general vector $v\in \mathbb{C}^n,$ the error resulting from application of the compression scheme after exact preservation of the largest $\tau_v^m$ entries is bounded by
\[
\frac{\gamma}{\sqrt{m-\tau_v^m}} \sum\nolimits_{j=\tau_v^m+1}^n \lvert v_{\sigma_j}\rvert
\]
which, by Lemma \ref{taubnd} is itself bounded by 
\[
\gamma \frac{\sqrt{m-\tau_v^m}}{m} \lVert v\rVert_1
\]
and is always an improvement over \eqref{genPhierr}. 

Lemma \ref{dmcerr} summarizes the properties of the compression scheme resulting from preserving the largest $\tau_v^m$ entries of an input vector $v\in \mathbb{C}^n$ exactly and applying \eqref{Phi3} with \eqref{dmcNj} with $m$ replaced by $m- \tau_v^m$ to the remaining entries.  In particular, Lemma \ref{dmcerr} implies that the compression scheme satisfies conditions \eqref{genPhierr}, \eqref{phibnd}, and  \eqref{Cpert}.
\begin{lemma}\label{dmcerr}  Let $v,w\in \mathbb{C}^n$ and assume that $v$ has at most $m$ non-zero entries.
For $\Phi_t^m$ defined by Algorithm~\ref{cr} with \eqref{dmcNj} 
\begin{equation}\label{dmccomperr2}
\opnorm{ \Phi_t(v+w) - v-w}\leq \sqrt{2} \frac{ \lVert w\rVert_1^\frac{1}{2} \lVert v+w\rVert_1^\frac{1}{2}}{\sqrt{m}}.
\end{equation}
Concerning the size of the resampled vector we have the bound
\[
\mathbf{E}\left[ \lVert \Phi_t^m(v+w)\rVert_1^2\right] \leq \lVert v+w \rVert_1^2 + 2\frac{ \lVert v+w\rVert_1 \lVert w\rVert_1}{m}.
\]
Finally, if $\tau_{v+w}^m>0$ then $\mathbf{P}\left[ \Phi_t(v+w) = 0\right] = 0.$  
If $\tau_{v+w}^m= 0$ then 
\[
\mathbf{P}\left[ \Phi_t(v) = 0\right] \leq \left( \min\left\{\frac{ \lVert w\rVert_1}{\lVert v+w\rVert_1},\frac{1}{e} \right\}\right)^m.
\]
\end{lemma}

In practice this compression scheme would need to be modified to avoid the possibility that $\Phi^m_t(v) = 0.$   
As Lemma~\ref{dmcerr} demonstrates, the probability of this event  is  extremely small.  The issue can be avoided by simply sampling $\Phi_t^m(v)$ until $\Phi_t^m(v)\neq 0,$ i.e., sample $\Phi_t^m(v)$ conditioned on the event $\left\{\Phi_t^m(v)\neq 0\right\},$ and multiplying each entry of the resulting vector by $\mathbf{P}\left[\Phi_t^m(v)\neq 0\right]$
which can be computed exactly. 
A more significant issue is that, while Lemma~\ref{dmcerr} does guarantee that the compression scheme just described satisfies \eqref{Cpert}, the scheme does not guarantee that the number of non-zero entries in $\Phi^m_t(v)$ does not exceed $m$ as required by Lemma~\ref{pertlemma} in the last section.  The results of that section can be modified accordingly or the compression scheme can be modified so that $\Phi^m_t(v)$ has no more than $m$ non-zero entries (by randomly selecting additional entries to set to zero).  Instead of pursuing these modifications here we move on to describe the compression scheme used to generate the results reported in the next section.


Like the compression scheme considered in Lemma~\ref{dmcerr}, the compression scheme used to generate the results in Section \ref{sec:ex} begins with an application of Algorithm~\ref{cr}.  To fully specify the scheme we need to specify the rule used to generate the $N_j$ random variables for $j\in \{\sigma_k:\, k>\tau_v^m\}$.    For $k=1,2,\dots, m-\tau_v^m,$ 
define the random variables 
\begin{equation}\label{sysU}
U^{(k)} = \frac{1}{m-\tau_v^m}\left( k-1 + U\right).
\end{equation}
where  $U$ is a single uniformly chosen random variable on the interval (0,1).
We then set
\begin{equation}\label{sysNj}
N_{\sigma_j} = \Bigl\lvert\Bigl\{k:\, U^{(k)} \sum\nolimits_{j=\tau_v^m+1}^{n} \lvert v_{\sigma_j}\rvert\in I_j\Bigr\}\Bigr\rvert
\end{equation}
where we have defined the intervals $I_{\tau_v^m+1} = \bigl[0,\,\lvert v_{\sigma_{\tau_v^m+1}}\rvert\bigr)$ and, for $i = \tau_v^m+2,\dots,n,$
\begin{equation}\label{I}
I_i = \Bigl[\sum\nolimits_{j=\tau_v^m+1}^{i-1} \lvert v_{\sigma_j}\rvert,\, \sum\nolimits_{j=\tau_v^m+1}^{i} \lvert v_{\sigma_j}\rvert\Bigr).
\end{equation}
As for the rule in \eqref{dmcNj}, the variables $N_j$ generated according to \eqref{sysNj} satisfy 
\[
\mathbf{E}\left[ N_j \mid v\right] = \frac{(m-\tau_v^m)\lvert v_j\rvert}{\sum\nolimits_{i=\tau_v^m+1}^{n} \lvert v_{\sigma_i}\rvert}
\]
 so that the  compression mapping $\Phi^m_t$ resulting from use of \eqref{sysNj} with Algorithm \ref{cr} satisfies 
\eqref{Cunbiased}.   From the definition of $\tau_v^m,$ we know that for $j>\tau_v^m,$
$(m-\tau_v^m)\lvert v_j\rvert \leq \sum\nolimits_{i=\tau_v^m+1}^{n} \lvert v_{\sigma_i}\rvert$  which implies by \eqref{sysNj} that $N_j\in \{0,1\}$.  

Unlike \eqref{dmcNj}, \eqref{sysNj} results in $N_j$ that are correlated and satisfy $\sum\nolimits_{j=\tau_v^m+1}^n N_j = m-\tau_v^m$ exactly (not just in expectation).  The corresponding compression scheme exactly preserves the $\ell_1$-norm of $v$ and results in a vector $\Phi^m_t(v)$ with at most $m$ non-zero entries.  Note that this compression scheme, like the one considered in Lemma~\ref{dmcerr} only requires knowledge of set $\{\sigma_1,\sigma_2,\dots,\sigma_{\tau_v^m}\}$ and does not require sorting of the entire input vector $v.$
Perhaps the most obvious advantage of this scheme over the one that generates the $N_j$ according to \eqref{dmcNj} is that the compression scheme using \eqref{sysNj} only requires a single random variate per iteration (compared to up to $n$ for \eqref{dmcNj}).  Depending on the cost of evaluating $\mathcal{M}(V_t^m),$ this advantage could be substantial.  

Notice that, if we replace the $U^{(k)}$ in \eqref{sysU} by independent random variables uniformly chosen in $(0,1),$ then the $N_j$ would be distributed multinomially, which we have already mentioned is a poor choice.  Relative to multinomial $N_j,$ the increased correlation between the $U^{(k)}$ defined in \eqref{sysU} results in substantially decreased variance for the $N_j$ but also increased covariance.    An unfortunate consequence of this increased covariance is that the analogue of the error bound \eqref{dmccomperr2} from Lemma~\ref{dmcerr} does not hold for $N_j$ generated according to \eqref{sysNj}.  In fact, the rule in \eqref{sysNj} is very closely related to the systematic resampling scheme used frequently in the context of sequential Monte Carlo (see e.g.\ \cite{deFreitasDoucetGordon:2005:book}) which is well known to fail to converge for certain sequences of input vectors.\footnote{If applied to the same vector $v$ as $m$ increases, the scheme does converge}
 Nonetheless, in unreported numerical comparisons  we found that the rule \eqref{sysNj}  resulted  in FRI schemes with significantly lower error than for \eqref{dmcNj}.

\section{Numerical tests}\label{sec:ex}

In this section we describe the application of the framework above to particular matrices arising in (i) the computation of the per-spin partition function of the 2D Ising model, (ii) the spectral gap of a diffusion process governing the evolution of a system of up to five, $2$-dimensional particles (i.e., up to ten spatial dimensions), and (iii) a  free energy landscape for that process.  The corresponding numerical linear algebra problems are, respectively, (i) computing the dominant eigenvalue/eigenvector of matrices up to size $10^{15} \times 10^{15},$  (ii) computing the second largest eigenvalue/eigenvector of matrices up to size $10^{20} \times 10^{20},$  and (iii) solving a linear system involving exponentiation of matrices up to size $10^{20} \times 10^{20}.$  Aside from sparsity, these matrices have no known readily exploitable structure for computations.

All but the first test problem involve matrices with entries of any sign.  As we learned in Section \ref{sec:conv}, we can often expect much better error scaling with dimension when applying FRI to problems involving matrices with all non-negative entries.  The numerical results in this section suggest that dramatic speedups are possible even for more general matrices.

The reader may wonder why we consider random compressions instead of simple thresholding, i.e., a compression rule in which, if $\sigma_j$ is the index of the $j$th largest (in magnitude) of $v,$ $v_{\sigma_j}$ is simply set to zero for all $j>m$ (the resulting vector can be normalized to preserve $\ell^1$-norm or not).  In the rest of this paper we will refer to methods using such a compression rule as truncation-by-size (TbS) schemes.  TbS schemes have been considered by many authors (see e.g.\ \cite{Fuchs:1989:TrMa,SchaefferCaflisch:2013:SparsePDE,OzolinsLai:2013:Sparse2,OzolinsLai:2014:Sparse3}) and are a natural approach.  Note however that the error (if the compression is not normalized), 
\[
\left\lvert \sum\nolimits_{j>m} \bar{f}_{\sigma_j} v_{\sigma_j}\right\rvert,
\]
 for the thresholding compression can be as large as $\Vert f\rVert_\infty \lVert v\rVert_1 \left(1- m/n\right)$  which only vanishes if $m$ is increased faster than $n.$  In contrast, the random compressions above can have vanishing error even when $n$ is infinite.  This observation is key to understanding the substantial reduction in error we find for our fast randomized scheme over TbS  in numerical results presented in this section.  In our first test example the TbS scheme  converges to a value far  from a high quality estimate of the true value (a relative error of $98\%$ compared to $8\%$ for FRI).  In the subsequent examples  TbS iteration appears to converge (in the iteration index $t$) to substantially different values for each fixed choice of $m$  whereas FRI shows much more consistent behavior in $m.$  Moreover, in practice we observe no cost savings per iteration for TbS over FRI.

Finally we comment that, in order for the FRI approach to yield significant performance improvements, one must use an efficient implementation of matrix by sparse vector multiplication.  In the examples below we list the $i,j$ pairs for which the product $K_{ij}v_j$ is nonzero, then sort the products according to the $i$-index and finally, add the products with common $i$.  This is a simple and sub-optimal solution.   More details can be found in the example code  available in \cite{FRIforIsingCode}.

\subsection{A transfer matrix eigenproblem}
In this example we find the dominant eigenvalue $\lambda_*$ of the transfer matrix $K$ of the $2$-dimensional 
$\ell$-spin Ising model.  This eigenvalue is the per-spin partition function of the infinite spin Ising model, i.e.,
\[
\lambda_*(T,B) = \lim_{\ell\rightarrow \infty} \left(\sum\nolimits_{\sigma} e^{\frac{1}{T}\sum\nolimits_{|(i,j)-(i',j')|=1} \sigma_{ij} \sigma_{i' j'} + B \sigma_{ij}}\right)^{1/\ell}
\]
where $\sigma_{ij}\in \{-1,1\}$ and the sum in the exponent is over pairs of indices on a square $2$-dimensional, periodic lattice with $\ell$ sites.  The outer sum is over all $2^\ell$ possible values of $\sigma$, and for larger $\ell$,  one cannot possibly compute it directly.  The matrix $K$ is $2^\ell\times 2^\ell$.  For example, in the case $\ell=3$, 
\begin{equation}\label{Kpart}
K = 
\begin{bmatrix}
a& & & & a^{-1} & & &\\
b& & & & b^{-1} & & &\\
 & a & & & & a^{-1} & &\\ 
& b & & & & b^{-1} & &\\ 
& & b & & & & b^{-1} & \\
& & c & & & & c^{-1} & \\
& & & b & & & & b^{-1} \\
& & & c & & & & c^{-1} 
\end{bmatrix}
\end{equation}
where 
\[
a = e^{(2-B)/T},\quad b= e^{-B/T},\quad c = e^{-(2+B)/T}.
\]
We therefore also cannot hope to apply the power method (or its relatives) directly to $K$ when $\ell$ is large.  In our experiments we set $T=2.2$, $B=0.01$, and  $\ell=50$ so that $n=2^\ell >10^{15}$,   We apply both the FRI and TbS, $\mathcal{O}(1)$ schemes to computing $\lambda_*$ as well as to computing the 
sum of all components of the corresponding eigenvector, $v_*$ (normalized to have sum equal to 1), with index greater than or equal to $2^{49}$, i.e.,
\[
f_* = \sum\nolimits_{j\geq n/2} (v_*)_j.
\]

Knowledge of the partition function $\lambda_*$ as a function of temperature $T$ and field strength $B$ allows one to determine useful quantities such as the average magnetization (sum of spin values) and to diagnose phase transitions \cite{Fuchs:1989:TrMa}. 
Our choice to estimate $\lambda_*$ and $f_*$ is motivated in part by the fact that these quantities can be approximated accurately by the corresponding values for smaller Ising systems.  We will compare our results to those for the 24-spin Ising model which we can solve by standard power iteration.  For an effective specialized method for this problem see \cite{ParlettHeng:1992:Ising}.  A simple, educational implementation of Fast Randomized Iteration applied to this problem can be found here \cite{FRIforIsingCode}.

\begin{figure}[!htb]
\centerline{
\includegraphics[height=22em]{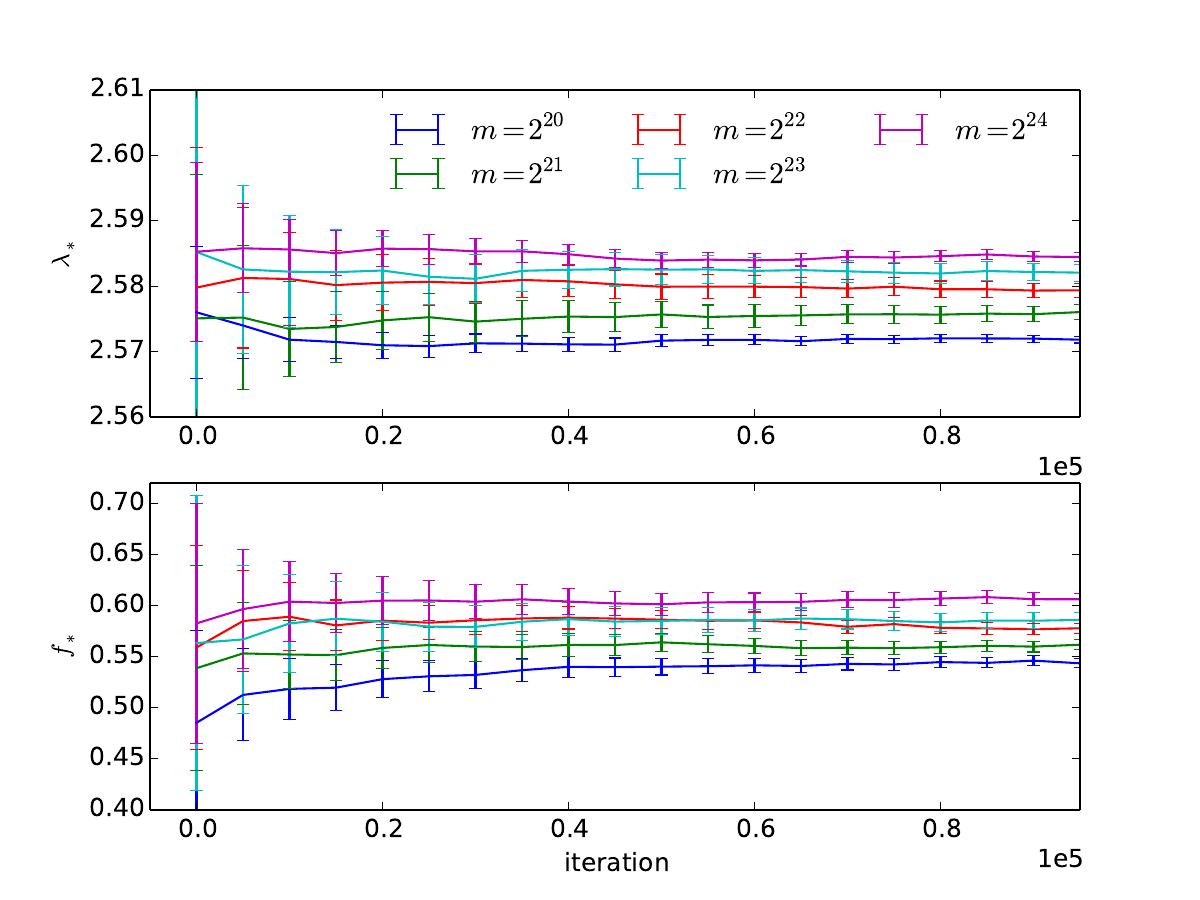}}
\caption{{\bf Top.} Trajectory averages of the approximation,  $\Lambda_t^m$, of the partition function for the 50-spin Ising model with $B=0.01$ and $T=2.2$, with $95\%$ confidence intervals\protect\footnotemark[8]
 as computed by the FRI method with $m=2^k$ for $k=20,\, 21,\, 22,\, 23$, and $24$.  The best (highest $m$) estimate for $\lambda_*$ is 2.584
a difference of roughly $0.5\%$ from the value for the 24-spin Ising model.  {\bf Bottom.} Corresponding trajectory averages of the approximation, $F_t^m$, of the total weight of all components of $v_*$ with index  greater than or equal to $2^{49}$ with $95\%$ confidence intervals for the FRI method.  The best (highest $m$) estimate for $f_*$ is 0.606, a difference of roughly $8\%$ from the value for the 24-spin model.}\label{IsingFRIaves}
\end{figure}

\begin{figure}[!htb]
\centerline{
\includegraphics[height=22em]{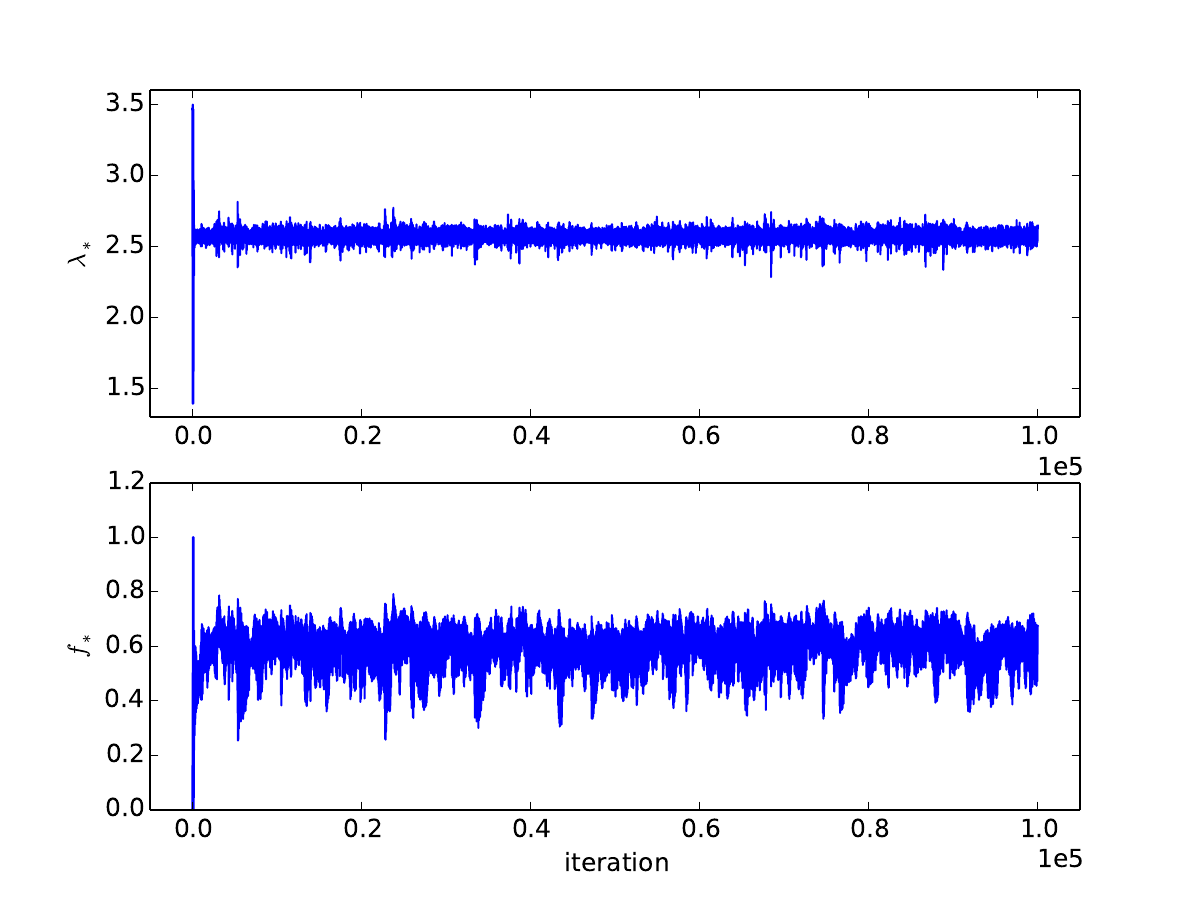}}
\caption{{\bf Top.} Trajectory  of the approximation, $\Lambda_t^m$, of the partition function for the 50-spin Ising model,  for the FRI method with $m=2^{24}$. The approximate integrated autocorrelation time for $\Lambda_t^m$ is 20.5 iterations.  {\bf Bottom.} Corresponding trajectory of  $F_t^m$ as computed by  the FRI method.  The approximate integrated autocorrelation time for $F_t^m$ is 274 iterations.}
\label{IsingFRItraj}
\end{figure}

\begin{figure}[!htb]
\centerline{
\includegraphics[height=22em]{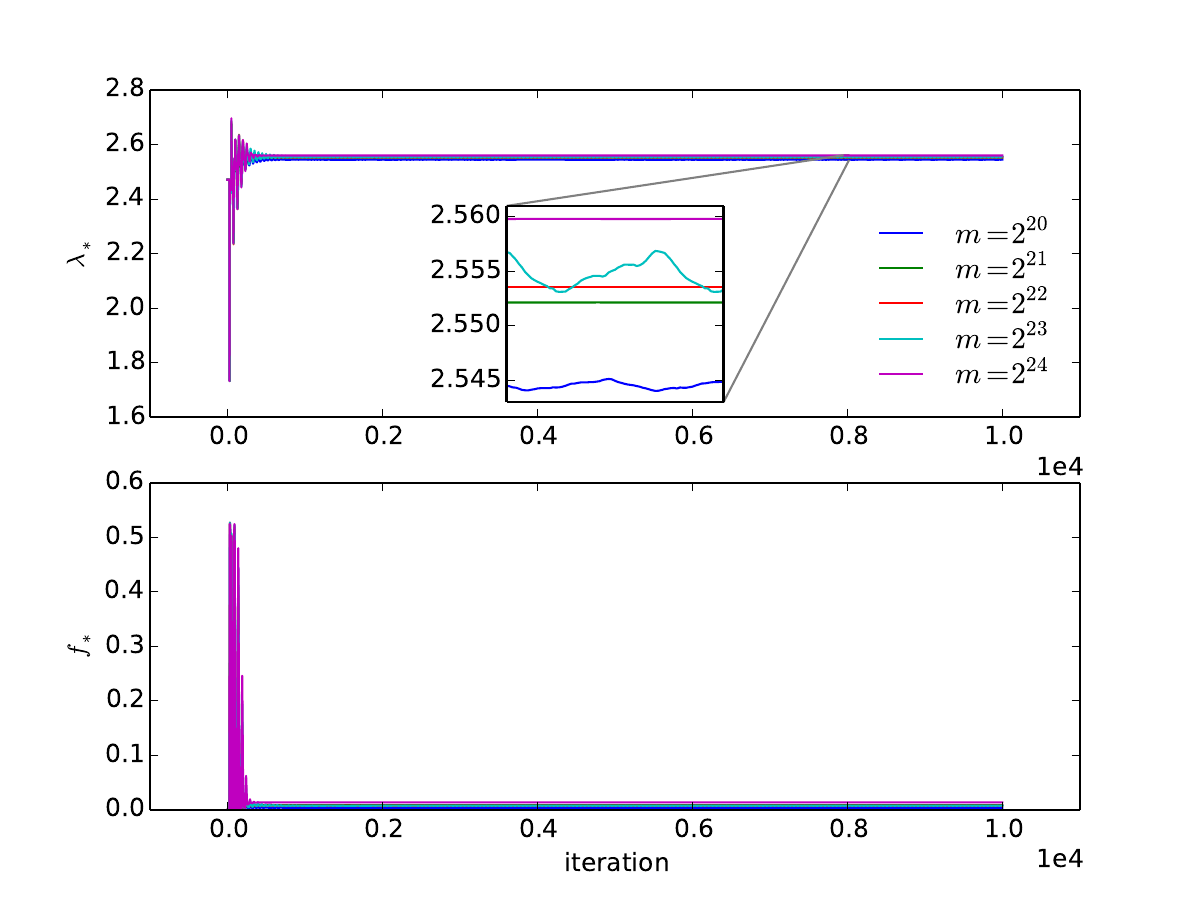}}
\caption{{\bf Top.} Trajectory  of the approximation, $\Lambda_t^m$, of the partition function for the 50-spin Ising model,  for the TbS method with $m=2^{24}$.  The best (highest $m$) approximation is $\lambda_*\approx 2.545$, a difference of about $2\%$ from the value for the 24-spin Ising model.  {\bf Bottom.} Corresponding trajectory of  $F_t^m$ as computed by the TbS method. The best (highest $m$) approximation is $f_*\approx 0.014$, a difference of almost $98\%$ from the value for the 24-spin model.}
\label{IsingTrunc}
\end{figure}

\begin{figure}[!htb]
\centerline{
\includegraphics[height=22em]{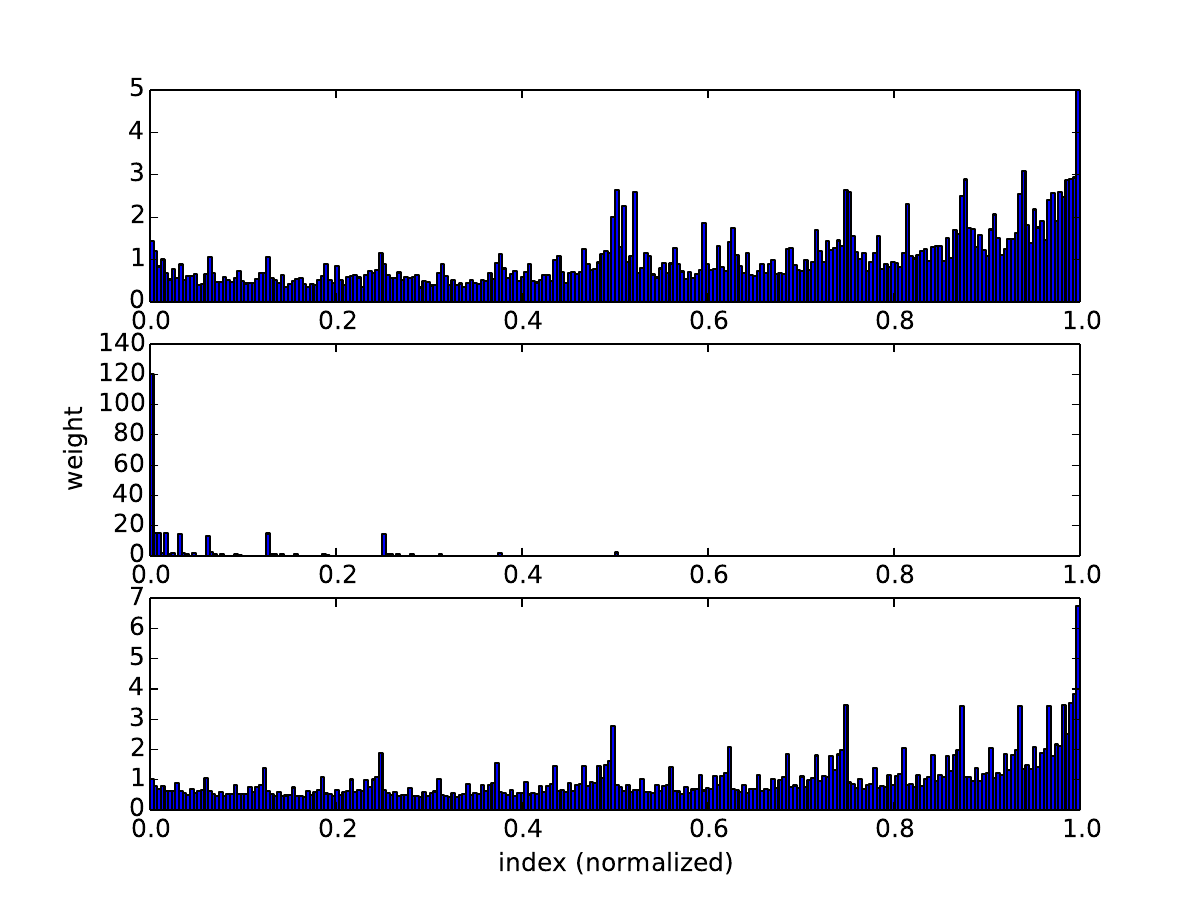}}
\caption{{\bf Top.} Sums of the values of the approximation, $V_t^m$, of the dominant eigenvector of the 50-spin Ising transfer matrix with $B=0.01$ and $T=2.2$, at $t= 10^5$ over $256$ intervals of equal size out of the $2^{50}$ total indices for 
the FRI method with $m=2^{24}$. {\bf Middle.} Corresponding sums for the
TbS method. {\bf Bottom.} Exact eigenvector for the 20-spin Ising model for qualitative comparison.}
\label{IsingHists}
\end{figure}

In Figure~\ref{IsingFRIaves}
we report the trajectory averages of the approximations $\Lambda_t^m$ and $F_t^m$ generated by the FRI scheme (iteration \eqref{random2} using Algorithm~\ref{cr}) with $m=2^{20},\, 2^{21},\, 2^{22},\, 2^{23}$, and $2^{24}$ and $10^5$ total iterations.  The best (highest $m$) approximation of $\lambda_*$ is $2.584$ and the best approximation of $f_*$ is 0.606.  The results for the 24-spin Ising problem are $\lambda_*\approx  2.596$ and $f_*\approx 0.658$ a difference of roughly $0.5\%$ and $8\%$ from the respective approximations generated by the FRI method.  
In Figure~\ref{IsingFRItraj} we plot the corresponding trajectories of $\Lambda_t^m$ and $F_t^m$.  These plots strongly suggest that the iteration equilibrates  rapidly (relative to the total number of iterations).  Indeed, we estimate the integrated autocorrelation times\footnote{According to the central limit theorem for Markov processes (assuming it holds), for large $t$ the variance of the trajectory average of $\Lambda_t^m$ should be $\sigma^2 \tau / t$ where $\sigma^2$ is the infinite $t$ limit of the variance of $\Lambda_t$ and $\tau$ is the integrated autocorrelation time of $\Lambda_t^m.$  Roughly, it measures the number of iterations between independent $\Lambda_t^m.$ } of $\Lambda_t^m$ and $F_t^m$ to be $20.5$ and $274$ respectively.  This in turn suggests that one could achieve a dramatic speedup by running many parallel and independent copies (replicas) of the simulation and averaging the resulting estimates of $\lambda_*$ and $f_*$ though we have not taken advantage of this here.

\footnotetext[8]{We use the term ``confidence intervals'' loosely.  We plot confidence intervals for the values $\lim_{t\rightarrow \infty}\mathbf{E}\left[ \Lambda^m_t\right]$ and $\lim_{t\rightarrow \infty }\mathbf{E}\left[ F_t^m\right]$ (i.e., for finite $m$) and not for  $\lambda_*$ and $f_*.$  In other words, our confidence intervals do not account for bias resulting from a finite choice of $m.$
}

Figure~\ref{IsingTrunc} reports the analogous trajectories (see \eqref{trajave} below) of $\Lambda_t^m$ and $F_t^m$ as generated by iteration \eqref{random2} with the TbS scheme (iteration \eqref{random2} using truncation-by-size) and the same values of $m$.  The best (highest $m$) TbS approximation of $\lambda_*$ is 2.545 and the best TbS approximation of $f_*$ is 0.014, a difference of   almost  $2\%$ and $98\%$ respectively.  In Figure~\ref{IsingHists} we plot the sums of the values of the approximation, $V_t^m$, of the dominant eigenvector of the Ising transfer matrix at $t= 10^5$ over $256$ intervals of equal size out of the $2^{50}$ total indices.  The top plot represents $V_t^m$ as generated by the FRI method and the middle plot represents $V_t^m$ as generated by the TbS approach.  The TbS iteration has converged to a vector with nearly all of weight concentrated on very low indices.  The bottom plot in Figure~\ref{IsingHists} represents the dominant eigenvector for the 24-spin Ising transfer matrix.  The qualitative agreement with the realization of $V_t^m$ as generated by the FRI method is much stronger than agreement with the result of the TbS method.

\begin{remark}
In this problem we  compute the dominant eigenvalue $\lambda_*$ and a projection $f_*$ of the dominant eigenvector $v_*$ of the matrix $K$ defined in equation \eqref{Kpart}  using the FRI in conjunction with the power method.
Using the trajectory averages
\begin{equation}\label{trajave}
\bar\Lambda_t^m = \frac{1}{t} \sum_{s=1}^t \Lambda_t^m
\qquad
\text{and}
\qquad
\bar F_t^m = \frac{1}{t} \sum_{s=1}^t F_t^m
\end{equation}
to estimate $\lambda_*$ and $f_*$ would seem strange had the iterates $\Lambda_t^m$ and $F_t^m$ been generated by the deterministic power method (we have not reported trajectory averages for the deterministic TbS approach).  However, for finite $m$ we do not expect $\Lambda_t^m$ or $F_t^m$ to converge to $\lambda_*$ and $f_*$ as $t$ increases.  Rather we expect that the distribution of $\Lambda_t^m$ and $F_t^m$ will converge to some distribution roughly centered around $\lambda_*$ and $f_*$ respectively.  Though in our convergence results we have not addressed the ergodicity of the Markov process $V_t^m,$ one would expect that reasonable functions of $V_t^m$ such as $\Lambda_t^m$ and $F_t^m$ satisfy a law of large numbers so that, for very large $t,$ the trajectory averages $\bar \Lambda_t^m$ and $\bar F_t^m$ differ from $\lambda_*$ and $f_*$ only by a systematic error (i.e., they converge to the limit of the expectations of $\Lambda_t$ and $F_t^m$ respectively).
\end{remark}

\subsection{A PDE eigenproblem}
For given functions $b(x)$ and $\sigma(x)$ with values 
$\mathbb{R}^n$ and $\mathbb{R}^n\times \mathbb{R}^r$, the backwards Kolmogorov operator
\begin{equation}\label{BK}
L f =  b^{\textsc{t}} D f + \frac{1}{2} \operatorname{trace}\left(\sigma \sigma^{\textsc{t}} D^2 f \right) 
\end{equation}
is the generator of the diffusion process 
\begin{equation}\label{sde}
dX(t) = b(X(t))\,dt + \sigma(X(t))\, dW(t)
\end{equation}
where $W(t)$ is an $r$-dimensional Brownian motion, $Df$ is the vector of first order derivatives of $f$, and $D^2f$ is the matrix of its second order derivatives.  The operator $L$ governs the evolution of moments of $X(t)$ in the sense that
\[
\frac{d}{dt} \mathbf{E}_x\bigl[ f(X(t))\bigr] \Bigr\vert_{t=0} = Lf(x)
\]
(the subscript on the expectation indicates that $X_0=x$).  Note that constant functions are in the kernel of $L$.  The non-trivial eigenfunctions of $L$ all correspond to negative eigenvalues.  The magnitude of the greatest of these negative eigenvalues is the spectral gap and characterizes the rate of convergence of expectations such as $\mathbf{E}_x\bigl[f(X(t))\bigr]$ to their equilibrium (large $t$) values.

In this subsection we consider estimation of the largest negative eigenvalue of $L$ for $b=-DV$ with
\[
V(x^{(1)},\dots,x^{(\ell)}) =  \frac{1}{2}\sum_{j=1}^\ell \cos(2 \pi x^{(i)}_1) \cos(2 \pi x^{(j)}_2) 
+ 2\sum_{j=1}^\ell  \sum_{k=j+1}^\ell  \cos(\pi (x^{(j)}_1-x^{(k)}_1)) \cos(\pi (x^{(j)}_2-x^{(k)}_2))
\]
for $x^{(j)}=(x^{(j)}_1,x^{(j)}_2) \in [-1,1)\times [-1,1)$.  The diffusion coefficient, $\sigma(x)$, is fixed as $\sqrt{2}$.
The function $V$ is the potential energy for a periodic system of $\ell$, 2D-particles, each subject to both an external force as well as a nonlinear spring coupling the particles together.  \eqref{sde} is a model of the dynamics of that system of particles (in a high friction limit).

The equation $Lg_* = \lambda_* g_*$ is first projected onto a Fourier spectral basis, i.e., we assume that
\[
g_*(\vec{x}) = \sum\nolimits_{\vec{\alpha}\in \mathbb{Z}^{2\ell}_N\\
} v({\vec{\alpha}})\, e^{i\pi \vec{\alpha}^\ct \vec{x}}
\]
where $\vec{\alpha} = (\alpha^{(1)},\dots,\alpha^{(\ell)})$ with $\alpha^{(j)} = (\alpha^{(j)}_1,\alpha^{(j)}_1)$,  and the symbol $\mathbb{Z}^{2\ell}_N$ is used to indicate that both $\alpha^{(j)}_1$ and $\alpha^{(j)}_1$ are integers with magnitude less than $N$.  

Suppose that $\hat L$ is the corresponding spectral projection of $L$ (which, in this case, is real).  The matrix $\hat L$ can be decomposed into a sum of diagonal (corresponding to the second order term in $L$) and non-diagonal term (corresponding to first order term in $L$), i.e.,
\[
\hat L = A + D.
\]
\jqw{In this problem the eigenvalues are real and we are trying to find the largest non-zero eigenvalue instead of the eigenvalue with largest magnitude.  We must first transform $\hat L$ so that the largest eigenvalues of $\hat L$ corresponds to the magnitude dominant eigenvalues of the transformed matrix.
As we mentioned in Section~\ref{sec:pert} this can be accomplished using the  matrix obtained from a discrete-in-time approximation of the ODE }
\[
\frac{d}{dt} y = \hat L y
\]
i.e., by exponentiating the matrix $t\hat L$ for very large $t.$
For example, for sufficiently small $\varepsilon>0$, the eigenvalue, $\mu$, of largest magnitude, of the matrix
\begin{equation}\label{cn}
K = e^{\frac{1}{2} \varepsilon D} (I+ \varepsilon \hat L)e^{\frac{1}{2} \varepsilon D}
\end{equation}
is, to within an error of order $\varepsilon^2$, $1+\varepsilon \lambda_*$ where $\lambda_*$ is the eigenvalue of $\hat L$ of largest real part (in our case the eigenvalues are real and non-positive). 
We will apply our iteration schemes to $K$.
By fixing $v_t({\vec{0}}) = 0$ we can guarantee that the approximate solutions all have vanishing integral over $[-1,1)^{2\ell}$, ensuring that the iteration converges to an approximation of the desired eigenvector/value pair 
(instead of to $v(\vec{0}) = 1$, $v(\vec{\alpha})=0$ if $\vec{\alpha}
\neq \vec{0}$).

\begin{remark}
In this problem, rather than estimating the dominant eigenvector of $K,$ our goal is estimate the second largest (in magnitude) eigenvalue of $K.$  Given that we know the largest eigenvalue of $K$ is $1$ with an eigenvector that has value one in the component corresponding to $\vec{\alpha} = \vec{0}$ and zeros in all other components,  we can therefore exactly orthogonalize the iterates $V_t^m$ with respect to the dominant eigenvalue at each iteration (by fixing $V_t^m(\vec{0}) = 0$). We may view this as using FRI in conjunction with a simple case of orthogonal iteration.
\end{remark}

We compare the FRI and TbS approaches  with $N=51$ for the four- and five-particle systems ($\ell = 4,5$).  The corresponding total count of real numbers needed to represent the solution in the five-particle case is $101^{10}\approx 10^{20}$ so only the $\mathcal{O}(1)$ scheme is reasonable.  For $h$ we choose a value of $10^{-3}$.  Our potential $V$ is chosen so that the resulting matrix $\hat L$ (and therefore also $K$) is sparse and its entries are computed by hand.  For a more complicated $V$, the entries of $\hat L$ might have to be computed numerically on-the-fly  or might not be computable at all.  Our ability to efficiently compute the entries of $\hat L$ will be strongly effected by the choice of basis.  For example, if we use a finite difference approximation of $L$ then the entries of $\hat L$ can be computed easily.  On the other hand, if the solution is reasonably smooth, the finite difference approximation will converge much more slowly than an approximation (like the spectral approximation) that more directly incorporates properties of the solution (regularity in this case).

Figure~\ref{PtclsGapAves} plots the trajectory averages over $10^5$ iterations for $\Lambda_t^m$ in the $\ell=4$ case generated by  the FRI method (iteration \eqref{random2} using Algorithm~\ref{cr}) along with corresponding trajectories of $\Lambda_t^m$ as generated by the TbS approach (iteration \eqref{random2} using truncation-by-size).  We present results for both methods  with $m=1,\, 2,\, 3$, and $4\times 10^4$.  Observe that the results from the FRI method appear to have converged on one another while the results generated by the TbS approach show no signs of convergence.  The best (highest $m$) estimate of the eigenvalue generated by FRI is $-2.31$ and the best estimate generated by TbS is $-2.49$.   Figure~\ref{PtclsGap40Traj} plots the trajectory of $\Lambda_t^m$ in the five particle ($\ell=5$) case as generated by the FRI method with $m=10^6$
along with its trajectory average (neglecting the first 500 iterations) of about $-1.3$.  Note that $\Lambda_t^m$ appears to reach its statistical equilibrium rapidly relative to the $2\times 10^3$ total iterations.   Again, the rapid equilibration suggests that statistical error could be removed by averaging over many shorter trajectories evolved in parallel.  


\begin{figure}[!htb]
\centerline{
\includegraphics[height=22em]{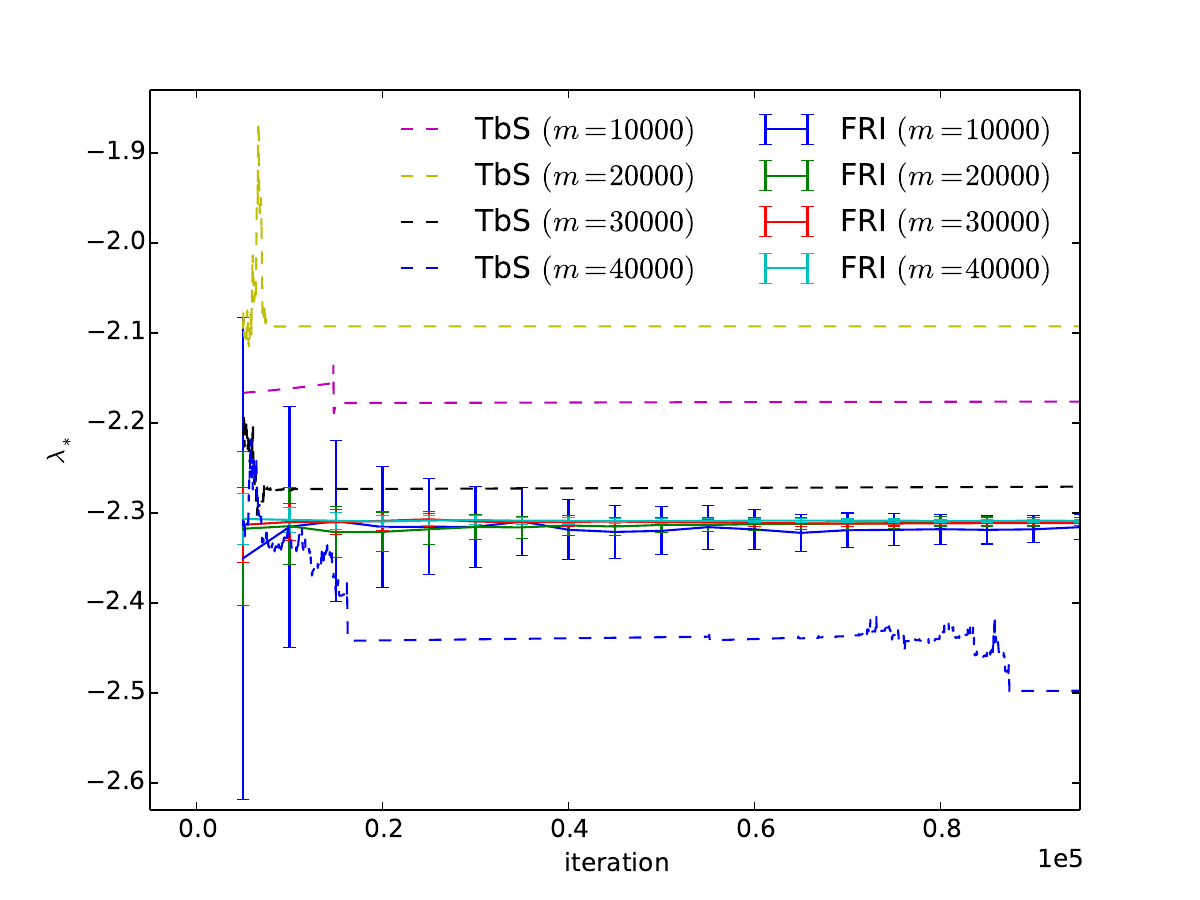}}
\caption{Trajectory averages of the approximation, $\Lambda_t^m$, of the largest negative eigenvalue of a backwards Kolmogorov operator for a four 2D-particle ($8$-dimensional) system, with $95\%$ confidence intervals for the FRI method with $m=1,\,2,\,3$, and $4\times 10^4$.   The operator is discretized using a Fourier basis with $101$ modes per dimension for a total of more than $10^{16}$ basis elements (half that after taking advantage of the fact that the desired eigenvector is real).  The step-size parameter $h$ is set to $10^{-3}$. Also on this graph, trajectories of $\Lambda_t^m$  for the TbS method for the same values of $m$.}
\label{PtclsGapAves}
\end{figure}


\begin{figure}[!htb]
\centerline{
\includegraphics[height=22em]{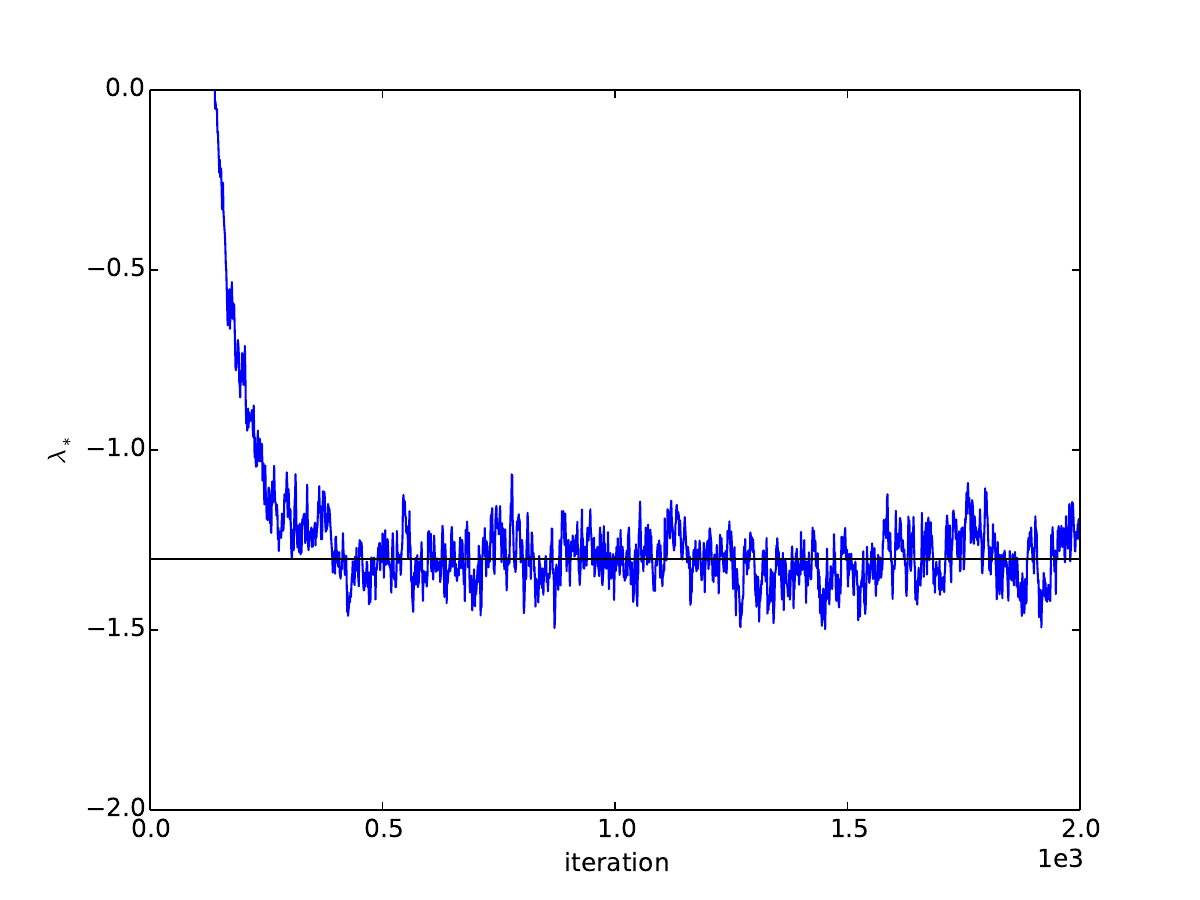}}
\caption{Trajectory  of the approximation, $\Lambda_t^m$ (solid blue line),
of the largest negative eigenvalue of a backwards Kolmogorov operator for the five-particle system as computed by the FRI method with $m=10^6$ over $2\times 10^3$ iterations.  The total dimension of the discretized system is more than $10^{20}$.  The average value of $\Lambda_t^m$ (ignoring the first 500 iterations) is $-1.3$ and is shown by a solid black line.}
\label{PtclsGap40Traj}
\end{figure}

\subsection{A PDE steady state problem}
The adjoint $L^*$ of the operator defined in \eqref{BK} with respect to the standard inner product is called the Fokker--Planck operator.  The operator determines the evolution of the density of the process $X(t)$ defined in \eqref{sde} in the sense that if $\mu$ is that density then
\[
\partial_t \mu = L^*\mu.
\]
An element in the kernel of $L^*$ (a steady state solution of the Fokker--Planck equation) is a density left invariant by $X(t)$.  

For the choice of $b$ and $\sigma$ given in the previous subsection, the steady state solutions are easily seen to be constant multiples of the function 
\[
\mu_*(\vec{x}) = \frac{e^{-V(\vec{x})}}{\int e^{-V(\vec{x})}}.
\]
In most applications, the goal is to compute averages of observables with respect to $\mu_*$.  For example, one might hope to find (up to an additive constant) the effective potential (or free-energy) experienced by particle 1,
\[
F_1(x^{(1)}) = - \log \int \mu_*(\vec{x}) \, dx^{(2)}\cdots dx^{(\ell)}.
\]
For that purpose, explicit knowledge of $\mu_*$ is of little value since one cannot hope to compute integrals with respect to a function of so many variables (up to $101^{8}$ in our tests).  One instead hopes to find a more digestible expression for $\mu_*$.   Notice that if a Fourier expansion
\[
\mu_*(\vec{x}) = \sum\nolimits_{\vec{\alpha}\in \mathbb{Z}^{2\ell}_N} v({\vec{j}})\, e^{i\pi \vec{\alpha}^\ct \vec{x}}
\]
was available then we could compute
\[
\mathcal{F}_1(x^{(1)}) = -\log \sum\nolimits_{\alpha^{(1)}\in \mathbb{Z}^{2}_N} v(\alpha^{(1)},0,\dots,0)\, e^{i\pi \alpha^{(1)T} x^{(1)}}.
\]

As in the previous section\footnote[9]{\jqw{Note that the matrix obtained by $L^2$-projection of the adjoint of a differential operator with real coefficients is the conjugate transpose of the matrix obtained by $L^2$-projection of the differential operator.}}  we discretize the Fokker--Planck operator in a Fourier basis resulting in a finite dimensional linear root finding problem 
\[
\left(K-I\right)v_* = 0
\]
where $K$ is now defined just as in \eqref{cn} but with $A = \hat L^\ct + D.$
  We choose to normalize the solution so that $v(\vec{0}) = 1$ which then results in a linear system 
\[
\left(\bar K - I\right) \bar v_* = r
\]
where $r(\vec{\alpha}) = - K_{\vec{\alpha} \vec{0}}$, $\bar K$ has the row and column corresponding to the index $\vec{0}$ removed, and $\bar v_*$ has the component corresponding to index $\vec{0}$ removed.

\begin{remark}
Note that the linear system $(\bar K-I) \bar v_* = r$ is solved for $v_*$ here  using FRI in conjunction with Jacobi iteration.  With the normalization $v_t(\vec{0})=1,$ this is equivalent to using the power iteration to find the eigenvector corresponding to the largest eigenvalue of $K$ (which is $1$). 
Recalling that here $K \approx I + \varepsilon (\hat L^\ct + D),$ observe that we are (when $\varepsilon$ is small) approximately computing $\lim_{t\rightarrow \infty} \exp(At)v_0$ with  $A = \hat L^\ct + D$, which, since the largest eigenvalue of $A$ is $0$, is the desired eigenvector.
 We repeat that though we know that the dominant eigenvalue of $K$  and we have a formula for $\mu_*,$  the dominant eigenvector of $L^*,$  our goal is to compute projections of $\mu_*$ that cannot be computed by deterministic means. 
 \end{remark}
   
In Figure~\ref{PtclsFE} we present approximations of the function $F_1$ generated by the $\mathcal{O}(1)$ scheme
\begin{align*}
V_{t+1}^m &=\Phi^m_t\left( K V_t^m\right),\\
F_{t+1}^m(\alpha^{(1)}) & = \left(KV_t^m\right)(\alpha^{(1)},0,\dots,0),\\
\overline{F}_{t+1}^m(\alpha^{(1)}) & = (1-\varepsilon_t)\overline{F}_t(\alpha^{(1)}) + \varepsilon_t F_{t+1}^m(\alpha^{(1)}),
\end{align*}
for all $\alpha^{(1)}\in \mathbb{Z}^2_N$ with $\varepsilon_t=(t+1)^{-1}$ and where the independent mappings $\Phi^m_t$ are generated according to Algorithm~\ref{cr}.  The single particle free-energy\footnote[10]{Note that we only approximate $\mathcal{F}_1$ up to the additive constant $-\log\int e^{-V(\vec{x})}.$  In fact, the free energy is typically only defined up to that constant because it is not uniquely specified (one can add a constant to $V$ without changing $\mu_*$).}    as generated by the FRI approach is plotted for $\ell = 2,\,3,\,4$, and $5$  with $m=10,\,200,\,10^4,$ and $10^6$ respectively.  In the two, three, and four particle simulations we use $10^5$ iterations.  Again we choose $N = 51$ and  $h= 10^{-3}$.  The high cost per iteration in the five particle case restricts our simulations to $2\times 10^3$ iterations. 
In the four particle case, for which we have validated the FRI solution by simulations with higher values of $m$ ($m=4\times 10^4$), the free energy profile produced by the TbS approach differs from the FRI result by as much as $100\%$.  We take slight advantage of the particle exchange symmetry and, at each iteration, replace $\left(KV_{t}^m\right)(\alpha^{(1)},0,\dots,0)$ in the above equation for $F_{t+1}^m$ by the average of all $\ell$ components of the form $\left(KV_{t}^m\right)(0,\dots,\alpha^{(k)},0,\dots,0)$.  Note that in the expansion of $\mu_*$, we know that $v(\vec{\alpha})$ is unchanged when we swap the indices $\alpha^{(j)}$ and $\alpha^{(k)}$ corresponding to any two particles.  This fact could be leveraged to greatly reduce the number of basis functions required to accurately represent the solution.  We have not exploited this possibility.

Though it is not accurate, the TbS scheme is substantially more stable on this problem.  We assume that the relative stability of the TbS scheme is a manifestation of the fact that TbS is not actually representing the high wave number modes that are responsible for stability constraints.  Nonetheless, simulating higher dimensional systems would require modifications in our approach.  In particular it might be necessary to identify a small number of components of the solution that should always be resolved (never set to zero).  For example, for this problem one might choose to resolve some number of basis functions for each particle that are independent of the positions of the other particles.

\begin{figure}[!htb]
\centerline{
\includegraphics[height=22em]{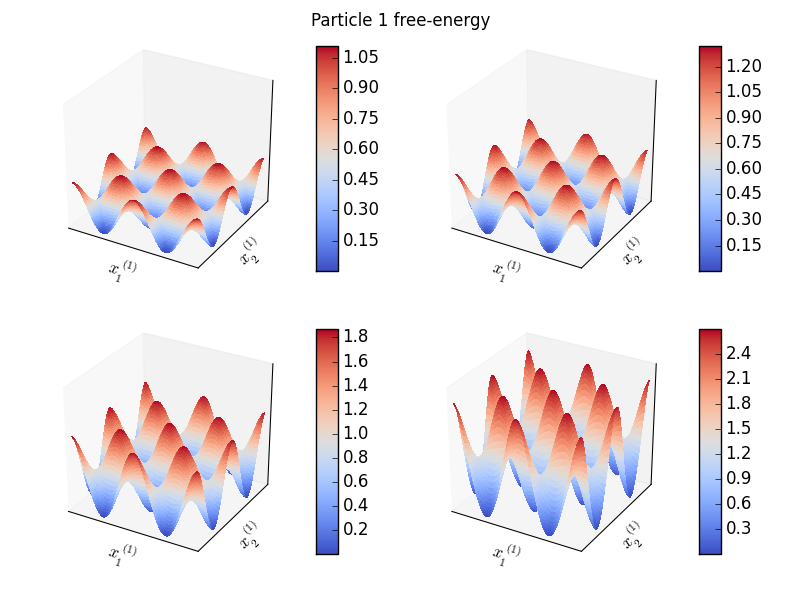}}
\caption{Free energy landscape experienced by a single particle for the (clockwise from top left) two, three, four, and five 2D-particle systems.  The surfaces were generated using the FRI method with $m=10,\,200,\,10^4,$ and $10^6$ respectively and $h=10^{-3}$.  The two-, three-, and four-particle simulations were run for $10^5$ iterations.   The five particle simulation was substantially more expensive per iteration and was run for only $2\times 10^3$ iterations.   The number of Fourier modes used to represent the solution in all simulations is 101 per dimension for a total of more than $10^8,\, 10^{12},\, 10^{16}$, and $10^{20}$ basis functions (half that after taking advantage of the fact that the solution is real).   As expected, the free energy basins deepen as the number of particles grows.  In the four particle case (for which we have high confidence in the estimate produced by FRI) the error from the TbS approach (the results of which we do not plot) is roughly $100\%$ at peaks of the free energy landscape.}
\label{PtclsFE}
\end{figure}

\section{Discussion}
We have introduced a family of fast, randomized iteration schemes for   eigenproblems, linear systems, and matrix exponentiation.  
Traditional iterative methods for
numerical linear algebra were created in part to deal with instances where
the coefficient matrix $A$ (of size $\mathcal{O}(n^2)$) is too big to store but where the operation $x \mapsto Ax$ can nonetheless be carried out. The  iterative
methods in this paper are intended for instances in which the ultimate goal is to compute  $f\cdot x$ for some predetermined vector $f$ but the cost of assembling the product $Ax$ ($\mathcal{O}(n^2)$) is too high and even for cases in which   the solution vector $x$
(of size $\mathcal{O}(n)$) is too big to store.  
We provide basic theoretical results justifying the approach and illustrating in particular that the cost of the schemes can be independent of dimension for some problems.  Generally we expect sublinear scaling with dimension of both cost and storage requirements as observed in our numerical experiments.  The identification of general conditions guaranteeing sublinear scaling for FRI schemes is not addressed in this article but seems a very interesting direction for future research.


A completely deterministic approach to iterative problems related to the methods proposed in this article is the simple thresholding by size (TbS) in which, at each iteration, the smallest entries in the approximation are set to zero.  An adaptive version of the TbS approach has recently been advocated for a wide range of applications (see \cite{SchaefferCaflisch:2013:SparsePDE,OzolinsLai:2013:Sparse2,OzolinsLai:2014:Sparse3}).
 Like TbS our randomized schemes also rely on the enforcement of sparsity and also tend to set small entries in the approximate solution to zero.  While the TbS approach can be effective on some problems with sparse solutions, their error in general will be strongly dependent on system size and we find that it performs very poorly on our test problems relative to FRI.

The core concept behind the FRI schemes introduced in this article is the notion that, by randomly setting entries in vectors to zero, while maintaining a statistical consistency property, we can dramatically reduce the cost and storage of standard iterative schemes.  One can view our FRI schemes as an attempt to blur the line separating Markov chain Monte Carlo (MCMC), which is effective in extremely high dimensional settings but is limited to a relatively narrow class of problems and often does not allow the user to take full advantage of known properties of the solution (e.g.\ smoothness or symmetry properties as in \cite{BoothThom:2009:CIQMC}), and traditional deterministic schemes, which are effective on a very general set of relatively low dimensional problems.  As for MCMC approaches, if one settles for computing low dimensional projections of the full solution then not every element of the state space need be visited and  effective FRI schemes with per iteration cost and storage requirements independent  of system size can be derived (as for MCMC the validity of this statement depends on the particular sequence of problems considered).   Also as for MCMC we expect that, when deterministic alternatives are available, they will outperform our randomized schemes.  For matrices of the size considered in all of our numerical tests, deterministic alternatives are \emph{not} available.


Experience with diffusion Monte Carlo in the context of quantum Monte Carlo simulations suggests that our randomized schemes will be most useful if applied after considerable effort has been expended on finding changes of variables that either make the desired solution as sparse as possible (reducing both bias and variance) or reduce bias by some other means.  In many cases this will mean applying our randomized schemes only after one has obtained an estimate of the solution by some deterministic method applied to a reduced dimensional version of the target problem.

\section*{Acknowledgments}
JQW would like to thank Eric Cances, Tony Lelievre,  and Matthias Rousset, for their hospitality and  helpful discussions during a visit to ENPC that coincided with the early stages of this work.   Both authors would like to thank Mihai Anitescu, Alexandre Chorin, Petros Drineas, Risi Kondor, Jianfeng Lu, Omiros Papaspiliopoulos, Panos Stinis, and the anonymous referees, who all made comments that  strongly affected this article's structure and content. 
LHL's work is generously supported by DARPA D15AP00109, AFOSR FA9550-13-1-0133, NSF IIS-1546413, DMS-1209136, and DMS-1057064. JQW's effort  was supported by the Advance Scientific Computing Research program within the DOE Office of Science through award DE-SC0014205 as well as through a contract from Argonne, a U.S.\ Department of Energy Office of Science laboratory.

\bibliographystyle{plain}
\bibliography{references}

\newpage

\pagenumbering{arabic}

\appendix

\section*{List of proofs}

\begin{proof}[Proof of Lemma \textup{\ref{opnormrep2}}]
First, note that 
\[
\mathbf{E}\left[ \lVert G X \rVert_1^2 \right]
 = \mathbf{E}\Bigl[ \Bigl( \sum\nolimits_{i=1}^n \lVert g_i\rVert_\infty\, \lvert \hat g_i^\ct X \rvert \Bigr)^2\Bigr],
 \]
 where $g_i$ is the $i$th column of $G$ and $\hat g_i = g_i/\lVert g_i\rVert_\infty$.  Using the condition $\lVert G\rVert_{\infty,*}\leq 1$
and Jensen's inequality, we find that
\[
\mathbf{E}\left[ \lVert G X \rVert_1^2 \right]
\leq  \sum\nolimits_{i=1}^n \lVert g_i\rVert_\infty \,
\mathbf{E}\left[ \lvert \hat g_i^\ct X\rvert^2 \right]
\leq \opnorm{X}^2.
\]
The other inequality follows by noting that for any $f\in \mathbb{C}^n$ with $\lVert f\rVert_\infty \leq 1,$ the matrix $G$ with first row equal to $f^\ct$ and all other rows zero satisfies the constraint $\lVert G\rVert_{\infty,*}\leq 1$. That \eqref{eq:inftydual} is the dual norm of the $\infty$-norm follows from straightforward verification, or see \cite[Proposition~7.2]{FL}.
\end{proof}

 \begin{proof}[Proof of Lemma \textup{\ref{opnormlowerbound}}]
 The result holds in $n=1$ dimensions.  Suppose that the result holds in $n-1$ dimensions.  We will show that it must also therefore hold in $n$ dimensions and conclude, by induction, that the result holds in any dimension.

Let $\tilde A$ be the $(n-1)\times (n-1)$ principle submatrix of an $n\times n$ matrix $A.$  For any vector $f\in \mathbb{C}^{n}$ we can write
\begin{align*}
f^\ct  A f &= \sum\nolimits_{i=1}^n \lvert f_i \rvert^2 A_{ii} + 2 \Re \left[\sum\nolimits_{i=1}^n \sum\nolimits_{j=1}^{i-1}  \bar{f_i} A_{ij} f_j\right]\\
& = \tilde{f}^\ct  \tilde A \tilde{f} + \lvert f_n\rvert^2 A_{nn} 
+ 2 \Re \left[ \bar{f_n} \sum\nolimits_{j=1}^{n-1}  A_{nj} \tilde{f}_j\right],
\end{align*}
where $\tilde f\in \mathbb{C}^{n-1}$ has entries equal to the first $n-1$ entries of $f.$

By the induction hypothesis, we can choose the first $n-1$ entries of $f$ (i.e., $\tilde f$)  so that the right-hand side of the last display is not less than 
\[
\sum\nolimits_{i=1}^{n-1} A_{ii} + \lvert f_n\rvert^2 A_{nn} 
+ 2  \Re \left[\bar{f_n} \sum\nolimits_{j=1}^{n-1}  A_{nj} \tilde{f}_j\right].
\]
If, for this choice  of $\tilde f$, $\sum\nolimits_{j=1}^{n-1}  A_{nj} \tilde{f}_j$
is nonzero, then choose $f_n$ as
\[
f_n = \frac{ \sum\nolimits_{j=1}^{n-1}  A_{nj} \tilde{f}_j}{\left\lvert \sum\nolimits_{j=1}^{n-1}  A_{nj} \tilde{f}_j\right\rvert }.
\]
Otherwise set $f_n =1.$  With the resulting choice of $f_n,$
\[
 \lvert f_n\rvert^2 A_{nn} 
+ 2 \Re \left[ \bar{f_n} \sum\nolimits_{j=1}^{n-1}  A_{nj} \tilde{f}_j \right]
 \geq  A_{nn}.
\]
We have therefore shown that
\[
 \sup_{\lVert f\rVert_\infty\leq 1}
 f^\ct  A f \geq \sum\nolimits_{i=1}^n A_{ii}. \qedhere
 \]
 \end{proof}

 \begin{proof}[Proof of Theorem \textup{\ref{stable1}}]
Let $V^m_t$ be generated by \eqref{Rit}. Let $Y_t^m = \Phi^m_t(V_t^m)$ and notice that
\begin{align*}
\mathcal{U}(V_t^m)  &= \mathcal{U}(\mathcal{M}(Y_{t-1}^m))\\
&\leq R  + \alpha\, \mathcal{U}( V_{t-1}^m) + \alpha  \left( \mathcal{U}(Y_{t-1}^m) - \mathcal{U}(V_{t-1}^m)\right).
\end{align*}
Using the fact that $\mathcal{U}$ is twice differentiable with bounded second derivative, this last expression 
is bounded above by
\[
\mathcal{U}(V_t^m)  \leq  R+\alpha\, \mathcal{U}(V_{t-1}^m) + \alpha \nabla \mathcal{U}(V_{t-1}^m) \left( Y_{t-1}^m-V_{t-1}^m\right) + 
\frac{\alpha \sigma}{2} \lVert G\left( Y_{t-1}^m-V_{t-1}^m\right) \rVert_1^2.
\]
Taking the expectation and using \eqref{Cunbiased} yields 
\[
\mathbf{E}\left[\mathcal{U}(V_t^m) \right] \leq  R+\alpha\, \mathbf{E}\left[\mathcal{U}(V_{t-1}^m)\right]  + 
\frac{\alpha \sigma}{2} \mathbf{E}\left[ \lVert G\left( Y_{t-1}^m-V_{t-1}^m\right) \rVert_1^2\right].
\]
An application of Lemma~\ref{opnormrep2}  reveals that
\[
\mathbf{E}\left[\lVert G\left( Y_{t-1}^m-V_{t-1}^m\right) \rVert_1^2\right]
\leq \opnorm{Y_{t-1}^m-V_{t-1}^m}^2.
\]
As a consequence,  noting \eqref{genPhierr}, we arrive at the upper bound
\begin{align*}
\mathbf{E}\left[\mathcal{U}(V_t^m)\right] &\leq R +
\alpha\, \mathbf{E}\left[\mathcal{U}(V_{t-1}^m)\right]  +  
\frac{\alpha \gamma^2 \sigma}{2 m}  \mathbf{E}\left[ \lVert V_{t-1}^m\rVert_1^2\right]\\
& \leq
R + \alpha \left( 1 + \frac{ \beta \gamma^2 \sigma}{2 m}\right) \mathbf{E}\left[ \mathcal{U}(V_{t-1}^m)\right],
\end{align*}
from which we can conclude that
\[
\mathbf{E}\left[ \lVert V_t^m\rVert_1^2\right] \leq
\beta \mathbf{E}\left[\mathcal{U}(V_t^m)\right] \leq  \beta R\Biggl[ \frac{ 1- \alpha^t \bigl( 1 + \frac{ \beta \gamma^2 \sigma}{2 m}\bigr)^t}
{ 1- \alpha \bigl( 1 + \frac{ \beta \gamma^2 \sigma}{2 m}\bigr)}\Biggr]
+ \beta \alpha^t \biggl( 1 + \frac{ \beta \gamma^2 \sigma}{2 m}\biggr)^t \mathcal{U}(V_0^m). \qedhere
\]

\end{proof}

 \begin{proof}[Proof of Theorem \textup{\ref{ft1}}]
 We begin with a standard expansion of the scheme's error.
 \begin{align*}
\opnorm{V_t^m - v_t} &= \opnorm*{ V_t^m -
 \mathcal{M}_0^t(v_0)}\\
 &= \opnorm*{ \sum\nolimits_{r=0}^{t-1} \mathcal{M}_{r+1}^t (V_{r+1}^m) -
 \mathcal{M}_{r}^t(V_{r}^m)}.
 \end{align*}
 Now notice that if we define $Y_r^m = \Phi^m_r(V_r^m),$ then $V_{r+1}^m = \mathcal{M} (Y_r^m)$ and the last equation becomes
 \[
 \opnorm{V_t^m - v_t} =  \opnorm*{ \sum\nolimits_{r=0}^{t-1} \mathcal{M}_{r}^t (Y_{r}) -
 \mathcal{M}_{r}^t(V_{r}^m)}.
 \]
 The right-hand side of the last equation is bounded above by
 \[
 \opnorm*{ \sum\nolimits_{r=0}^{t-1} \mathcal{M}_{r}^t (Y_{r}) -
 \mathbf{E}[ \mathcal{M}_r^t (Y_r)\mid V_r^m]}
 + \sum\nolimits_{r=0}^{t-1} \opnorm{ \mathbf{E}[ \mathcal{M}_r^t (Y_r)\mid V_r^m]-
 \mathcal{M}_{r}^t(V_{r}^m)}.
 \]

 Considering the first term in the last display, note that, for any fixed $f\in \mathbb{C}^n,$
{\footnotesize
 \begin{multline*}
  \mathbf{E}\Bigl[ \bigl\lvert f^\ct \sum\nolimits_{r=0}^{t-1} \bigl(
 \mathcal{M}_{r}^t (Y_{r}) -
 \mathbf{E}[ \mathcal{M}_r^t (Y_r) \mid V_r^m]\bigr) \bigr\rvert^2\Bigr]
= \sum\nolimits_{r=0}^{t-1} \mathbf{E}\Bigl[ \bigl\lvert  f^\ct \bigl(
 \mathcal{M}_{r}^t (Y_{r}) -
 \mathbf{E}[ \mathcal{M}_r^t (Y_r) \mid V_r^m]\bigr) \bigr\rvert^2\Bigr] \\
+ 2 \sum\nolimits_{s=0}^{t-1}\sum\nolimits_{r= s+1}^{t-1}
\Re\Bigl\{ \mathbf{E}\Bigl[ \bigl(  f^\ct (
 \mathcal{M}_{r}^t (Y_{r}) -
 \mathbf{E} [ \mathcal{M}_r^t (Y_r) \mid V_r^m ]) \bigr) \times
\bigl( \overline{ f^\ct (
 \mathcal{M}_{s}^t (Y_{s}) -
 \mathbf{E}[ \mathcal{M}_s^t (Y_s) \mid V_s^m])} \bigr)\Bigr]\Bigr\}.
\end{multline*}}%
Letting $\mathcal{F}_r$ denote the $\sigma$-algebra generated by $\{V_s^m\}_{s=0}^r$ and $\{Y_r^m\}_{s=0}^{r-1},$  for $s<r$ we can write
{\footnotesize
\begin{multline*}
\mathbf{E}\Bigl[ \bigl(  f^\ct (
 \mathcal{M}_{r}^t (Y_{r}) -
 \mathbf{E}[ \mathcal{M}_r^t (Y_r)\mid V_r^m]) \bigr) \times
\bigl( \overline{ f^\ct (
 \mathcal{M}_{s}^t (Y_{s}) -
 \mathbf{E}[ \mathcal{M}_s^t (Y_s)\mid V_s^m])} \bigr)\Bigr]
\\ = \mathbf{E}\Bigl[ \mathbf{E}\bigl[ f^\ct (
 \mathcal{M}_{r}^t (Y_{r}) -
 \mathbf{E} [ \mathcal{M}_r^t (Y_r) \mid V_r^m])\bigm| \mathcal{F}_r\bigr] \times
\bigl(\overline{f^\ct  (
 \mathcal{M}_{s}^t (Y_{s}) -
 \mathbf{E}[ \mathcal{M}_s^t (Y_s) \mid V_s^m])}\bigr)\Bigr].
\end{multline*}}%
Because, conditioned on $V_r^m,$ $Y_r^m$ is independent of $\mathcal{F}_r$, the expression above vanishes exactly.

Supremizing over the choice of $f$, we have shown that
\[
 \opnorm{V_t^m - v_t}
 \leq 
 \Bigl(  \sum\nolimits_{r=0}^{t-1}\opnorm{ \mathcal{M}_{r}^t (Y_{r}) -
 \mathbf{E} [ \mathcal{M}_r^t (Y_r) \mid V_r^m ]}^2\Bigr)^{1/2}
+ \sum\nolimits_{r=0}^{t-1} \opnorm{ \mathbf{E}[\mathcal{M}_r^t (Y_r)\mid V_r^m]-
 \mathcal{M}_{r}^t(V_{r}^m)}.
 \]
 Expanding the term inside of the square root, we find that
 \begin{align*}
  \opnorm{V_t^m - v_t}&\leq
   \left(  \sum\nolimits_{r=0}^{t-1}\left(\opnorm{ \mathcal{M}_{r}^t (Y_{r}) -
  \mathcal{M}_r^t (V_r^m)}
 +\opnorm{\mathbf{E}[ \mathcal{M}_r^t (Y_r) \mid V_r^m ]-   \mathcal{M}_r^t (V_r^m) 
 } \right)^2\right)^{1/2}\\
&\hspace{2cm}+ \sum\nolimits_{r=0}^{t-1} \opnorm{ \mathbf{E}[ \mathcal{M}_r^t (Y_r)\mid V_r^m]-
 \mathcal{M}_{r}^t(V_{r}^m)}\\
 & \leq \Bigl(  \sum\nolimits_{r=0}^{t-1}\opnorm{ \mathcal{M}_{r}^t (Y_{r}) -
  \mathcal{M}_r^t (V_r^m)}
 ^2\Bigr)^{1/2} +
 \Bigl(  \sum\nolimits_{r=0}^{t-1}\opnorm{\mathbf{E}[ \mathcal{M}_r^t (Y_r)\mid V_r^m]-   \mathcal{M}_r^t (V_r^m) 
 }^2\Bigr)^{1/2} \\
&\hspace{2cm}+ \sum\nolimits_{r=0}^{t-1} \opnorm{ \mathbf{E}[ \mathcal{M}_r^t (Y_r)\mid V_r^m]-
 \mathcal{M}_{r}^t(V_{r}^m)},
 \end{align*}
 where, in the second inequality, we have used the triangle inequality for the $\ell^2$-norm in $\mathbb{R}^t.$  
Noting that $\mathbf{E}\left[ A(V_r^m)(Y_r-V_r^m)\mid V_r^m\right] = 0$  yields
 \[
\mathbf{E}[ \mathcal{M}_r^t (Y_r)\mid V_r^m]-
 \mathcal{M}_{r}^t(V_{r}^m) = 
 \mathbf{E}\bigl[ \bigl(\mathcal{M}_r^t-A_r\bigr) (Y_r)\bigm| V_r^m \bigr]
 - \bigl(\mathcal{M}_{r}^t-A_r\bigr)(V_{r}^m).
 \]
 As a consequence, 
 applying our assumptions \eqref{cntr1} and \eqref{cntr2}, we obtain the upper bound
 \[
  \opnorm{V_t^m - v_t} \leq (L_1+L_2) \left(  \sum\nolimits_{r=0}^{t-1}\alpha^{2(t-r)}\opnorm{ \Phi^m_r(V_r^m) -
 V_r^m}
 ^2\right)^{1/2} + L_2\sum\nolimits_{r=0}^{t-1}\alpha^{t-r} \opnorm{ \Phi^m_r(V_r^m)-
 V_{r}^m}^2.
 \]
 Bounding the error from the random compressions, we arrive at the error bound
  \[
  \opnorm{V_t^m - v_t} \leq  \frac{\gamma (L_1+L_2) }{\sqrt{m}}\left(  \sum\nolimits_{r=0}^{t-1}\alpha^{2(t-r)}\mathbf{E}\left[ \lVert V_r^m\rVert_1^2\right]
\right)^{1/2} + \frac{\gamma^2L_2}{m} \sum\nolimits_{r=0}^{t-1}\alpha^{t-r} 
\mathbf{E}\left[ \lVert V_r^m\rVert_1^2\right].   \qedhere
 \]
 
\end{proof}

\begin{proof}[Proof of Corollary \textup{\ref{nonneg}}]
We have already seen that when $\mathcal{M}(v) = Kv$ we can take $\alpha=\lVert K \rVert_1$ in the statement of Theorem \ref{ft1} to verify conditions \eqref{cntr1} and \eqref{cntr2}.  We have also commented above that when $K$ is nonnegative, the quantities $\mathbf{E}\left[ \lVert V_r^m\rVert_1^2\right]$ can be bounded independently of $n.$

When $\mathcal{M}(v) = Kv/ \lVert Kv \rVert_1,$ bounding the size of the  iterates is not an issue, but it becomes slightly more difficult to verify \eqref{cntr1} and \eqref{cntr2}.  That $K$ is aperiodic and irreducible implies that the dominant left and right eigenvectors, $v_L$ and $v_R,$  of $K$ are unique and have all positive entries.  Because power iteration is invariant to scalar multiples of $K$ we can assume that the dominant eigenvalue of $K$ is~1.  We will assume that $v_L$ is normalized so that $\lVert v_L\rVert_\infty=1$ and that $v_R$ is normalized so that $v_L^\tp v_R = 1.$  Let $D$ be the diagonal matrix with $D_{ii} = (v_L)_i$ (i.e., $D \mathbbm{1} = v_L$).  Our matrix $K$ can  be written $K = D^{-1} S D$ where $S$ is an aperiodic, irreducible, column-stochastic matrix.  Let
\[
\widetilde K = K - v_R v_L^\tp = D^{-1} S P D,
\]
where we have defined the projection $P = I - Dv_R \mathbbm{1}^\tp.$  Note that $\lVert P\rVert_1 \leq 2$ and that 
$PSP = SP$ so that for any positive integer $r,$ $\tilde K^r = D^{-1} S^r P D.$ 
Letting 
\[
C = \frac{1}{\min_j \{ (v_L)_j\}}\geq 1
\]
we find that, for any positive integer $r,$
\begin{equation*}
 \lVert \widetilde K^r  \rVert_1
\leq \lVert D^{-1} \rVert_1 \lVert D \rVert_1 \lVert S^r P \rVert_1
\leq 2\, C \sup_{\substack{\lVert v\rVert_1 = 1\\ \mathbbm{1}^\tp v=0}} \lVert S^r v\rVert_1
\leq 2\, C \,
\alpha^r
\end{equation*}
where 
\[
\alpha = \sup_{\substack{\lVert v\rVert_1 = 1\\ \mathbbm{1}^\tp v = 0}} \lVert S v \rVert_1
\] 
Aperiodicity and irreducibility of $S$ implies that $\alpha<1$.
We also have that
\[
\sup_{\substack{ v_L^\text{\tiny T} v = 1 }}\lVert K^r v\rVert_1 \leq C  \qquad \text{and}\qquad \inf_{\substack{ v_L^\text{\tiny T} v = 1 \\ v_j\geq 0\,\forall j}} \lVert K^r v\rVert_1 \geq 1.
\]

Now let $u$ and $v$ be any two non-negative vectors normalized so that $ v_L^\text{\tiny T} u =  v_L^\text{\tiny T} v = 1$ and, for $\theta\in [0,1],$ define $w_\theta = (1-\theta) u + \theta v.$   Note that $w_\theta$ also has non-negative entries and that $v_L^\text{\tiny T} w_\theta = 1.$  For any fixed $f\in \mathbb{R}^n$ with $\lVert f\rVert_\infty \leq 1,$ define the function
\[
{\varphi_r}(u, v; \theta) =  \frac{f^\text{\tiny T} K^r w_\theta}{\lVert K^r w_\theta\rVert_1}  -  \frac{f^\text{\tiny T} K^r u}{\lVert K^r u\rVert_1}.
\]
Our goal is to establish bounds on 
\[
{\varphi_r}(u, v; 1) =  \frac{f^\text{\tiny T} K^r v}{\lVert K^r v\rVert_1} -  \frac{f^\text{\tiny T} K^r u}{\lVert K^r u\rVert_1}.
\]

To that end note that
\[
\frac{d}{d\theta} {\varphi_r}(u, v; \theta) = \frac{f^\text{\tiny T} K^r (v-u)}{\lVert K^r w_\theta\rVert_1}
- \frac{(f^\text{\tiny T} K^r w_\theta)(\mathbbm{1}^\text{\tiny T} K^r (v-u))}{\lVert K^r w_\theta\rVert_1^2}
\]
and
\[
\frac{d^2}{d\theta^2} {\varphi_r}(u, v; \theta) = -2 \frac{(f^\text{\tiny T} K^r (v-u))(\mathbbm{1}^\text{\tiny T} K^r (v-u))}{\lVert K^r w_\theta\rVert_1^2}
+ 2\frac{(f^\text{\tiny T} K^r w_\theta)(\mathbbm{1}^\text{\tiny T} K^r (v-u))^2}{\lVert K^r w_\theta\rVert_1^3}.
\]
Observing that $K^r (v-u) = \tilde K^r (v-u),$ and applying our bounds we find that
\begin{align}
\lvert {\varphi_r}(u, v; 1)\rvert &\leq \max_\theta \left\lvert \frac{d}{d\theta} {\varphi_r}(u, v; \theta)\right\rvert \notag\\
&\leq \lvert f^\text{\tiny T} \tilde K^r (v-u)\rvert + C \lvert \mathbbm{1}^\text{\tiny T} \tilde K^r (v-u)\rvert \notag\\
&\leq 4\, C^2\, \alpha^r\, \lVert G(v-u)\rVert_1\label{stab1check}
\end{align}
where $G\in \mathbb{R}^{n\times n}$ is the matrix with first row equal to 
$ {f^\tp \tilde K^r}/{\lVert 2 f^\tp \tilde K^r \rVert_\infty}$,
second row equal to $ {\mathbbm{1}^\tp \tilde K^r}/{\lVert 2 \mathbbm{1}^\tp \tilde K^r \rVert_\infty}$, 
and all other entries equal to 0.

Defining the matrix valued function
\[
A_r(u) = \frac{1}{\lVert K^r u\rVert_1} \left[ I - \frac{K^r u \mathbbm{1}^\text{\tiny T}}{\lVert K^r u\rVert_1}\right]K^r
\]
we observe that 
\[
\frac{d}{d\theta} {\varphi_r}(u, v; 0) = f^\text{\tiny T} A_r(u) (v-u)
\]
 so that 
\begin{align}
\lvert {\varphi_r}(u, v; 1)  - f^\text{\tiny T} A_r(u) (v-u) \rvert
& \leq \frac{1}{2} \max_\theta \left\lvert \frac{d^2}{d\theta^2} {\varphi_r}(u, v; \theta)\right\rvert \notag\\
& \leq  \lvert f^\text{\tiny T} \tilde K^r (v-u)\rvert  \lvert \mathbbm{1}^\text{\tiny T} \tilde K^r (v-u)\rvert + C \lvert \mathbbm{1}^\text{\tiny T} \tilde K^r (v-u)\rvert^2 \notag\\
& \leq 16\, C^3\, \alpha^{2r}\, \lVert G(v-u)\rVert_1^2
\label{stab2check}
\end{align}

Expressions \eqref{stab1check} and \eqref{stab2check} verify the stability conditions in the statement of Theorem \ref{ft1} with $L_1$ and $L_2$ dependent only on $C$ yielding the first term on the right-hand side of \eqref{nonnegtoterr}.  The second term follows similarly when one observes that \eqref{cntr1} implies 
\[
\sup_{v,\tilde v\in \mathcal{X}}\frac{\lVert \mathcal{M}^r_s(v) -   \mathcal{M}^r_s(\tilde v)\rVert_1}{\lVert v - \tilde v\rVert_1}
\leq L_1 \alpha^{r-s}.
\] 
\end{proof}

\begin{proof}[Proof of Lemma \textup{\ref{pertlemma}}]
If $Y_t^m = \Phi^m_t( V_t^m),$ then
\begin{align*}
\mathbf{E}\left[ \lvert f^\ct  \Phi^m_t(V_t^m) - f^\ct V_t^m \rvert^2 \mid Y_{t-1}^m\right] &= \mathbf{E}\left[ \lvert f^\ct \Phi^m_t\left(Y_{t-1}^m + \varepsilon b(Y_{t-1}^m)\right) - f^\ct \left(Y_{t-1}^m + \varepsilon b(Y_{t-1}^m) \right)\rvert^2 \mid Y_{t-1}^m\right]\\
& \leq \gamma_p \frac{\varepsilon}{m} \lVert b(Y_{t-1}^m)\rVert_1  \lVert V_t^m\rVert_1 
\end{align*}
for some constant $C.$
Our assumed bound on the growth of $b$ along with \eqref{phibnd} implies that
\[
\mathbf{E}\left[ \lVert b(Y_{t-1}^m)\rVert_1^2\right] \leq C' \left(1 + \mathbf{E}\left[ \lVert V_{t-1}^m\rVert_1^2\right]\right)
\]
for some constant $C'.$  From these bounds it follows that for some constant $\tilde \gamma,$
   \[
\opnorm{ \Phi^m_t(V_t^m) - V_t^m}^2 \leq    \tilde \gamma^2  \frac{ \varepsilon }{ m } \sqrt{\mathbf{E}\left[ \lVert V_t^m \rVert_1^2\right]} \sqrt{ 1 + \mathbf{E}\left[ \lVert V_{t-1}^m\rVert_1^2\right]}.  \qedhere
    \]
\end{proof}

\begin{proof}[Proof of Theorem \textup{\ref{ft2}}]
By exactly the same arguments used in the proof of Theorem~\ref{ft1} we arrive at the bound
 \begin{multline*}
  \opnorm{V_t^m - v_t} \leq (L_1+L_2) \left(  \sum\nolimits_{r=0}^{t-1}e^{-2\beta (t-r)\varepsilon}\opnorm{ \Phi^m_r(V_r^m) -
 V_r^m}
 ^2\right)^{1/2} \\
+ L_2\sum\nolimits_{r=0}^{t-1}e^{-\beta (t-r)\varepsilon}  \opnorm{ \Phi^m_r(V_r^m)-
 V_{r}^m}^2.
 \end{multline*}
 Bounding the error from the random compressions, we arrive at the error bound
{\small
  \begin{multline*}
  \opnorm{V_t^m - v_t} \leq  \frac{ \tilde \gamma (L_1+L_2) }{\sqrt{m}}\left(  e^{-2\beta t \varepsilon} \mathbf{E}\left[\lVert V_0^m\rVert_1^2\right] +  \varepsilon \sum\nolimits_{r=1}^{t-1}e^{-2\beta (t-r)\varepsilon}
     \sqrt{\mathbf{E}\left[ \lVert V_r^m \rVert_1^2\right]}\sqrt{  1 + \mathbf{E}\left[ \lVert V_{r-1}^m\rVert_1^2\right]} 
\right)^\frac{1}{2}\\ + \frac{ \tilde \gamma^2  L_2}{m}\sum\nolimits_{r=1}^{t-1}e^{-\beta (t-r)\varepsilon} 
   \sqrt{\mathbf{E}\left[ \lVert V_r^m \rVert_1^2\right]}\sqrt{  1 +  \mathbf{E}\left[ \lVert V_{r-1}^m\rVert_1^2\right]} . \qedhere
 \end{multline*}}
\end{proof}

\begin{proof}[Proof of Theorem \textup{\ref{ft3}}]
By an argument very similar to that in the proof of Theorem~\ref{ft1}, we arrive at the bound
 \begin{align*}
  \opnorm{V_t^m - v_t}& \leq \left(  \sum\nolimits_{r=0}^{t-1}\opnorm{ \mathcal{M}_{r+1}^t (V_r^m + \varepsilon b(Y_r^m)) -
  \mathcal{M}_{r+1}^t (V_r^m + \varepsilon b(V_r^m))}
 ^2\right)^{1/2}\\
 &\hspace{1cm} +
 \left(  \sum\nolimits_{r=0}^{t-1}\opnorm{\mathbf{E}\left[ \mathcal{M}_{r+1}^t (V_r^m + \varepsilon b(Y_r^m)) \mid V_r^m\right]-   \mathcal{M}_r^t (V_r^m) 
 }^2\right)^{1/2} \\
&\hspace{2cm}+ \sum\nolimits_{r=0}^{t-1} \opnorm{ \mathbf{E}\left[ \mathcal{M}_{r+1}^t (V_r^m + \varepsilon b(Y_r^m)) \mid V_r^m\right]-
 \mathcal{M}_{r+1}^t(V_r^m + \varepsilon b(V_r^m))},
 \end{align*}
 which, also as in that proof, is bounded above by
 \begin{multline*}
  \opnorm{V_t^m - v_t} \leq (L_1+L_2) \left( \varepsilon^2  \sum\nolimits_{r=0}^{t-1}\alpha^{2(t-r-1)}\opnorm{ b(Y_r^m) - b(V_r^m)}
 ^2\right)^{1/2}\\
 + L_2 \varepsilon^2 \sum\nolimits_{r=0}^{t-1}\alpha^{t-r} \opnorm{ b(Y_r^m) - b(V_r^m)}^2.
 \end{multline*}
 From \eqref{cntr1eps} and Lemma~\ref{opnormrep2} we find that 
 \[
 \opnorm{ b(Y_r^m) - b(V_r^m)} \leq L_1 \opnorm{ Y_r^m - V_r^m}.
 \]
 The rest of the argument proceeds exactly as in the proof of Theorem~\ref{ft1}.
\end{proof}

 \begin{proof}[Proof of Lemma \textup{\ref{taubnd}}]
  Observe that if $\tau_v^m>0,$ then condition
  \[
  \sum\nolimits_{j=\ell+1}^n  \lvert v_{\sigma_j}\rvert  \leq \frac{ m - \ell}{m} \lVert v\rVert_1
  \]
  holds for $\ell=0.$  Assume that 
    \[
  \sum\nolimits_{j=\ell}^n \lvert v_{\sigma_j}\rvert \leq \frac{ m - \ell+1}{m} \lVert v\rVert_1
  \]
  for some $\ell\leq \tau_v^m.$
 From the definition of $\tau_v^m$ and the fact that $\ell\leq \tau_v^m,$ we must also have that
  \[
 \frac{1}{m-\ell} \sum\nolimits_{j=\ell+1}^n \lvert v_{\sigma_j}\rvert <  \lvert v_{\sigma_{\ell+1}}\rvert.
  \]
Combining the last two inequalities yields
  \[
\sum\nolimits_{j=\ell+1}^n \lvert v_{\sigma_j}\rvert
\leq \frac{m-\ell}{m}   \lVert v\rVert_1.  \qedhere
\]
\end{proof}

\begin{proof}[Proof of Lemma \textup{\ref{dmcerr}}]
First we assume that, for all $j,$ $\lvert v_j+w_j\rvert \leq \lVert v+w\rVert/m.$  We will remove this assumption later.  With this assumption in place, $N_j\in\{0,1\}$ and the $\mathbf{while}$ loop in Algorithm~\ref{cr} is inactive so that
\begin{align*}
f^\ct \Phi_t(v+w) &= \sum\nolimits_{j=1}^n \bar f_j \frac{v_j+w_j}{\lvert v_j + w_j\rvert}  \frac{\lVert v+w\rVert}{m} N_j,\\
\mathbf{E}\bigl[\lvert f^\ct  \Phi_t(v+w) - f^\ct (v+w)\rvert^2\bigr]&= \frac{\lVert v+w\rVert_1^2}{m^2} \mathbf{E}\biggl[\biggl\lvert \sum\nolimits_{j=1}^n \bar f_j \frac{v_j+w_j}{\lvert v_j + w_j\rvert} \biggl( N_j- 
\frac{m\lvert v_j-w_j\rvert}{\lVert v+w\rVert_1}\biggr)\biggr\rvert^2\biggr].
\end{align*}
The random variables in the sum are independent, so the last expression becomes
\begin{align*}
\mathbf{E}\bigl[\lvert f^\ct \Phi_t(v+w) - f^\ct (v+w)\rvert^2 \bigr]
&=  \frac{\lVert v+w\rVert^2_1}{m^2} \sum\nolimits_{j=1}^n \lvert f_j\rvert^2\mathbf{E}\biggl[  \biggl\lvert N_j- 
\frac{m \lvert v_j-w_j\rvert}{\lVert v+w\rVert_1}\biggr\rvert^2\biggr]\\
&\leq \frac{\lVert v+w\rVert^2_1}{m^2} \sum\nolimits_{j=1}^n\mathbf{var}\left[ N_j\right].
\end{align*}
Since $N_j\in \{0,1\},$ the expression for the variance of $N_j$ becomes
\[
\mathbf{var}\left[N_j\right] = \mathbf{E}\left[ N_j\right] \left( 1 - \mathbf{E}\left[ N_j\right]\right)
= \frac{m \lvert v_j+w_j\rvert}{\lVert v+w\rVert_1} \left( 1-  \frac{m \lvert v_j+w_j\vert}{\lVert v+w\rVert_1} \right),
\]
so that
\[
\mathbf{E}\left[\lvert f^\ct \Phi_t(v+w) - f^\ct (v+w)\rvert^2 \right] \leq \frac{\lVert v+w\rVert^2_1}{m^2} \biggl[ m - \biggl(\frac{m}{\lVert v+w\rVert_1}\biggr)^2 \lVert v+w\rVert_2^2\biggr].
\]

Because this scheme does not depend on the ordering of the entries of $v+w$ we can assume that the entries have been ordered so that $v_j = 0$ for $j>m.$  In this case we can write
\[
\lVert v+w\rVert_2^2 = \sum\nolimits_{j=1}^m \lvert v_j+w_j\rvert^2 + \sum\nolimits_{j=m+1}^n \lvert w_j \rvert^2
 \geq \frac{1}{m} \left( \sum\nolimits_{j=1}^m \lvert v_j+w_j\rvert\right)^2,
\]
which then implies that
\begin{align*}
\mathbf{E}\left[\lvert f^\ct \Phi_t(v+w) - f^\ct (v+w)\rvert^2 \right] 
&\leq \frac{\lVert v+w\rVert^2_1}{m} \left( 1 - \frac{1}{\lVert v+w\rVert_1^2}\left( \lVert v+w\rVert_1 -  \sum\nolimits_{j=m+1}^n \lvert w_j\rvert \right)^2 \right)\\
&\leq \frac{2\lVert w\rVert_1 \lVert v+w\rVert_1}{m}. 
\end{align*}
We now remove the assumption that $\lvert v_j+w_j\rvert \leq \lVert v+w\rVert/m.$  Let $\sigma$ be a permutation of the indices of $v+w$ resulting in a vector $v_\sigma + w_\sigma$ with entries of nonincreasing magnitude. Since Algorithm~\ref{cr} preserves the largest $\tau_{v+w}^m$ entries of $v+w$ and the remaining entries, $v_{\sigma_j}+w_{\sigma_j}$ for $j>\tau_{v+w}^m$, satisfy 
\[
\lvert v_{\sigma_j}+w_{\sigma_j}\rvert \leq \frac{1}{m-\tau_{v+w}^m} \sum\nolimits_{k=\tau_{v+w}^m}^n \lvert v_{\sigma_k}+w_{\sigma_k}\rvert,
\]
we can apply the sampling error bound just proved to find that
\begin{equation*}
\opnorm{ \Phi_t(v+w) - v-w} \leq \sqrt{2}
\frac{\bigl( \sum\nolimits_{j=\tau_{v+w}^m+1}^n \lvert w_j\rvert \bigr)^\frac{1}{2}\bigl(\sum\nolimits_{j=\tau_{v+w}^m+1}^n \lvert v_j+w_j\rvert \bigr)^\frac{1}{2}}{\sqrt{m-\tau_{v+w}^m}}.
\end{equation*}
An application of Lemma~\ref{taubnd} then yields \eqref{dmccomperr2}.

%

In bounding the size of $\Phi_t^m(v+w)$ we will again assume that $\tau_{v+w}^m=0$ and 
that the entries have been ordered so that $v_j = 0$ for $j>m.$  The size of the resampled vector can be bounded by first noting that, since the $N_j$ are independent and are  in $\{0,1\},$ 
\begin{align*}
\mathbf{E}\Bigl[ \Bigl( \sum\nolimits_{j=1}^n N_j\Bigr)^2\Bigr] &= \sum\nolimits_{j=1}^n \frac{m\lvert v_j+w_j\rvert}{\lVert v+w\rVert_1}
+ 2 \sum\nolimits_{i=1}^n \sum\nolimits_{j=i+1}^n \frac{m\lvert v_i+w_i\rvert}{\lVert v+w\rVert_1}
\frac{m\lvert v_j+w_j\rvert}{\lVert v+w\rVert_1}\\
& = \sum\nolimits_{j=1}^n \left(\frac{m\lvert v_j+w_j\rvert}{\lVert v+w\rVert_1}\right)^2 
+ 2 \sum\nolimits_{i=1}^n \sum\nolimits_{j=i+1}^n \frac{m\lvert v_i+w_i\rvert}{\lVert v+w\rVert_1}
\frac{m\lvert v_j+w_j\rvert}{\lVert v+w\rVert_1}\\
&\hspace{2cm}+ \sum\nolimits_{j=1}^n \frac{m\lvert v_j+w_j\rvert}{\lVert v+w\rVert_1} - \left(\frac{m\lvert v_j+w_j\rvert}{\lVert v+w\rVert_1}\right)^2 \\
& = m^2 + \sum\nolimits_{j=1}^n \frac{m\lvert v_j+w_j\rvert}{\lVert v+w\rVert_1} - \left(\frac{m\lvert v_j+w_j\rvert}{\lVert v+w\rVert_1}\right)^2.
\end{align*}
Breaking up the last sum in this expression, we find that
\begin{align*}
\sum\nolimits_{j=1}^m \frac{m \lvert v_j+w_j\rvert}{\lVert v+w\rVert_1} -  \left(\frac{m\lvert v_j+w_j\rvert}{\lVert v+w\rVert_1}\right)^2 & \leq  m\sum\nolimits_{j=1}^m \frac{ \lvert v_j+w_j\rvert}{\lVert v+w\rVert_1} -   m \left(\sum\nolimits_{j=1}^m \frac{\lvert v_j + w_j\rvert}{\lVert v+w\rVert_1}\right)^2 \\
& \leq  m\left( 1 - \sum\nolimits_{j=1} \frac{\lvert v_j + w_j\rvert}{\lVert v+w\rVert_1}\right)
 \leq \frac{m\lVert w\rVert_1}{\lVert v+w\rVert_1}
\end{align*}
and that
\[
\sum\nolimits_{j=m+1}^n \frac{m \lvert w_j\rvert}{\lVert v+w\rVert_1} -  \left(\frac{m\lvert w_j\rvert}{\lVert v+w\rVert_1}\right)^2  \leq \frac{m\lVert w\rVert_1}{\lVert v+w\rVert_1},
\]
so that
\[
\mathbf{E}\Bigl[ \Bigl( \sum\nolimits_{j=1}^n N_j\Bigr)^2\Bigr] \leq m^2 + 2\frac{m\lVert w\rVert_1}{\lVert v+w\rVert_1}.
\]
It follows then that (at least when $\tau_{v+w}^m=0$)\[
\mathbf{E}\left[ \lVert \Phi_t^m(v+w)\rVert_1^2\right] \leq \lVert v+w\rVert_1 +  2\frac{\lVert v+w\rVert_1\lVert w\rVert_1}{m}.
\]
Writing the corresponding formula for $\tau_{v+w}^m>0$ and applying Lemma~\ref{taubnd} gives the bound in the statement of the lemma.


Finally we consider the probability of the event $\left\{\Phi_t^m(v+w)=0\right\}.$  If $\tau_{v+w}^m= 0,$ then $N_j\in\{0,1\},$ so that
 $\mathbf{P}\left[ N_j = 0\right] = 1 - m \lvert v_j+w_j\rvert/\lVert v+w\rVert_1$, and, since the $N_j$ are independent,
 \[
 \mathbf{P}\left[ N_j=0\text{ for all }j\right] = 
 \prod_{j=1}^n \left( 1 - \frac{m \lvert v_j+w_j\rvert}{\lVert v+w\rVert_1}\right)
 \leq \prod_{j\leq n, \; v_j\neq 0} \left( 1 - \frac{m \lvert v_j+w_j\rvert}{\lVert v+w\rVert_1}\right).
 \]
 The first product in the last display is easily seen to be bounded above by $e^{-m}.$
 The second product is maximized subject to the constraint
 \[
 \sum\nolimits_{j\leq n, \; v_j\neq 0}  \left( 1 - \frac{m \lvert v_j+w_j\rvert}{\lVert v+w\rVert_1}\right)
 \leq \frac{ m \lVert w\rVert_1}{\lVert v+w\rVert_1}
 \]
 when the terms in the product are all equal, in which case we get
 \[
 \mathbf{P}\left[ N_j=0\text{ for all }j\right] 
 \leq \left( \frac{\lVert w\rVert_1}{\lVert v+w\rVert_1}\right)^m.  \qedhere
 \] 
 \end{proof}

\end{document}